\newcommand{\id}{{\rm id}}
\newcommand{\lra}{\longrightarrow}
\newcommand{\im}{{\rm im}}
\newcommand{\sheafhom}{\mathcal{H}\kern -.5pt om}
\newcommand\sE{{\mathcal E}}
\newcommand\sF{{\mathcal F}}
\newcommand\sI{{\mathcal I}}
\newcommand\sL{{\mathcal L}}
\newcommand\sO{{\mathcal O}}
\newcommand\sS{{\mathcal S}}
\newcommand\sT{{\mathcal T}}
\newcommand\sHom{{\mathcal Hom}}
\newcommand\sExt{{\mathcal Ext}}
\newcommand\sEnd{{\mathcal End}}
\newcounter{lemma}
\newtheorem{lemma1}[lemma]{\setcounter{equation}{0}}
\newenvironment{lemma}{\begin{lemma1}{\bf Lemma.}}{\end{lemma1}}
\newenvironment{theorem}{\begin{lemma1}{\bf Theorem.}}{\end{lemma1}}
\newenvironment{question}{\begin{lemma1}{\bf Question.}}{\end{lemma1}}
\newenvironment{proposition}{\begin{lemma1}{\bf Proposition.}}{\end{lemma1}}
\newenvironment{corollary}{\begin{lemma1}{\bf Corollary.}}{\end{lemma1}}
\newenvironment{remark}{\begin{lemma1}{\bf Remark.}\rm}{\end{lemma1}}
\newenvironment{definition}{\begin{lemma1}{\bf Definition.}}{\end{lemma1}}
\newenvironment{notation}{\begin{lemma1}{\bf Notation.}}{\end{lemma1}}
\newenvironment{Induction Step}{\begin{lemma1}{\bf Induction Step.}}{\end{lemma1}}
\newenvironment{Proof of Theorem 1.2}{\begin{lemma1}{\bf Proof of Theorem 1.2.}}{\end{lemma1}}
\newenvironment{ps-example}{\begin{lemma1}{\bf Pseudo-Example.}\rm}{\end{lemma1}}
\newenvironment{problem}{\begin{lemma1}{\bf Problem.}\rm}{\end{lemma1}}
\begin{document}
\title{Deformations of the tangent bundle of projective manifolds}

\subjclass[2010]{14J30, 14J32, 14J60}
\keywords{tangent bundle, deformation, Calabi-Yau threefold}

\author{Thomas Peternell} 

\address{Thomas Peternell, Mathematisches Institut, Universit\"at Bayreuth, 95440 Bayreuth, 
Germany}
\email{thomas.peternell@uni-bayreuth.de}

\maketitle

\begin{abstract} We investigate when the tangent bundle of a projective manifold has a non-trivial first order
(or positive-dimensional) deformation. This leads to a new conjectural characterization of the complex projective
 space.

\end{abstract} 
\tableofcontents


\section{Introduction} 

In the early 1990's, physicists asked to compute the dimension of the space $H^1(X,T_X \otimes \Omega^1_X)$ for a Calabi-Yau threefold 
$X$; see  \cite{Dis89}, \cite{EH90}.  They computed the dimension for some special complete intersections. A few years later the problem was taken up by D.Huybrechts \cite{Huy95};
he proved in particular that $H^1(X,T_X \otimes \Omega^1_X)  \ne 0$ for three-dimensional Calabi-Yau complete intersections in projective spaces. 
In terms of deformation theory, this space parametrizes first order deformations of the holomorphic tangent bundle $T_X$.

In this paper we propose to systematically study deformations of the tangent bundle $T_X$ of any compact complex manifold $X$, up to deformations of the
form $T_X \otimes \sL$, where $\sL$ is a deformation of the trivial line bundle $\sO_X$. To be precise, we introduce the following notation.

\begin{notation} {\rm  Let $X$ be a compact complex manifold. 
 We say that $T_X$ has a {\it genuine first order deformation} if $T_X$ has a deformation $\sE$ over the double point $D$ which
is not induced by a deformation of $\sO_X$, i.e., $\sE \not \simeq p^*T_X \otimes \sL$ with $\sL$ a deformation of $\sO_X$ over $D$; here $p: X \times D \to X$ denotes the projection. }
\end{notation} 

To have a genuine first oder deformation is equivalent to saying that 
\begin{equation} \label{eq1-1}  h^1(X,T_X \otimes \Omega^1_X)  > q(X) := h^1(X,\sO_X).\end{equation} 
Note that $T_X$ has a non-trivial first order deformation if and only if the morphism $\pi: \mathbb P(T_X) \to X$ has a first order deformation with fixed target $X$
which is not trivial, i.e., not constant. 

In the same way we define genuine deformations of $T_X$ over a positive-dimen\-sional parameter space. The obstructions to lifting a first order deformation in the sense of (1) 
ly in the space 
\begin{equation} \label{eq1a}  H^2(X,T_X \otimes \Omega^1_X)/H^2(X,\sO_X) \simeq H^2(\mathbb P(T_X), T_{\mathbb P(T_X)/X}). \end{equation} 

Thus $T_X$ has a non-obstructed genuine deformation provided
\begin{equation} \label{eq1b} 
h^1(X,T_X\otimes \Omega^1_X) - h^2(X,T_X \otimes \Omega^1_X) - q(X) + h^2(X,\sO_X) > 0. \end{equation}

\begin{remark}  {\rm   Let ${\sEnd}_0(T_X)$ denote the sheaf of traceless endomorphisms of $T_X$, hence
$$ T_X \otimes \Omega^1_X \simeq {\sEnd}_0(T_X) \oplus \sO_X$$ and a fortiori
$$ H^q(X,T_X \otimes \Omega^1_X) = H^q(X,{\sEnd}(T_X)) =  H^q(X,{\sEnd}_0(T_X))  \oplus H^q(X, \sO_X))$$
for all $q$.
Then Equation (\ref{eq1-1}) is equivalent to 
$$ h^1(X,{\sEnd}_0(T_X)) \ne 0.$$}
whereas Equation (\ref{eq1b}) is equivalent to 
$$ h^1(X,{\sEnd}_0(T_X)) - h^2(X, {\sEnd}_0(T_X)) > 0.$$ 
\end{remark}

The theme of this paper is now the following 

\begin{problem} Let $X$ be a projective (compact K\"ahler, or simply compact) manifold.  Give necessary and sufficient conditions such that  $T_X$  has a genuine first-order deformation
resp. a genuine non-obstructed deformation. 
\end{problem}

In dimension $2$ we show 

\begin{theorem} Let $X$ be a compact K\"ahler surface. Then $T_X$ does not have a genuine first order deformation if 
and only if either $X \simeq \mathbb P_2$ or if $X$ is a ball quotient (so $c_1^2(X) = 3c_2(X)$) with $H^0(X,S^2\Omega^1_X) = 0$. 
\end{theorem}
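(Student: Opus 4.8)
The plan is to reformulate the statement entirely in terms of the cohomology group $H^1(X,{\sEnd}_0(T_X))$, which by the Remark is nonzero precisely when $T_X$ admits a genuine first order deformation. Thus I must characterize the compact K\"ahler surfaces with $h^1(X,{\sEnd}_0(T_X)) = 0$. The first step is a Riemann--Roch computation. For the rank two bundle $T_X$ one has $c_1(T_X) = c_1(X)$, $c_2(T_X) = c_2(X)$ and $\chi({\sEnd}\,T_X) = 4\chi(\sO_X) + c_1^2(X) - 4c_2(X)$; subtracting $\chi(\sO_X)$ and simplifying yields
\[ \chi(X,{\sEnd}_0(T_X)) = \tfrac{5}{4}\bigl(c_1^2(X) - 3c_2(X)\bigr). \]
The second preliminary step is to identify the top cohomology: since for a rank two bundle ${\sEnd}_0(T_X) \cong S^2\Omega^1_X \otimes K_X^{-1}$ and ${\sEnd}_0(T_X)$ is self-dual under the trace pairing, Serre duality gives $h^2(X,{\sEnd}_0(T_X)) = h^0(X,{\sEnd}_0(T_X)\otimes K_X) = h^0(X,S^2\Omega^1_X)$. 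This already accounts for the role of $H^0(X,S^2\Omega^1_X)$ in the statement.

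Combining these, if $h^1 = 0$ then $\chi(X,{\sEnd}_0(T_X)) = h^0 + h^2 \ge 0$, so $c_1^2(X) \ge 3c_2(X)$. The next step is therefore to list, via the Enriques--Kodaira classification, the K\"ahler surfaces satisfying $c_1^2 \ge 3c_2$. Blowing up a point changes $c_1^2 - 3c_2$ by $-4$, which forces strong control on the number of blow-ups and favours minimal models. For $\kappa(X) = 2$ the Bogomolov--Miyaoka--Yau inequality gives $c_1^2 \le 3c_2$, so equality must hold and $X$ is a ball quotient. For $\kappa(X) = -\infty$ the candidates are $\mathbb{P}_2$ (where $c_1^2 - 3c_2 = 0$) together with ruled surfaces over a base of genus $\ge 1$ and their blow-ups, while for $\kappa(X)\in\{0,1\}$ the minimal candidates (all with $c_1^2 = 3c_2 = 0$) are the abelian, bielliptic, and elliptic surfaces with $c_2 = 0$.

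It remains to decide $h^1 = 0$ in each class. For $\mathbb{P}_2$ the bundle $T_X$ is stable, so $h^0({\sEnd}_0 T_X) = 0$, and $h^0(S^2\Omega^1_{\mathbb{P}_2}) = 0$, whence $h^1 = 0$. For a ball quotient $T_X$ is polystable for the K\"ahler--Einstein metric, hence simple, so $h^0({\sEnd}_0 T_X) = 0$; since $\chi = 0$ this forces $h^1 = h^2 = h^0(S^2\Omega^1_X)$, giving exactly $h^1 = 0 \iff H^0(X,S^2\Omega^1_X) = 0$. For the remaining minimal surfaces I exhibit a nonzero class in $H^1$: abelian surfaces have ${\sEnd}_0 T_X \cong \sO_X^{\oplus 3}$, so $h^1 = 3\,q(X) > 0$; bielliptic surfaces and the elliptic $c_2 = 0$ surfaces have a finite \'etale cover by a product of elliptic curves and hence a (virtually) flat tangent bundle, which one uses to produce either a global traceless endomorphism or a symmetric differential, forcing $h^1 > 0$ since $\chi = 0$.

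The main obstacle will be the ruled surfaces over curves of genus $\ge 1$ (and their blow-ups), where $c_1^2 - 3c_2$ can be large and positive and $T_X$ is \emph{unstable}, so neither $h^0({\sEnd}_0 T_X)$ nor $h^2$ vanishes automatically and $\chi > 0$ gives no information about $h^1$. Here I would work directly with the relative tangent sequence $0 \to T_{X/C} \to T_X \to \pi^*T_C \to 0$ of the ruling $\pi\colon X \to C$, compute $H^1(X,{\sEnd}_0 T_X)$ from it, and show that it is nonzero, thereby producing a genuine deformation; the blow-ups are then handled by comparing with the minimal model along the exceptional curves. Controlling these bundle cohomology groups uniformly over all ruled surfaces is the technical heart of the argument.
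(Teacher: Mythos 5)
Your overall strategy is the same as the paper's: Riemann--Roch gives $\chi(X,\sEnd_0(T_X))=\frac{5}{4}(c_1^2(X)-3c_2(X))$ and Serre duality gives $h^2(X,\sEnd_0(T_X))=h^0(X,S^2\Omega^1_X)$ (both computations are correct and agree with Proposition \ref{prop:RR2}), so $c_1^2<3c_2$ forces $h^1\ne 0$, and the remaining surfaces are listed by classification and treated case by case. Your handling of $\mathbb P_2$, ball quotients, tori/bielliptic surfaces and the reduction of most blow-ups to the inequality $c_1^2-3c_2<0$ is sound. However, there is a genuine gap: the ruled surfaces over a curve $B$ of genus $g\ge 1$ (and their blow-ups) are exactly the case where $\chi(\sEnd_0(T_X))=5(g-1)\ge 0$ gives no information, and you explicitly defer this case to a plan (``work directly with the relative tangent sequence and show $H^1$ is nonzero''). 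This is the only case in the whole theorem that requires an actual argument beyond numerology and stability, so leaving it as a plan leaves the theorem unproved. The paper closes it by a concrete estimate on $h^0$ rather than $h^1$: writing $X=\mathbb P(\sE)$, one has $T_{X/B}\otimes\pi^*\Omega^1_B\subset T_X\otimes\Omega^1_X$ with $\pi_*(T_{X/B}\otimes\pi^*\Omega^1_B)=S^2\sE\otimes\det\sE^*\otimes K_B$, a rank-three bundle of degree $6(g-1)$, so Riemann--Roch gives $h^0(X,T_X\otimes\Omega^1_X)>3(g-1)$ (the strict inequality coming from the identity endomorphism, which does not lie in the relative part); combined with $\chi(T_X\otimes\Omega^1_X)=4(g-1)$ and $h^2(T_X\otimes\Omega^1_X)\ge h^0(B,2K_B)$ this yields $h^1(X,T_X\otimes\Omega^1_X)>g=q(X)$. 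Some version of this $h^0$-estimate is what your sketch would have to produce.

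A second, smaller error: properly elliptic surfaces with $c_2=0$ are \emph{not} finite \'etale quotients of products of elliptic curves (the base orbifold is hyperbolic since $\kappa=1$), so the mechanism you invoke for them fails as stated. The conclusion is still correct and can be reached more cheaply: if $h^1(\sEnd_0(T_X))=0$ then, since $\chi(\sEnd_0(T_X))=0$, both $h^0(\sEnd_0(T_X))$ and $h^0(S^2\Omega^1_X)$ vanish; the latter forces $q(X)=0$ (a nonzero $1$-form would square to a symmetric differential), hence $\chi(X,\sO_X)\ge 1$, contradicting Noether's formula $\chi(\sO_X)=\frac{1}{12}(c_1^2+c_2)=0$. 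This is the argument the paper uses; you should replace the \'etale-cover claim by it (or by exhibiting the symmetric differentials $f^*(2K_B)\subset S^2\Omega^1_X$ coming from the Iitaka fibration).
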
 

The case that $X$ is of general type has already been shown in \cite{CWY03}. 
It would certainly be interesting to classify the two-dimensional ball quotients $X$ with  $H^0(X,S^2\Omega^1_X) = 0$. 

In dimension larger than one however, things get much more complicated, even in dimension $3$. Of course, the tangent bundle of the complex projective space is rigid in any dimension. 
In dimension $3$ we prove - among other things -

\begin{theorem} Let $X$ be a three-dimensional compact complex manifold. 
Then $T_X$ has a genuine first order deformation provided one of the following conditions is satisfied. 
\begin{enumerate} 
\item $X$ is a Fano threefold, different from $\mathbb P_3$.
\item Suppose $X$ is rationally connected and that $c_1(X)^3 \leq 63$.
\item $X$ is the blow-up of the smooth threefold $Y$ in a point or a smooth curve $C$. Suppose that  $h^0(Y,T_Y  \otimes \Omega^1_Y) = 1$, e.g., $T_Y$ is stable for some polarization, and in case of
the blow-up of $C$, additionally that $-K_Y \cdot C \geq 2$.
\item $X$ is a $\mathbb P_2$-bundle over a smooth curve. 
\item $X = \mathbb P(\sF) $ with a semi-stable rank two bundle over a smooth K\"ahler surface $S$ with $H^1(S,\sO_S) = 0$.
\item $X$ is a Calabi-Yau threefold and $\varphi: X \to Y$ the contraction of an irreducible divisor $E$ to a point or a curve with $Y$ projective
or $\varphi: X \to Y$ a small contraction, with a few exceptions, given in Theorem \ref{cont2} and Theorem \ref{thm:ruled}. 
\end{enumerate}
\end{theorem}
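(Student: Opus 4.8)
The common criterion is that, by the Remark, $T_X$ has a genuine first order deformation if and only if $h^1(X,{\sEnd}_0(T_X)) > 0$, where ${\sEnd}_0(T_X)$ is the traceless summand of $T_X \otimes \Omega^1_X$. The plan is to establish this inequality in each of the six cases, the mechanism being different according to whether Riemann--Roch already forces it or whether one must produce a class geometrically. For (1) and (2) I would start from a Riemann--Roch computation. Since ${\rm ch}_1({\sEnd}_0 T_X) = 0$ and ${\rm ch}_3({\sEnd}(T_X)) = 0$ (the odd Chern characters of the self-dual bundle ${\sEnd}(T_X)$ vanish), a short computation from ${\rm ch}({\sEnd} T_X) = {\rm ch}(T_X)\,{\rm ch}(\Omega^1_X)$ against the Todd class gives
\[ \chi(X,{\sEnd}_0(T_X)) = c_1^3 - \tfrac{8}{3}\, c_1 c_2, \qquad c_i := c_i(T_X). \]
If $X$ is rationally connected then $H^i(X,\sO_X) = 0$ for $i>0$, so $q(X)=h^2(\sO_X)=0$ and $c_1 c_2 = 24\,\chi(\sO_X) = 24$, whence
\[ \chi(X,{\sEnd}_0(T_X)) = c_1^3 - 64, \]
which under the hypothesis $c_1^3 \le 63$ of (2) is strictly negative. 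To pass from $\chi<0$ to $h^1>0$ I would show $h^0(X,{\sEnd}_0 T_X) = h^3(X,{\sEnd}_0 T_X) = 0$; here $h^3 = h^0({\sEnd}_0 T_X \otimes K_X) = 0$ (Serre duality and self-duality of ${\sEnd}_0$) follows from semistability of $T_X$ together with anti-effectivity of $K_X$, with the residual low-degree non-semistable threefolds treated directly. Case (1) then reduces to (2): a Fano threefold is rationally connected, and $(-K_X)^3 = c_1^3 \le 64$ with equality only for $\mathbb P_3$, so $X \ne \mathbb P_3$ gives $c_1^3 \le 63$; note $\mathbb P_3$ itself has $c_1^3=64$, consistent with $\chi = 0$ and rigidity.

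For the fibration cases (4) and (5) the engine is the relative tangent sequence $0 \to T_{X/B} \to T_X \to \pi^* T_B \to 0$ (with $B$ the base curve, resp.\ the surface $S$), which filters ${\sEnd}_0(T_X)$ by its relative and horizontal pieces together with the two cross terms $\Omega^1_{X/B}\otimes\pi^*T_B$ and $T_{X/B}\otimes\pi^*\Omega^1_B$. The key step is to compute the $R^q\pi_*$ of these terms from the fibrewise cohomology of $T_{\mathbb P_2}$ resp.\ $T_{\mathbb P_1}$; in particular $\pi_* T_{X/S} \simeq {\sEnd}_0(\sF)$ for a $\mathbb P_1$-bundle and $R^1\pi_*\Omega^1_{X/B} \simeq \sO_B$. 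Running the Leray spectral sequence, I would then exhibit a nonzero contribution to $H^1(X,{\sEnd}_0 T_X)$: over a curve it comes from $H^0(B,T_B)$ or from Serre duality on $B$, whereas for (5) it comes from $H^1(S,\Omega^1_S \otimes {\sEnd}_0 \sF)$, where semistability of $\sF$ (hence of ${\sEnd}_0\sF$) together with Riemann--Roch and the Bogomolov inequality on $S$ forces the class to survive. The hypothesis $H^1(S,\sO_S)=0$ guarantees $q(X)=0$, so that the comparison with $q(X)$ in \eqref{eq1-1} is clean.

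For the blow-up (3) I would compare cohomology on $X$ with that on $Y$ through the two standard sequences $0 \to \sigma^*\Omega^1_Y \to \Omega^1_X \to \Omega^1_{X/Y} \to 0$ and the dual tangent sequence, where $\sigma:X\to Y$ and $\Omega^1_{X/Y}$ is supported on the exceptional divisor $E$. Pushing ${\sEnd}(T_X)$ forward by $\sigma$ and using $R\sigma_*\sO_X = \sO_Y$, the hypothesis $h^0(Y,T_Y\otimes\Omega^1_Y)=1$ (that is, $T_Y$ simple) removes the trivial endomorphisms, while the exceptional contribution — controlled by the normal bundle of $E$, whence the positivity requirement $-K_Y\cdot C \ge 2$ in the case of a curve centre — produces the surviving class. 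As $q(X)=q(Y)$, the inequality $h^1(X,{\sEnd}_0 T_X)>0$ follows.

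The Calabi--Yau case (6) is where I expect the genuine difficulty, and it is of a different nature. Here $c_1 = 0$, so the formula above gives $\chi(X,{\sEnd}_0 T_X) = 0$ (as it must, by self-duality and Serre duality), and no class can be extracted from an Euler-characteristic count. One must instead build an element of $H^1(X,{\sEnd}_0 T_X)$ directly from the contraction $\varphi:X\to Y$, reading the deformation off the exceptional divisor $E$ (or the flopping curves in the small case) through its conormal sheaf and the relative differentials; this is carried out in Theorems \ref{cont2} and \ref{thm:ruled}, the listed exceptions being precisely the degenerate normal-bundle configurations in which the natural class vanishes. Thus the main obstacle is the Calabi--Yau case, together with the clean control of $h^3(X,{\sEnd}_0 T_X)$ throughout the rationally connected range.
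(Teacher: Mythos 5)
Your overall architecture --- Riemann--Roch plus a vanishing of $h^3$ in the rationally connected cases, relative tangent sequences for the bundle cases, push-forward for blow-ups, and a direct geometric construction for Calabi--Yau threefolds --- is indeed the paper's architecture, and your formula $\chi(X,\sEnd_0(T_X)) = c_1^3 - \tfrac{8}{3}c_1c_2$ is correct. But there are genuine gaps. In cases (1)--(2) you propose to pass from $\chi<0$ to $h^1>0$ by proving $h^0(X,\sEnd_0(T_X)) = h^3(X,\sEnd_0(T_X)) = 0$. The first vanishing is false in general for rationally connected threefolds (already $\mathbb P_1\times\mathbb P_2$ has $h^0(X,\sEnd_0(T_X))\ge 1$); fortunately it is also unnecessary, since $h^3=0$ and $\chi<0$ give $h^1 = h^0+h^2-\chi \ge -\chi > 0$. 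More seriously, your mechanism for $h^3=0$ --- semistability of $T_X$ plus a case analysis of the ``residual non-semistable threefolds'' --- does not go through: $T_X$ need not be semistable for a rationally connected threefold, and the non-semistable ones do not form a short classifiable list. The paper instead uses Serre duality and self-duality to identify $H^3(X,T_X\otimes\Omega^1_X)$ with $H^0(X,\Omega^1_X\otimes\Omega^2_X)$ (up to dualizing), which vanishes because a rationally connected manifold carries no nonzero covariant holomorphic tensors; this disposes of $h^3$ uniformly. Similarly, in (4) and (5) your Leray computations only produce a nonzero $H^1$ of a graded piece; the actual content of Theorem \ref{P1} and Proposition \ref{prop:bundle} is that this class injects into $H^1(X,T_X\otimes\Omega^1_X)$ --- in (5) via $h^0(X,T_X\otimes K_{X/S})=1$ and the nonvanishing of the induced map on $H^0$, in (4) via the surjectivity of $H^0(X,T_X\otimes\Omega^1_X)\to H^0(X,\pi^*(T_C)\otimes\Omega^1_X)$ together with $H^2(X,T_{X/C}\otimes\Omega^1_X)=0$. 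Asserting that the class ``survives'' is exactly the step that needs proof.

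Case (6) is not proved: deferring to Theorems \ref{cont2} and \ref{thm:ruled} conceals the bulk of the argument, which is not a formal normal-bundle computation. The paper needs the $H$-stability of $\Omega^{[1]}_Y$ (Lemma \ref{lem:stability}) to establish $h^0(X,\sO_X(E)\otimes\Omega^1_X\otimes T_X)=1$ and hence Lemma \ref{jun28}; Laufer's trichotomy for contractible rational curves together with a two-sided computation of $h^2(Y,(T_Y\otimes\Omega^{[1]}_Y)^{**})$ at the hypersurface singularity for small contractions (Proposition \ref{prop:small}); and the Gross--Wilson deformation results to smooth or degenerate the exceptional divisor in the remaining divisorial cases. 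Finally, in case (3) the hypotheses are used differently from what you describe: for a point centre or a rational curve the class comes from the logarithmic residue sequence (Theorem \ref{thm:gendiv}), while the inequality $-K_Y\cdot C\ge 2$ enters only through the Riemann--Roch estimate $\chi(C,N_C\otimes\Omega^1_Y\vert C)\ge 2$ in Theorem \ref{june20}, which guarantees $h^0(Y,Q)\ne 0$ for the quotient sheaf $Q$ supported on $C$, not through any positivity of the normal bundle of the exceptional divisor.
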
 

It is also easy to see that the tangent bundles of smooth hypersurfaces $X \subset \mathbb P_{n+1}$ of degree $d \geq 2$ and dimension $n \geq 2$ 
have genuine first order deformations. If $X$ is a ball quotient of dimension at least three, then Siu \cite{Siu91} has shown that $T_X$ has no genuine deformation.
Further, if $X $ is a product $X = X_1 \times X_2$ with $X_j$ ball quotients with $q(X_j) = 0$,
then $T_X$ again has no genuine first order deformation. This procedure can also be
iterated.

All these considerations lead to the following 

\begin{question} \label{Q} Suppose $X$ is a compact K\"ahler manifold. Then $T_X$ has no genuine first order deformation if and only if $X$ is one of the following. 
\begin{itemize}
\item $X \simeq \mathbb P_n$ 
\item $X$ is a ball quotient of dimension at least three
\item $X$ is a twodimensional ball quotient such that $H^1(X,\sEnd_0(T_X))  = 0 $
\item $X$ is a product (with at least two factors) 
$$ X = \Pi_j X_j$$
with $X_j$ 
ball quotients such that $q(X_j) = 0$ and $H^1(X_j, \sEnd_0(T_{X_j})) = 0 $ for all $j$. 

\end{itemize} 
\end{question}

{\bf Acknowledgements} Many thanks go to Ulrike Peternell for carefully reading the first four parts of the paper and for suggesting many improvements.
Also I would like to thank Alan Huckleberry and Stefan Nemirovski for very helpful discussions and Jun-Muk Hwang for pointing out the references \cite{CWY03} and \cite{Siu91}. 
Finally, I would like to thank the referee for pointing out an error in section 5 and many other very valuable suggestions.

\section{Some general results}
\setcounter{lemma}{0}

In this section we prove some general results with a focus on blow-ups.

\begin{proposition} \label{prop:bir0}
Let $X$ be a compact complex manifold. Let $E \subset X$ be a smooth irreducible divisor. Assume that 
\begin{enumerate}
\item $H^0(E, T_X \vert E) \ne 0.$
\item $H^0(X,\Omega^1_X \otimes T_X) = H^0(X, \Omega^1_X(\log  E) \otimes T_X), $
\end{enumerate} 
Then $H^1(X,\Omega^1_X \otimes T_X) \ne 0. $ 
\end{proposition}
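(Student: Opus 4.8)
The plan is to exploit the logarithmic comparison between $\Omega^1_X$ and $\Omega^1_X(\log E)$, which sit in the residue sequence
\begin{equation}
0 \lra \Omega^1_X \lra \Omega^1_X(\log E) \lra \sO_E \lra 0,
\end{equation}
where the last map is the Poincar\'e residue. Tensoring with the locally free sheaf $T_X$ gives
\begin{equation}
0 \lra \Omega^1_X \otimes T_X \lra \Omega^1_X(\log E) \otimes T_X \lra T_X\vert E \lra 0.
\end{equation}
The long exact cohomology sequence then reads
\begin{equation}
0 \lra H^0(X,\Omega^1_X \otimes T_X) \lra H^0(X,\Omega^1_X(\log E)\otimes T_X) \lra H^0(E, T_X\vert E) \stackrel{\delta}{\lra} H^1(X,\Omega^1_X \otimes T_X) \lra \cdots
\end{equation}

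\textbf{Key steps.} First I would verify that the residue sequence tensored with $T_X$ restricts the cokernel to $T_X\vert E$, i.e.\ that $\sO_E \otimes T_X \simeq T_X\vert E$; this is immediate since $\sO_E$ is the structure sheaf of $E$ viewed as a sheaf on $X$ and $T_X$ is locally free. Next, hypothesis (2) says precisely that the first map in the above long exact sequence is an isomorphism, so the restriction map $H^0(X,\Omega^1_X(\log E)\otimes T_X) \to H^0(E,T_X\vert E)$ is the zero map. By exactness, the connecting homomorphism $\delta : H^0(E,T_X\vert E) \to H^1(X,\Omega^1_X\otimes T_X)$ is therefore injective. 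Finally, hypothesis (1) guarantees $H^0(E,T_X\vert E)\ne 0$, so its injective image in $H^1(X,\Omega^1_X\otimes T_X)$ is nonzero, giving the claim.

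\textbf{Main obstacle.} The only genuinely delicate point is the identification of hypothesis (2) with the exactness statement that forces $\delta$ to be injective: one must be careful that the inclusion $\Omega^1_X \hookrightarrow \Omega^1_X(\log E)$ induces exactly the map whose image on $H^0$ is controlled by (2), and that the two $H^0$-spaces in (2) are compared via this same natural inclusion rather than some other identification. Once that bookkeeping is confirmed, the argument is a formal diagram chase; there is no hard geometry or vanishing theorem required, since both nonvanishing and the isomorphism of global sections are supplied as hypotheses. I do not expect any difficulty beyond checking that the maps in the residue sequence are the standard ones and that tensoring by the locally free sheaf $T_X$ preserves exactness.
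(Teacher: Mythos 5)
Your argument is exactly the paper's proof: tensor the residue sequence $0 \to \Omega^1_X \to \Omega^1_X(\log E) \to \sO_E \to 0$ with $T_X$, use hypothesis (2) to see that the connecting map $H^0(E,T_X\vert E) \to H^1(X,\Omega^1_X\otimes T_X)$ is injective, and conclude from hypothesis (1). The bookkeeping you flag is indeed the only point to check, and it works out as you describe.
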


\begin{proof} The residue sequence tensorized with $T_X$ reads 
$$ 0  \to \Omega^1_X \otimes T_X \to \Omega^1_X(\log E) \otimes T_X \to T_X \vert E \to 0. $$
Taking cohomology and using our assumption (2) we obtain an injection 
$$ H^0(E,T_X \vert E) \to H^1(X,T_X \otimes \Omega^1_X) $$
Hence assumption (1) gives the claim.
\end{proof} 

In the same way, we have 

\begin{proposition} \label{prop:bir0A}
Let $X$ be a compact complex manifold. Let $E \subset X$ be a smooth irreducible divisor. Assume that 
\begin{enumerate}
\item $H^0(E, T_X \vert E) \ne 0.$
\item $h^0(X,\sEnd_0(T_X)) = h^0(X, (\Omega^1_X(\log  E) \otimes T_X) /\sO_X)$.
\end{enumerate} 
Then $H^1(X, {\sEnd}_0(T_X) \ne 0$.
\end{proposition}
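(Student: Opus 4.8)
The plan is to mimic the proof of Proposition \ref{prop:bir0}, but to run the entire argument after dividing out the trace part. Recall from the Remark that the identity endomorphism spans a trivial subsheaf $\sO_X \hookrightarrow \Omega^1_X \otimes T_X = \sEnd(T_X)$ which splits off as a direct summand (via $\tfrac1n \mathrm{tr}$), with complementary summand $\sEnd_0(T_X)$; in particular $(\Omega^1_X \otimes T_X)/\sO_X \simeq \sEnd_0(T_X)$. First I would check that $\id$ stays a nowhere-vanishing section of the larger bundle $\Omega^1_X(\log E) \otimes T_X$, so that $\sO_X$ is a subbundle there too. In local coordinates $z_1,\dots,z_n$ with $E = \{z_1 = 0\}$ one has
$$ \id = \textstyle\sum_i dz_i \otimes \partial_{z_i} = z_1 \,\tfrac{dz_1}{z_1} \otimes \partial_{z_1} + \sum_{i\geq 2} dz_i \otimes \partial_{z_i}, $$
which restricts to $\sum_{i\geq 2} dz_i \otimes \partial_{z_i} \neq 0$ along $E$.

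Next I would push the residue sequence tensorized with $T_X$,
$$ 0 \to \Omega^1_X \otimes T_X \to \Omega^1_X(\log E) \otimes T_X \xrightarrow{\ \mathrm{res}\ } T_X\vert E \to 0, $$
through the quotient by $\sO_X$. Since $\sO_X \subset \Omega^1_X \otimes T_X = \ker(\mathrm{res})$ (the same local computation gives $\mathrm{res}(\id) = z_1 \partial_{z_1}\vert E = 0$), the map $\mathrm{res}$ descends to the quotient, its kernel becomes $(\Omega^1_X \otimes T_X)/\sO_X = \sEnd_0(T_X)$, and surjectivity is preserved. This yields the short exact sequence
$$ 0 \to \sEnd_0(T_X) \to (\Omega^1_X(\log E) \otimes T_X)/\sO_X \xrightarrow{\ \overline{\mathrm{res}}\ } T_X\vert E \to 0. $$

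Finally I would take cohomology and argue exactly as in Proposition \ref{prop:bir0}. The long exact sequence begins
$$ 0 \to H^0(\sEnd_0(T_X)) \to H^0\big((\Omega^1_X(\log E) \otimes T_X)/\sO_X\big) \to H^0(T_X\vert E) \xrightarrow{\ \delta\ } H^1(\sEnd_0(T_X)). $$
The left-hand injection together with the equality of dimensions in assumption (2) forces it to be an isomorphism, so the following map vanishes and the connecting homomorphism $\delta$ is injective. Assumption (1), $H^0(E,T_X\vert E) \neq 0$, then gives $H^1(X,\sEnd_0(T_X)) \neq 0$.

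The only genuinely new point relative to Proposition \ref{prop:bir0} is the reduction of the residue sequence modulo the trace: verifying that dividing by $\sO_X$ commutes with $\mathrm{res}$ and produces $\sEnd_0(T_X)$ as the kernel. I expect this bookkeeping with the log bundle to be the main obstacle, though a minor one, since everything else is formal diagram-chasing identical to the untwisted case.
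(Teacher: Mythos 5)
Your proof is correct and follows the same route as the paper: divide the residue sequence tensored with $T_X$ by the trivial summand $\sO_X$ spanned by the identity to obtain $0 \to \sEnd_0(T_X) \to (\Omega^1_X(\log E)\otimes T_X)/\sO_X \to T_X\vert E \to 0$, then take cohomology and use assumption (2) to force the connecting map to be injective. The paper states this in one line ("dividing by the trivial summand... then we conclude as before"); your local verification that the residue kills $\id$ and that the quotient sequence stays exact is exactly the bookkeeping being suppressed there.
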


\begin{proof} 
Dividing by the trivial summand of $\Omega^1_X \otimes T_X$, the residue sequence induces an exact sequence
$$ 0 \to  {\sEnd}_0(T_X) \to \mathcal R := (\Omega^1_X(\log E) \otimes T_X) / \sO_X \to 
T_X \vert E \to 0.$$
Then we conclude as before. 
\end{proof}

Since $ 0 \ne H^0(E,T_E) \subset H^0(E,T_X \vert E)$, we obtain

\begin{corollary} 
 \label{prop:bir1}
Let $X$ be a compact complex manifold. Let $E \subset X$ be a smooth irreducible divisor. Assume that 
\begin{enumerate}
\item $H^0(E,T_E) \ne 0, $
\item $H^0(X,\Omega^1_X \otimes T_X) = H^0(X, \Omega^1_X(\log  E) \otimes T_X)$.

\end{enumerate} 
Then $H^1(X,T_X \otimes \Omega^1_X) \ne 0. $ 
\end{corollary}

\begin{corollary} 
 \label{prop:bir1A}
Let $X$ be a compact complex manifold. Let $E \subset X$ be a smooth irreducible divisor. Assume that 
\begin{enumerate}
\item $H^0(E,T_E) \ne 0, $
\item $h^0(X,\sEnd_0(T_X)) = h^0(X, \Omega^1_X(\log E)  \otimes T_X/ \mathcal O_X )$.

\end{enumerate} 
Then $H^1(X,{\sEnd}_0(T_X)) \ne 0. $ 
\end{corollary}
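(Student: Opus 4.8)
The plan is to deduce this corollary directly from Proposition \ref{prop:bir0A}, exactly as Corollary \ref{prop:bir1} was deduced from Proposition \ref{prop:bir0}. The only work is to upgrade the hypothesis $H^0(E,T_E) \neq 0$ to the hypothesis on $H^0(E, T_X \vert E)$ required by the Proposition; hypothesis (2) here is verbatim hypothesis (2) there, so nothing needs to be done for it.

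First I would invoke the normal bundle sequence of the smooth divisor $E \subset X$,
$$ 0 \to T_E \to T_X \vert E \to N_{E/X} \to 0, $$
which is exact on the left precisely because $E$ is smooth. Taking global sections, left-exactness yields an inclusion $H^0(E, T_E) \hookrightarrow H^0(E, T_X \vert E)$. Since by assumption $H^0(E,T_E) \neq 0$, I conclude $0 \neq H^0(E, T_E) \subset H^0(E, T_X \vert E)$, and in particular $H^0(E, T_X \vert E) \neq 0$.

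This is precisely assumption (1) of Proposition \ref{prop:bir0A}, and assumption (2) of that proposition coincides with assumption (2) of the present statement. Applying Proposition \ref{prop:bir0A} therefore gives $H^1(X, \sEnd_0(T_X)) \neq 0$, as claimed.

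Since the argument is a one-line reduction, there is essentially no genuine obstacle: the sole point to verify is that the sheaf inclusion $T_E \hookrightarrow T_X \vert E$ induces an injection on $H^0$, which is automatic. Thus the entire content of the corollary rests on Proposition \ref{prop:bir0A}, whose proof already carries the analytic weight via the residue sequence for $\Omega^1_X(\log E)$ divided by its trivial summand.
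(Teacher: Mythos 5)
Your proposal is correct and coincides with the paper's own derivation: the paper obtains this corollary from Proposition \ref{prop:bir0A} via exactly the inclusion $0 \ne H^0(E,T_E) \subset H^0(E,T_X\vert E)$, which your normal bundle sequence argument justifies.
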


 \begin{remark} The condition
 $$H^0(X,\Omega^1_X \otimes T_X) = H^0(X, \Omega^1_X(\log  E) \otimes T_X),$$
 certainly holds provided
$$ \dim H^0(X, \Omega^1_X(\log E) \otimes T_X) = 1.$$
\end{remark}

\begin{proposition}  \label{prop:blow-up} Let $X$ be a compact complex $n$-dimensional manifold, $x_0 \in X$ and $\pi: \hat X \to X$ the blow up of $X$  at $x_0$. 
Then 
\begin{enumerate}
\item There is an exact sequence 
\begin{equation} \label{eqA} 0 \to \pi_*(T_{\hat X}\otimes \Omega^1_{\hat X}) \to T_X \otimes \Omega^1_X \to Q \to 0 \end{equation}
with a sheaf $Q$ supported on $x_0$ of length $n^2-1$. 
\item
$h^1(\hat X, T_{\hat X} \otimes \Omega^1_{\hat X}) \geq h^1(X,T_X \otimes \Omega^1_X)$
with strict inequality if $h^0(X,T_X \otimes \Omega^1_X) < n^2$;
\item
$ h^2(\hat X, T_{\hat X} \otimes \Omega^1_{\hat X}) \leq h^2(X,T_X \otimes \Omega^1_X)$;
\item $h^1(\hat X, {\sEnd}_0(T_{\hat X}))  \geq h^1(X, {\sEnd}_0(T_X)) $
with strict inequality if $h^0(X, T_X \otimes \Omega^1_X) < n^2$;
\item $ h^2(\hat X, {\sEnd}_0(T_{\hat X})) \leq h^2(X, {\sEnd}_0(T_X))$.
\end{enumerate} 
\end{proposition}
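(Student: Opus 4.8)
The plan is to push every computation through the birational morphism $\pi$ via the Leray spectral sequence, so the whole argument rests on understanding $\pi_*(T_{\hat X}\otimes\Omega^1_{\hat X})$ and the higher direct images $R^q\pi_*(T_{\hat X}\otimes\Omega^1_{\hat X})$. Write $\sF := T_{\hat X}\otimes\Omega^1_{\hat X} = \sEnd(T_{\hat X})$ and let $E\cong\mathbb P_{n-1}$ be the exceptional divisor. First I would record the reduction that makes (4),(5) formal consequences of (2),(3): the splitting $\sEnd(T_{\hat X}) = \sO_{\hat X}\oplus\sEnd_0(T_{\hat X})$ together with $R\pi_*\sO_{\hat X}=\sO_X$ gives $h^q(\hat X,\sEnd_0 T_{\hat X}) = h^q(\hat X,\sF) - h^q(X,\sO_X)$, and likewise on $X$, so subtracting $h^q(X,\sO_X)$ turns (2),(3) into (4),(5) with the identical strictness hypothesis.

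For (1) I would compute $\pi_*\sF$ locally over $x_0$. Since $\pi$ is an isomorphism off $E$ and $\sF$ is locally free, $\pi_*\sF$ is a torsion-free subsheaf of $\sEnd(T_X)$ whose cokernel $Q$ is a skyscraper at $x_0$. To identify it I use $T_{\hat X}\vert E\cong T_{\mathbb P_{n-1}}\oplus\sO_E(-1)$: the restriction sequence $0\to T_E\to T_{\hat X}\vert E\to N_{E/\hat X}=\sO_E(-1)\to 0$ splits because $\Ext^1(\sO_E(-1),T_E)=H^1(\mathbb P_{n-1},T_{\mathbb P_{n-1}}(1))=0$, and $d\pi\vert E$ factors through the tautological inclusion $\sO_E(-1)\hookrightarrow T_{x_0}X\otimes\sO_E$. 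The compatibility $d\pi\circ\psi=\pi^*\phi\circ d\pi$ then forces the induced endomorphism $\phi$ of $T_X$ to fix every tautological line at $x_0$, i.e. $\phi(x_0)$ is a homothety; conversely every such $\phi$ lifts. Hence $\pi_*\sF=\{\phi\in\sEnd(T_X):\phi(x_0)\in\mathbb C\cdot\id\}$ and $Q=\sEnd(T_{x_0}X)/\mathbb C\cdot\id$ has length $n^2-1$, which is (1).

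Next I would show $R^2\pi_*\sF=0$ (the sheaf $R^1\pi_*\sF$ need not vanish). Both are skyscrapers at $x_0$, so by the theorem on formal functions it suffices to prove $H^2(E,\sF\vert E\otimes\sO_E(j))=0$ for all $j\ge 0$, using $\mathcal I_E^j/\mathcal I_E^{j+1}\cong\sO_E(j)$. With the splitting above, $\sF\vert E$ is the direct sum of $\sEnd(T_{\mathbb P_{n-1}})$, $\Omega^1_{\mathbb P_{n-1}}(-1)$, $T_{\mathbb P_{n-1}}(1)$ and $\sO_E$, and Bott's formula on $\mathbb P_{n-1}$ kills $H^2$ of each summand after twisting by $\sO_E(j)$, $j\ge0$. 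This is the computational heart of the proof and the step I expect to be most delicate to get uniformly right in all dimensions.

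Finally I would run the Leray spectral sequence $E_2^{p,q}=H^p(X,R^q\pi_*\sF)\Rightarrow H^{p+q}(\hat X,\sF)$. Since $R^q\pi_*\sF$ is a skyscraper for $q\ge1$, every $E_2^{p,q}$ with $p,q\ge1$ vanishes and the only possibly nonzero differential is $d_2\colon (R^1\pi_*\sF)_{x_0}\to H^2(X,\pi_*\sF)$. Feeding in the long exact cohomology sequence of (1), which gives $H^q(X,\pi_*\sF)\cong H^q(X,\sEnd T_X)$ for $q\ge2$ and $h^1(X,\pi_*\sF)=h^1(X,\sEnd T_X)+\dim\mathrm{coker}\,\alpha$ with $\alpha\colon H^0(\sEnd T_X)\to H^0(Q)$ evaluation, yields
\[ h^1(\hat X,\sF)=h^1(X,\sEnd T_X)+\dim\mathrm{coker}\,\alpha+\dim\ker d_2,\qquad h^2(\hat X,\sF)=h^2(X,\sEnd T_X)-\dim\im d_2, \]
where $R^2\pi_*\sF=0$ was used for the second identity. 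Both correction terms in the first equation are $\ge 0$, giving (2), and $\dim\im d_2\ge 0$ gives (3). For strictness, $\dim\mathrm{coker}\,\alpha=(n^2-1)-h^0(\sEnd T_X)+h^0(\hat X,\sF)\ge 1$ whenever $h^0(X,T_X\otimes\Omega^1_X)=h^0(\sEnd T_X)<n^2$, since $h^0(\hat X,\sF)\ge 1$. The assertions (4),(5) then follow by the reduction of the first paragraph.
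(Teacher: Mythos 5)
Your proof is correct, and its overall architecture --- establish the exact sequence (1), then feed its long exact cohomology sequence into the Leray spectral sequence for $\pi$, and finally subtract $h^q(X,\sO_X)$ to get (4),(5) --- is the same as the paper's. You diverge in two places. First, for assertion (1) the paper never identifies $\pi_*(T_{\hat X}\otimes\Omega^1_{\hat X})$ inside $T_X\otimes\Omega^1_X$ explicitly: it pushes forward the relative cotangent sequence using $R^1\pi_*T_{\hat X}=0$, combines this with $0\to\pi_*(T_{\hat X})\otimes\Omega^1_X\to T_X\otimes\Omega^1_X\to\mathbb C^n_{x_0}\otimes\Omega^1_X\to 0$, and obtains the length of $Q$ from a diagram chase together with the computation $\pi_*(T_{\hat X}\vert E\otimes\Omega^1_E)\simeq\mathbb C_{x_0}$, so that $\mathrm{length}(Q)=n^2-1$. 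Your local lifting criterion (an endomorphism $\phi$ lifts to $\hat X$ iff $\phi(x_0)$ is a homothety) yields the sharper identification $Q\simeq\sEnd(T_{x_0}X)/\mathbb C\cdot\id$ directly; a coordinate check in a blow-up chart confirms that preservation of the tautological lines is both necessary and sufficient, with no extra first-order conditions, so this step is sound and arguably more transparent. Second, you explicitly prove $R^2\pi_*(T_{\hat X}\otimes\Omega^1_{\hat X})=0$ via formal functions and Bott's formula, whereas the paper merely asserts that (3) follows ``by taking cohomology and using the Leray spectral sequence''; since the Leray argument only bounds $h^2(\hat X,\sF)$ by $h^2(X,\pi_*\sF)+h^0(R^2\pi_*\sF)$, this vanishing is genuinely needed, and your version supplies a verification the paper leaves implicit. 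The remaining bookkeeping (the $d_2$ differential, the cokernel of the evaluation map $\alpha$, and the strictness criterion from $h^0(X,T_X\otimes\Omega^1_X)<n^2$ together with $h^0(\hat X,\sF)\geq 1$) is correct and consistent with the paper.
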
 

\begin{proof} 
Set $E = \pi^{-1}(x_0)$. We clearly have an exact sequence
$$ 0 \to \pi_*(T_{\hat X} \otimes \Omega^1_{\hat X}) \to T_X \otimes \Omega^1_X \to Q \to 0  $$
with a sheaf
sheaf $Q$ which is zero outside $x_0$. 
Tenorize the exact sequence 
$$ 0 \to \pi^*(\Omega^1_X) \to \Omega^1_{\hat X} \to \Omega^1_{\hat X/X} \to 0 $$
by $T_{\hat X}$, take $\pi_*$ and use $R^1\pi_*(T_{\hat X}) = 0 $ to obtain the exact sequence 
$$ 0 \to \pi_*(T_{\hat X}) \otimes \Omega^1_X \to \pi_*(T_{\hat X} \otimes \Omega^1_{\hat X}) \to \pi_*(T_{\hat X} \vert E \otimes \Omega^1_{E}) \to 0.$$
Further, we have an exact sequence
$$ 0 \to \pi_*(T_{\hat X}) \otimes \Omega^1_X  \to T_X \otimes \Omega^1_X \to \mathbb C^n_{x_0} \otimes \Omega^1_X \to 0.$$
Then we obtain the following commutative diagram 

\begin{equation} \label{BigCommDiagramm}
\xymatrix{
& & 0 \ar[d] & 0 \ar[d] & & & \\ 
& 0 \ar[r] & \pi_*(T_{\hat X}) \otimes \Omega^1_X  \ar[d] \ar[r]  &  \pi_*(T_{\hat X} \otimes \Omega^1_{\hat X}) \ar[d] \ar[r] & \pi_*(T_{\hat X} \otimes \Omega^1_E)  \ar[r] & 0 \\
&  & T_X \otimes \Omega^1_X \ar[r]^{=} \ar[d]   & T_X \otimes \Omega^1_X \ar[d]  & &   \\
0  \ar[r] & \ker \lambda \ar[r] & \mathbb C_{x_0}^n \otimes \Omega^1_X \ar[r]^{\lambda} \ar[d] & Q \ar[d] \ar[r] &0  &   \\
& & 0 & 0 &  & & 
}
\end{equation}

A diagram chase shows that 
$$ \ker \lambda \simeq   \pi_*(T_{\hat X} \vert E \otimes \Omega^1_{E}) ,$$ 
yielding an exact sequence
$$ 0 \to \pi_*(T_{\hat X} \vert E \otimes \Omega^1_{E}) \to  \mathbb C^n_{x_0} \otimes \Omega^1_X \to Q \to 0.$$
Observe finally 
\begin{equation} \label{coh}   \pi_*(T_{\hat X} \vert E \otimes  \Omega^1_{E}) \simeq \mathbb C_{x_0}. \end{equation} 
In fact, use the exact sequence 
$$ 0 \to T_E \otimes \Omega^1_E \to T_{\hat X} \vert E \otimes \Omega_E^1 \to N_E \otimes \Omega^1_E \to 0 $$
to see that 
$$  h^0(E, T_{\hat X} \vert E \otimes \Omega^1_E) = 1 $$
and
$$ H^0(E, N_E^{*\otimes \mu} \otimes T_{\hat X} \vert E \otimes \Omega^1_E) = 0$$
for all $\mu \geq 1$, which yields (\ref{coh}). 
Then Assertion (1) follows by (\ref{coh}). \\
Assertions (2) and (3) then  follow immediately by taking cohomology and using the Leray spectral 
sequence. Finally, (4) and (5) are immediate from (2) and (3), since $H^q(\hat X,\sO_{\hat X}) = H^q(X,\sO_X)$ for all $q$. 

\end{proof} 

\begin{corollary} \label{cor:blow-up}  Let $X$ be a compact complex $n$-dimensional manifold, $x_0 \in X$ and $\pi: \hat X \to X$ the blow up of $X$  at $x_0$. 
Suppose that $T_X$ has a genuine first order (resp. genuine non-obstructed) deformation. Suppose further that $h^0(X,T_X \otimes \Omega^1_X) < n^2$. 
Then  $T_{\hat X}$ has a genuine first order (genuine non-obstructed) deformation.
\end{corollary}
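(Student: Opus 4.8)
The plan is to reduce the statement to the two cohomological criteria recorded in the introduction and then simply feed in the inequalities of Proposition \ref{prop:blow-up}. By the Remark following (\ref{eq1-1}), $T_X$ has a genuine first order deformation if and only if $h^1(X,\sEnd_0(T_X)) \neq 0$, and by (\ref{eq1b}) together with that Remark, $T_X$ has a genuine non-obstructed deformation as soon as $h^1(X,\sEnd_0(T_X)) - h^2(X,\sEnd_0(T_X)) > 0$. The identical criteria apply to $\hat X$ in place of $X$, so the task is purely to compare the corresponding cohomology groups on $\hat X$ and $X$.

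For the first order case I would invoke assertion (4) of Proposition \ref{prop:blow-up}. The hypothesis $h^0(X,T_X\otimes\Omega^1_X) < n^2$ is exactly the condition under which that inequality is strict, so
\[
h^1(\hat X,\sEnd_0(T_{\hat X})) > h^1(X,\sEnd_0(T_X)).
\]
Since $T_X$ having a genuine first order deformation means $h^1(X,\sEnd_0(T_X)) > 0$, I conclude $h^1(\hat X,\sEnd_0(T_{\hat X})) > 0$, which is the criterion for $T_{\hat X}$. (In fact the strict inequality already forces $h^1(\hat X,\sEnd_0(T_{\hat X})) > 0$ on its own, since $h^1(X,\sEnd_0(T_X)) \geq 0$ always.)

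For the non-obstructed case I would combine assertions (4) and (5) of the same proposition. The strict form of (4) gives $h^1(\hat X,\sEnd_0(T_{\hat X})) > h^1(X,\sEnd_0(T_X))$, while (5) gives $h^2(\hat X,\sEnd_0(T_{\hat X})) \leq h^2(X,\sEnd_0(T_X))$; subtracting yields
\[
h^1(\hat X,\sEnd_0(T_{\hat X})) - h^2(\hat X,\sEnd_0(T_{\hat X})) > h^1(X,\sEnd_0(T_X)) - h^2(X,\sEnd_0(T_X)) > 0,
\]
which is the criterion for a genuine non-obstructed deformation on $\hat X$. Once Proposition \ref{prop:blow-up} is in hand there is essentially no obstacle; the only point demanding care is that the strict inequalities genuinely require the hypothesis $h^0(X,T_X\otimes\Omega^1_X) < n^2$, failing which one recovers only the weak inequalities and the conclusion may break down. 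If one prefers to argue with $T_X\otimes\Omega^1_X$ directly rather than with $\sEnd_0$, the same deduction goes through using (2) and (3) in place of (4) and (5), together with the equalities $q(\hat X) = q(X)$ and $h^2(\hat X,\sO_{\hat X}) = h^2(X,\sO_X)$ noted at the end of the proof of the proposition.
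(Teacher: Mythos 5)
Your proposal is correct and is exactly the argument the paper intends: the corollary is stated without proof as an immediate consequence of Proposition \ref{prop:blow-up}, and your deduction via the strict form of assertion (4) together with assertion (5) (equivalently (2) and (3) plus the invariance of $h^q(\sO)$) is the intended one. The only point worth registering is that in the non-obstructed case you implicitly read the hypothesis on $T_X$ as the numerical condition $h^1(X,\sEnd_0(T_X))-h^2(X,\sEnd_0(T_X))>0$, which is how the paper uses the notion throughout even though the introduction phrases it only as a sufficient criterion.
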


If  $T_X$ is simple, we can say more:

\begin{theorem} \label{thm:gendiv} 
Let $X$ be a  compact complex manifold
such that $$H^0(X,\Omega^1_X \otimes T_X ) \simeq \mathbb C;$$
e.g., $T_X$ is stable with respect to a Gauduchon metric. 
Let either $\pi: \hat X \to X$ be the blow-up of a point in $X$ or $\dim X \geq 3$ and  $\pi: \hat X \to X$ be the blow-up
of a smooth rational curve $C \subset X$. Then 
$$ H^1(\hat X, {\sEnd}_0(T_{\hat X})) \ne 0. $$
\end{theorem}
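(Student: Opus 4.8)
The plan is to apply Corollary \ref{prop:bir1A} to $\hat X$ with $E = \pi^{-1}(Z)$ the exceptional divisor over the center $Z$ (a point or the rational curve $C$). In both cases $E$ is a smooth irreducible divisor and $\codim(Z,X) \geq 2$: codimension $n \geq 2$ for a point, and codimension $n-1 \geq 2$ for the curve since $\dim X \geq 3$. This codimension bound is what powers the whole argument, through the Hartogs extension across $Z$. It remains to verify the two hypotheses of the corollary. Note that the assumption $H^0(X,\Omega^1_X \otimes T_X) \simeq \mathbb{C}$ says precisely that $T_X$ is simple, i.e. $H^0(X,\sEnd_0(T_X)) = 0$.

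The main step is hypothesis (2), namely $h^0(\hat X,\sEnd_0(T_{\hat X})) = h^0(\hat X,\mathcal{R})$ with $\mathcal{R} := (\Omega^1_{\hat X}(\log E)\otimes T_{\hat X})/\sO_{\hat X}$. The key observation is that $\mathcal{R}$ is \emph{locally free}: the identity endomorphism realizes $\sO_{\hat X}$ as a direct summand of $\sEnd(T_{\hat X}) = \Omega^1_{\hat X}\otimes T_{\hat X}$ (split by the trace, since $\mathrm{tr}(\id) = n \neq 0$), and $\sEnd(T_{\hat X})$ is in turn the kernel of the residue surjection $\Omega^1_{\hat X}(\log E)\otimes T_{\hat X} \to T_{\hat X}\vert E$ of Proposition \ref{prop:bir0}, hence a subbundle; so $\sO_{\hat X}$ sits inside $\Omega^1_{\hat X}(\log E)\otimes T_{\hat X}$ as a subbundle and $\mathcal{R}$ is locally free. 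Over the dense open set $\hat X \setminus E \cong X \setminus Z$ one has $\mathcal{R} \cong \sEnd_0(T_X)$, and restriction gives an injection (by torsion-freeness)
\[
 H^0(\hat X,\mathcal{R}) \hookrightarrow H^0(X\setminus Z,\sEnd_0(T_X)) = H^0(X,\sEnd_0(T_X)),
\]
where the last identification is Hartogs' extension for the locally free sheaf $\sEnd_0(T_X)$ across the codimension $\geq 2$ set $Z$. Since $T_X$ is simple this group vanishes, so $h^0(\hat X,\mathcal{R}) = 0$; the identical argument applied to $\sEnd_0(T_{\hat X})$ (or the inclusion $\sEnd_0(T_{\hat X}) \subseteq \mathcal{R}$) gives $h^0(\hat X,\sEnd_0(T_{\hat X})) = 0$. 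Thus hypothesis (2) holds.

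For hypothesis (1) I must produce $H^0(E,T_E) \neq 0$. If $Z$ is a point then $E \cong \mathbb{P}_{n-1}$ and $H^0(T_{\mathbb{P}_{n-1}}) \neq 0$ for $n \geq 2$. If $Z = C$ is the rational curve, then $E = \mathbb{P}(N_{C/X})$ is a $\mathbb{P}_{n-2}$-bundle over $C \cong \mathbb{P}_1$; pushing forward the relative Euler sequence gives $\pi_* T_{E/C} \cong \sEnd(N_{C/X})/\sO_C$, and over $\mathbb{P}_1$ already the traceless diagonal part of $\sEnd(N_{C/X})$ contributes sections (it contains $\sO_{\mathbb{P}_1}^{\oplus(n-2)}$ with $n-2 \geq 1$), so $H^0(E,T_{E/C}) \neq 0$; via the inclusion $T_{E/C}\hookrightarrow T_E$ this yields $H^0(E,T_E) \neq 0$. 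This is exactly the place where the rationality of $C$ is essential: for a curve of higher genus with stable normal bundle both $H^0(E,T_{E/C})$ and $H^0(E,\pi^*T_C)$ can vanish.

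With both hypotheses verified, Corollary \ref{prop:bir1A} gives $H^1(\hat X,\sEnd_0(T_{\hat X})) \neq 0$. In the point case one may alternatively invoke Proposition \ref{prop:blow-up}(4) directly, since $h^0(X,\Omega^1_X\otimes T_X) = 1 < n^2$. The step I expect to be the main obstacle is hypothesis (2): the naive route computes $h^0(\mathcal{R})$ from the sequence $0 \to \sO_{\hat X} \to \Omega^1_{\hat X}(\log E)\otimes T_{\hat X} \to \mathcal{R} \to 0$, which only bounds it by $q(X) = h^1(\sO_{\hat X})$ and stalls when $q(X) > 0$. Recognizing instead that $\mathcal{R}$ is locally free, so that the Hartogs argument across the codimension $\geq 2$ center applies and forces $h^0(\mathcal{R}) = 0$ outright, is the crux that makes the proof independent of $q(X)$.
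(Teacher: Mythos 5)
Your proposal is correct and follows essentially the same route as the paper's proof: apply Corollary \ref{prop:bir1A} to the exceptional divisor, verify $H^0(E,T_E)\ne 0$ via the relative Euler sequence and the splitting of $N_{C/X}$ over $\mathbb P_1$, and verify hypothesis (2) by identifying $\mathcal R$ with $\sEnd_0(T_X)$ off $E$, extending across the codimension $\geq 2$ center, and using torsion-freeness of $\mathcal R$ to make restriction injective. One small caveat: your intermediate claim that $\sEnd(T_{\hat X})$ is a \emph{subbundle} of $\Omega^1_{\hat X}(\log E)\otimes T_{\hat X}$ because it is the kernel of the residue surjection is not a valid inference (the quotient $T_{\hat X}\vert E$ is supported on $E$, so the inclusion is not a subbundle inclusion); however, the only fact needed is that $\mathcal R$ is torsion-free, which holds because the identity section of $\sEnd(T_{\hat X})$ does not vanish along $E$ when viewed in $\Omega^1_{\hat X}(\log E)\otimes T_{\hat X}$ --- this is exactly the saturation argument the paper gives.
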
 

\begin{proof} We restrict ourselves to the case of the blow-up of a curve; the point blow-up being completely analogous. 
We verify the conditions of  Corollary \ref{prop:bir1A}, applied to the exceptional divisor $E = \pi^{-1}(C)$
and use
again the notation 
$$ \mathcal R = (\Omega^1_{\hat X}(\log E) \otimes T_{\hat X}) / \sO_{\hat X}.$$ 
Since $E = \mathbb P(N^*_{C/X}) $, the relative Euler sequence and the splitting of $N_{C/X}$ yields
$$ h^0(E,T_{E/C}) = h^0(C,N^*_{C/X} \otimes N_{C/X}) - 1 \geq 1,$$
thus $H^0(E,T_E) \ne 0$. 
As to the second condition, we first observe the following chain of inclusions and equations
$$ H^0(\hat X \setminus E,\mathcal R) = H^0(\hat X \setminus E, {\sEnd}_0(T_{\hat X)}) = H^0(X \setminus C, {\sEnd}_0(T_X)) = $$
$$ = H^0(X,{\sEnd}_0(T_X)),$$ 
the last equation coming from Riemann's extension theorem.\\
By our assumption,  $H^0(X,{\sEnd}_0(T_X)) = 0$, hence $H^0(\hat X \setminus E, \mathcal R) = 0$. 
Now $\mathcal R$ is torsion free; in fact, otherwise $\sO_X $ would not be saturated in $\Omega^1_X(\log E) \otimes T_X$, thus $\sO_X \to \Omega^1_X(\log E) \otimes T_X$
would vanish along $E$, which is clearly not the case. 
Consequently, 
$H^0(X,\mathcal R) \to H^0(X \setminus E,\mathcal R) $ is injective, hence we conclude.

\end{proof} 

\begin{corollary}  \label{corelli} Theorem \ref{thm:gendiv} remains true for curves $C$ of genus $g \geq 1,$ provided $H^0(T_E) \ne 0.$ 
If $g \geq 2$, this is equivalent to $$h^0(N_C \otimes N_C^*) \geq 2,$$
i.e., $N_C$ is not simple. 
In case $g = 1$, we might also have $h^0(N_C \otimes N_C^*) = 1 $
and the vector field on $C$ lifts to $E.$ 
\end{corollary}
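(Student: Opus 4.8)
The plan is to reduce the corollary to the two verifications already needed in the proof of Theorem \ref{thm:gendiv}, namely checking condition (1) of Corollary \ref{prop:bir1A} (that $H^0(E,T_E)\ne 0$) and condition (2) (the equality of global sections), and to observe that the second verification in Theorem \ref{thm:gendiv} never used anything about the genus of $C$. Indeed, the chain of inclusions and equations establishing $H^0(\hat X,\mathcal R)=0$ rests only on Riemann's extension theorem and the torsion-freeness of $\mathcal R$, both of which are insensitive to $g$. So the entire content of the corollary is the analysis of condition (1), i.e.\ when $H^0(E,T_E)\ne 0$ for $E=\PN(N^*_{C/X})$ over a curve $C$ of positive genus.

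First I would record that, since $E\to C$ is a $\PN^{n-2}$-bundle, the vector fields on $E$ sit in the exact sequence coming from the relative tangent bundle,
\begin{equation} 0 \to H^0(E,T_{E/C}) \to H^0(E,T_E) \to H^0(C,T_C), \nonumber \end{equation}
so I must understand $H^0(E,T_{E/C})$ via the relative Euler sequence and compare it with $H^0(C,T_C)$. Pushing the relative Euler sequence forward along $E\to C$ identifies $H^0(E,T_{E/C})$ with $H^0(C,\sEnd(N_{C/X}))/H^0(C,\sO_C)=H^0(C,\sEnd_0(N_{C/X}))$, equivalently with $h^0(C,N_{C/X}\otimes N^*_{C/X})-1$ once one divides out the scalars. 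This gives the stated criterion: $H^0(E,T_E)\ne 0$ is automatic as soon as $h^0(N_C\otimes N_C^*)\geq 2$, i.e.\ $N_{C/X}$ fails to be simple, which for $g\geq 2$ is the only source of vertical vector fields because then $H^0(C,T_C)=0$ kills the horizontal contribution. For $g=1$ the extra subtlety is that $H^0(C,T_C)\simeq\mathbb C$, so even if $N_{C/X}$ is simple ($h^0(N_C\otimes N_C^*)=1$) one may still get a nonzero section of $T_E$ provided the generating vector field on $C$ lifts to $E$; this is exactly the final clause of the statement.

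The key steps, in order, are therefore: (i) invoke verbatim the second half of the proof of Theorem \ref{thm:gendiv} to get condition (2) of Corollary \ref{prop:bir1A}, noting that no genus hypothesis entered there; (ii) compute $H^0(E,T_{E/C})$ from the relative Euler sequence as $h^0(C,N_C\otimes N_C^*)-1$; (iii) for $g\geq 2$, use $H^0(C,T_C)=0$ to conclude that $H^0(E,T_E)\ne 0$ is equivalent to $H^0(E,T_{E/C})\ne 0$, i.e.\ to $h^0(N_C\otimes N_C^*)\geq 2$; (iv) for $g=1$, account separately for the horizontal vector field, observing that it contributes to $H^0(E,T_E)$ precisely when it lifts. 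Once $H^0(E,T_E)\ne 0$ is secured, Corollary \ref{prop:bir1A} delivers $H^1(\hat X,\sEnd_0(T_{\hat X}))\ne 0$ exactly as in the theorem.

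The main obstacle I anticipate is step (iv), the genus-one lifting question: unlike the higher-genus case, where the nonvanishing is governed entirely by the algebraic invariant $h^0(N_C\otimes N_C^*)$, deciding whether the elliptic curve's translation vector field extends to the projectivized bundle $E$ requires analyzing the connecting map $H^0(C,T_C)\to H^1(E,T_{E/C})$ (equivalently the obstruction sitting in $H^1(C,\sEnd_0(N_{C/X}))$ that measures the failure of the Atiyah-type extension to split). This is a genuinely geometric condition on how $N_{C/X}$ sits over the elliptic curve and cannot be reduced to a simple numerical inequality, which is why the statement phrases it as a hypothesis (``the vector field on $C$ lifts to $E$'') rather than as a closed-form criterion.
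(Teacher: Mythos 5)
Your proposal is correct and follows exactly the route the paper intends: the paper gives no separate proof of this corollary because it is meant as the observation that only condition (1) of Corollary \ref{prop:bir1A} (the nonvanishing of $H^0(E,T_E)$) used the rationality of $C$ in the proof of Theorem \ref{thm:gendiv}, while the verification of condition (2) via Riemann's extension theorem and the torsion-freeness of $\mathcal R$ is genus-independent. Your analysis of $H^0(E,T_E)$ through the relative tangent sequence and the relative Euler sequence, separating the vertical contribution $h^0(N_C\otimes N_C^*)-1$ from the horizontal one governed by $H^0(C,T_C)$ and the lifting question at $g=1$, is precisely the content of the corollary's second and third sentences.
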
 

\begin{remark} Instead of assuming $T_X$ to be simple in Theorem \ref{thm:gendiv}, it suffices to assume that 
$$ H^0(\hat X, T_{\hat X} \otimes \Omega^1_{\hat X}) = H^0(X,T_X \otimes \Omega^1_X). $$

\end{remark}

\section{Surfaces} 
\setcounter{lemma}{0}

We start by some general calculations. 

\begin{proposition} \label{prop:RR2} 
Let $X$ be a smooth compact complex surface.  Then 
\begin{enumerate}
\item $\chi(X,T_X \otimes \Omega^1_X) = \frac{1}{3} \bigl (4c_1^2(X) - 11c_2(X) \bigr)$
\item $h^1(X,T_X \otimes \Omega^1_X) = $ \\
$ \frac{1}{3} \bigl(-4c_1^2(X) +11c_2(X) \bigr) + h^0(X,T_X \otimes \Omega^1_X)  + 
 h^0(X,(\Omega^1_X)^{\otimes 2})$ 
\item $h^1(X,T_X \otimes \Omega^1_X) = $\\
$  \frac{5}{4} \bigl(3 c_2(X)  - c_1^2(X)) + q(X) + h^0(X,{\sEnd}_0(T_X)) + h^0(X,S^2\Omega^1_X).$
\item $h^1(X,{\sEnd}_0(T_X)) - h^2(X,{\sEnd}_0(T_X)) = $ \\
$  \frac{5}{4} (3 c_2(X)  - c_1^2(X) ) + h^0(X,T_X \otimes \Omega^1_X) - 1.$
\item If $c_1^2(X) < 3c_2(X)$, then $h^1(X,{\sEnd}_0(T_X)) - h^2(X,{\sEnd}_0(T_X)) > 0$.
\end{enumerate}

\end{proposition}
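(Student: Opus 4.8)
The plan is to obtain all five assertions as a short cascade: (1) from Hirzebruch--Riemann--Roch, (2)--(3) from Serre duality together with standard splittings, (4) from a second Euler-characteristic computation, and (5) as an immediate positivity consequence of (4). For (1), writing $c_i = c_i(X)$, I would compute $\mathrm{ch}(T_X \otimes \Omega^1_X) = \mathrm{ch}(T_X)\,\mathrm{ch}(\Omega^1_X)$ from $\mathrm{ch}(T_X) = 2 + c_1 + \tfrac12(c_1^2 - 2c_2)$ and $\mathrm{ch}(\Omega^1_X) = 2 - c_1 + \tfrac12(c_1^2 - 2c_2)$ (the Chern data of the dual). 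The degree-one terms cancel, leaving $\mathrm{ch}(T_X \otimes \Omega^1_X) = 4 + (c_1^2 - 4c_2)$. Pairing this against $\mathrm{td}(X) = 1 + \tfrac12 c_1 + \tfrac1{12}(c_1^2 + c_2)$ and extracting the top-degree part yields $\chi = \tfrac13(4c_1^2 - 11c_2)$, which is (1).

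The key to (2) is that $T_X \otimes \Omega^1_X \cong \sEnd(T_X)$ is self-dual, so Serre duality gives $h^2(X, T_X \otimes \Omega^1_X) = h^0(X, T_X \otimes \Omega^1_X \otimes K_X)$. I would then use the rank-two isomorphism $T_X \cong \Omega^1_X \otimes K_X^{-1}$, i.e. $T_X \otimes K_X \cong \Omega^1_X$, to rewrite $T_X \otimes \Omega^1_X \otimes K_X \cong (\Omega^1_X)^{\otimes 2}$; hence $h^2 = h^0(X, (\Omega^1_X)^{\otimes 2})$, and (2) follows from $h^1 = h^0 + h^2 - \chi$. For (3) I would split $T_X \otimes \Omega^1_X = \sEnd_0(T_X) \oplus \sO_X$ and $(\Omega^1_X)^{\otimes 2} = S^2\Omega^1_X \oplus K_X$, turning the two $h^0$-terms of (2) into $h^0(\sEnd_0(T_X)) + 1$ and $h^0(S^2\Omega^1_X) + p_g(X)$ with $p_g(X) = h^0(X,K_X)$. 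Feeding Noether's formula $1 - q(X) + p_g(X) = \tfrac1{12}(c_1^2 + c_2)$ into this and collecting the numerical terms produces the coefficient $\tfrac54(3c_2 - c_1^2)$, giving (3).

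For (4), rather than resubtracting from (3), I would directly compute $\chi(\sEnd_0(T_X)) = \chi(T_X \otimes \Omega^1_X) - \chi(\sO_X) = \tfrac54(c_1^2 - 3c_2)$ using (1) and Noether, and then read off $h^1 - h^2 = h^0 - \chi$ together with $h^0(\sEnd_0(T_X)) = h^0(T_X \otimes \Omega^1_X) - 1$. Finally (5) is immediate from (4): when $c_1^2 < 3c_2$ the term $\tfrac54(3c_2 - c_1^2)$ is strictly positive, while the identity endomorphism shows $h^0(T_X \otimes \Omega^1_X) \geq h^0(\sO_X) = 1$, so $h^0(T_X \otimes \Omega^1_X) - 1 \geq 0$ and the total is positive.

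The computations are routine Chern-number arithmetic, so I do not expect a genuine obstacle. The two steps I would check most carefully are the duality identification in (2) — correctly using $T_X \cong \Omega^1_X \otimes K_X^{-1}$ to convert the Serre dual into $(\Omega^1_X)^{\otimes 2}$ — and the bookkeeping that trades $p_g$ for $q$ via Noether's formula in passing from (2) to (3), since a sign or a coefficient slip would most easily creep in there.
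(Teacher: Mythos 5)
Your proposal is correct and follows essentially the same route as the paper: Riemann--Roch for (1), Serre duality combined with $T_X\otimes K_X\simeq \Omega^1_X$ for (2), the splittings $\sEnd(T_X)=\sEnd_0(T_X)\oplus\sO_X$ and $(\Omega^1_X)^{\otimes 2}=S^2\Omega^1_X\oplus K_X$ together with Noether's formula for (3), the Euler-characteristic identity $h^1-h^2=h^0-\chi$ for (4), and the observation $h^0(X,T_X\otimes\Omega^1_X)\geq 1$ for (5). The only cosmetic difference is that you compute $\mathrm{ch}(T_X\otimes\Omega^1_X)$ as a product of Chern characters where the paper quotes $c_1=0$ and $c_2=4c_2(X)-c_1^2(X)$ for the tensor product, which is equivalent.
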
 

\begin{proof} (1) follows from Riemann-Roch, since $c_1(T_X \otimes \Omega^1_X) = 0$ and $c_2(T_X \otimes \Omega^1_X) = 4c_2(X) - c_1^2(X).$ \\
(2)  is a consequence of (1), using 
$$ H^2(X,T_X \otimes \Omega^1_X) \simeq H^0(X,T_X \otimes \Omega^1_X \otimes K_X) = H^0(X,(\Omega^1_X)^{\otimes 2}).$$
For (3), we apply (2), observe that $$(\Omega^1_X)^{\otimes 2} \simeq S^2\Omega^1_X \oplus K_X$$
and use $$ h^0(X,K_X) = h^2(X,\sO_X) = \chi(X,\sO_X)  - 1 + q(X) = \frac{1}{12} \bigl (c_1^2(X) + c_2(X)\bigr) - 1 + q(X). $$
As to (4), we have, using (1),  
$$h^1(X,{\it End}_0(T_X)) - h^2(X,{\it End}_0(T_X)) = h^1(X,T_X \otimes \Omega^1_X) - h^2(X,T_X \otimes \Omega^1_X) - q(X) + $$
$$ +  h^2(X,\sO_X) =  -  \chi(X,T_X \otimes \Omega^1_X) + h^0(X,T_X \otimes \Omega^1_X) + \chi(X,\sO_X) - 1 = $$
$$ =  \frac{45}{12} c_2(X)  - \frac{15}{12}c_1^2(X) + h^0(X,T_X \otimes \Omega^1_X) - 1.$$
This yields claim (4), and (5) follows from (4). 

\end{proof}

\begin{corollary} \label{cor1} Let $X$ be a compact complex surface. If $c_1^2(X) < 3 c_2(X) $, then $T_X$ has a genuine non-obstructed deformation. 
\end{corollary}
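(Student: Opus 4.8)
The plan is to deduce the corollary directly from part (5) of Proposition \ref{prop:RR2}. The hypothesis $c_1^2(X) < 3c_2(X)$ is precisely the condition appearing in part (5), which tells us that
$$ h^1(X,{\sEnd}_0(T_X)) - h^2(X,{\sEnd}_0(T_X)) > 0. $$
By the Remark following the Problem in the introduction, this inequality is exactly equivalent to Equation (\ref{eq1b}), which is the numerical criterion guaranteeing that $T_X$ has a non-obstructed genuine deformation. So the entire corollary is essentially a translation of the already-proven inequality into the language of deformations.

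First I would recall from the introduction how the two descriptions match up. The splitting $T_X \otimes \Omega^1_X \simeq \sEnd_0(T_X) \oplus \sO_X$ gives, for each $q$, the decomposition $H^q(X,T_X\otimes\Omega^1_X) = H^q(X,\sEnd_0(T_X)) \oplus H^q(X,\sO_X)$. Feeding this into the quantity defining non-obstructedness, namely the left-hand side of (\ref{eq1b}),
$$ h^1(X,T_X\otimes \Omega^1_X) - h^2(X,T_X \otimes \Omega^1_X) - q(X) + h^2(X,\sO_X), $$
the $\sO_X$-contributions cancel and we are left with $h^1(X,\sEnd_0(T_X)) - h^2(X,\sEnd_0(T_X))$. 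Thus the positivity statement of part (5) says exactly that the expression in (\ref{eq1b}) is strictly positive.

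Next I would invoke the definition of a genuine non-obstructed deformation as set up in the introduction: by (\ref{eq1-1}), a positive value of $h^1(X,\sEnd_0(T_X))$ means $T_X$ admits a genuine first order deformation, and by the obstruction analysis leading to (\ref{eq1b}) (the obstructions lying in the space (\ref{eq1a})), strict positivity of $h^1 - h^2$ of $\sEnd_0(T_X)$ forces the existence of a deformation that is both genuine and unobstructed. In particular positivity of the difference forces $h^1(X,\sEnd_0(T_X)) > 0$, giving the genuine first order deformation, while the surplus over $h^2$ guarantees it is unobstructed.

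Strictly speaking there is almost nothing to prove here beyond bookkeeping, so I do not expect a genuine obstacle; the only point requiring a little care is to make explicit that the numerical surplus $h^1 - h^2 > 0$ really does produce an actual unobstructed deformation and not merely a favorable Euler-characteristic count. This is handled by the general deformation-theoretic principle, already stated in the introduction, that the obstructions live in $H^2(X,\sEnd_0(T_X))$ and that a deformation direction in $H^1(X,\sEnd_0(T_X))$ survives whenever the first cohomology strictly dominates the second. Once that principle is cited, the corollary follows immediately from Proposition \ref{prop:RR2}(5).
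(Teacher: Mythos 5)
Your proposal is correct and follows exactly the route the paper intends: the corollary is stated without proof precisely because it is the immediate combination of Proposition \ref{prop:RR2}(5) with the criterion (\ref{eq1b}) from the introduction (via the splitting $T_X\otimes\Omega^1_X\simeq \sEnd_0(T_X)\oplus\sO_X$). Your bookkeeping of the cancellation of the $\sO_X$-terms and the appeal to the obstruction space (\ref{eq1a}) is exactly the intended argument.
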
 

\begin{theorem} \label{thm1} Let $X$ be a compact complex K\"ahler surface. Then the following assertions are equivalent.
\begin{enumerate}
\item $T_X$ has a genuine first order deformation.
\item  $X \not \simeq \mathbb P_2$ and
$X$ is not a ball quotient with $H^0(X,S^2\Omega^1_X) = 0$. 

\end{enumerate}

\end{theorem}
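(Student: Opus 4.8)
The plan is to prove the equivalence by showing both directions, relying heavily on the Riemann--Roch computations of Proposition~\ref{prop:RR2}. The key dichotomy is the sign of $c_1^2(X) - 3c_2(X)$, i.e. whether the surface is ``below'', ``on'', or ``above'' the Bogomolov--Miyaoka--Yau line. For the implication $(2) \Rightarrow (1)$, I would argue by cases on the Kodaira dimension. If $c_1^2(X) < 3c_2(X)$, then Corollary~\ref{cor1} immediately gives a genuine (even non-obstructed) deformation, so $(1)$ holds; thus the only surfaces that could fail to have a genuine first order deformation satisfy $c_1^2(X) \geq 3c_2(X)$. By the Bogomolov--Miyaoka--Yau inequality, for a surface of general type with $T_X$ (semi)stable we have $c_1^2 \le 3c_2$, so equality $c_1^2 = 3c_2$ forces $X$ to be a ball quotient; and the remaining possibility $c_1^2 > 3c_2$ should be ruled out or shown to force $X \simeq \mathbb P_2$ (where indeed $c_1^2 = 9$, $c_2 = 3$, so $c_1^2 = 3c_2$ as well, the projective plane being the boundary case realized by $\mathbb P^2$ and ball quotients).

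The heart of the matter is therefore the critical line $c_1^2(X) = 3c_2(X)$, where Proposition~\ref{prop:RR2}(3) specializes to
\[
h^1(X,T_X \otimes \Omega^1_X) = q(X) + h^0(X,\sEnd_0(T_X)) + h^0(X,S^2\Omega^1_X).
\]
By the Remark following the Notation, $T_X$ has a genuine first order deformation precisely when $h^1(X,\sEnd_0(T_X)) \ne 0$, equivalently when $h^1(X,T_X \otimes \Omega^1_X) > q(X)$. On the critical line this is equivalent to
\[
h^0(X,\sEnd_0(T_X)) + h^0(X,S^2\Omega^1_X) > 0.
\]
So I must determine when both these invariants vanish. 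For a ball quotient of dimension two, the Higgs-stability / rigidity of the tangent bundle forces $h^0(X,\sEnd_0(T_X)) = 0$ (indeed $T_X$ is stable and simple, so $\sEnd_0(T_X)$ has no sections), reducing the condition to exactly the stated hypothesis $H^0(X,S^2\Omega^1_X) = 0$. For $X \simeq \mathbb P_2$ one checks directly that $h^0(\sEnd_0(T_X)) = 0$ (the tangent bundle of $\mathbb P_2$ is stable) and $h^0(\mathbb P_2, S^2\Omega^1_{\mathbb P_2}) = 0$, so $\mathbb P_2$ has \emph{no} genuine first order deformation, consistent with $(2)$ excluding it.

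The main obstacle, and the step requiring the most care, is to prove that the \emph{only} compact K\"ahler surfaces with $c_1^2(X) \geq 3c_2(X)$ and no genuine first order deformation are $\mathbb P_2$ and the ball quotients described. This forces me to handle each Enriques--Kodaira class with $c_1^2 \geq 3c_2$: by Bogomolov--Miyaoka--Yau one has $c_1^2 \le 3c_2$ for minimal surfaces of non-negative Kodaira dimension, so equality restricts to ball quotients (Kodaira dimension $2$) and to a handful of low-dimension cases; surfaces with $c_1^2 > 3c_2$ can only occur when $X$ is not minimal or has negative Kodaira dimension, and here I would use the blow-up results of Section~2 (Corollary~\ref{cor:blow-up}) to reduce to minimal models --- noting that blowing up strictly increases $h^1(\sEnd_0)$ once $h^0(T_X \otimes \Omega^1_X) < n^2 = 4$ --- together with the classification of minimal rational and ruled surfaces to show that every such surface other than $\mathbb P_2$ acquires a genuine first order deformation. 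I would carefully verify that the general-type case is already covered by \cite{CWY03}, and that the boundary equality $c_1^2 = 3c_2$ genuinely pins down ball quotients via the equality case of BMY, so that the converse $(1) \Rightarrow (2)$ follows by contraposition: if $X \simeq \mathbb P_2$ or $X$ is a ball quotient with $H^0(X,S^2\Omega^1_X) = 0$, the displayed formula shows $h^1(\sEnd_0(T_X)) = 0$ and there is no genuine deformation.
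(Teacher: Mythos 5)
Your overall strategy coincides with the paper's: reduce to minimal surfaces via the blow-up results of Section~2, dispose of $c_1^2 < 3c_2$ by Corollary~\ref{cor1}, and on the critical line use Proposition~\ref{prop:RR2}(3) to reduce the question to the vanishing of $h^0(\sEnd_0(T_X)) + h^0(S^2\Omega^1_X)$, which for $\mathbb P_2$ and ball quotients (stable, hence simple, tangent bundle) gives exactly the stated criterion. That part is fine, and the easy implication is handled correctly.

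The genuine gap is that you never actually treat the minimal K\"ahler surfaces with $c_1^2 \geq 3c_2$ that are neither $\mathbb P_2$ nor ball quotients; you defer them to ``the classification of minimal rational and ruled surfaces'' without an argument. This is where essentially all the work in the paper's proof lies. First, your claim that $c_1^2 > 3c_2$ forces $X$ non-minimal or $\mathbb P_2$ is not quite the right picture: a minimal ruled surface over a curve $B$ of genus $g \geq 2$ has $c_1^2 = 8(1-g) > 12(1-g) = 3c_2$, so it sits strictly above the BMY line, is minimal, and is not covered by any of your reductions. Here $\chi(T_X\otimes\Omega^1_X) = 4(g-1) > 0$, so no Euler-characteristic argument applies; the paper must prove the nontrivial estimate $h^0(X,T_X\otimes\Omega^1_X) > 3(g-1)$ by exhibiting the subbundle $T_{X/B}\otimes\pi^*\Omega^1_B \simeq \sO_{\bP(\sE)}(2)\otimes\pi^*(\det\sE^*\otimes\Omega^1_B)$ and applying Riemann--Roch to $S^2\sE\otimes\det\sE^*$ on $B$, plus a separate argument that the identity endomorphism is not accounted for by this subbundle. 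Second, on the critical line $c_1^2 = 3c_2 = 0$ the classification also produces tori, hyperelliptic surfaces, properly elliptic surfaces with $c_2 = 0$, and ruled surfaces over elliptic curves; the first two are quick ($H^0(S^2\Omega^1_X)\ne 0$), but the properly elliptic case needs the observation that $h^1 = q$ would force $H^0(S^2\Omega^1_X)=0$, hence $q=0$ and $\chi(\sO_X)>0$, contradicting Noether's formula with $c_1^2 = c_2 = 0$. None of these steps appear in your proposal, so as written the implication $(2)\Rightarrow(1)$ is not established.
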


As already mentioned, surfaces of general type have been treated in \cite{CWY03}.

\begin{proof}  First that if $X = \mathbb P_2$ or if $X$ is a ball quotient with $H^0(X,S^2\Omega^1_X) = 0$, then by Proposition \ref{prop:RR2}, $T_X$ has no genuine first order deformations. Hence Assertion (1) implies
Assertion (2). \\
To prove the converse, we note first that by Proposition \ref{prop:blow-up}, we may assume $X$ to be minimal and by Corollary \ref{cor1} that 
$c_1^2(X) \geq 3c_2(X) $. The Miyaoka-Yau inequality and surface classification gives $c_1^2(X) = 3c_2(X)$, unless $X$ is a ruled surface over a curve of genus at least two. More specifically,
again by classification, 
$X$ is one of the following 
\begin{enumerate}
\item $X = \mathbb P_2$;
\item $X$ is a ball quotient;
\item $\kappa (X) = 1 $ and $c_2(X) = 0$;
\item $X$ is a torus or hyperelliptic; 
\item $X$ is a ruled surface over a curve $B$ of genus $g = g(B) \geq 1$. 

\end{enumerate} 

Case (4) is immediately settled by Proposition \ref{prop:RR2}(3). \\
(3) Assume  that $\kappa (X) = 1$ and $c_2(X) = 0$. Let $f: X \to B$ be the Iitaka fibration. 
Suppose $T_X$ does not have a genuine first order deformation, then we have $ h^1(X,T_X \otimes \Omega^1_X) = q(X)$. Hence $H^0(X,S^2\Omega^1_X) = 0$ by Proposition \ref{prop:RR2}(3). 
Hence $q(X) = 0$, so that $\chi(X,\sO_X) > 0$, contradicting $c_1^2(X) = c_2(X) = 0$. \\
(5) Let $\pi: X \to B$ denote a ruling over the curve $B$. 
 Since $\chi(X,T_X \otimes \Omega^1_X) = 4(g-1)$, we may assume $g \geq 2$. 
By Proposition \ref{prop:RR2}(2), 
$$ h^1(X,T_X \otimes \Omega^1_X ) = 4(1-g) + h^0(X,T_X \otimes \Omega^1_X) + h^0(X, (\Omega^1_X)^{\otimes 2}) \geq $$
$$ \geq 4(1-g) + h^0(X,T_X \otimes \Omega^1_X) + h^0(B,2K_B) = 1-g + h^0(X,T_X \otimes \Omega^1_X).$$ 
We will now prove that 
\begin{equation} \label{eq:ruled} h^0(X,T_X \otimes \Omega^1_X) >  3(g-1). \end{equation}
This yields $$ h^1(X,T_X \otimes \Omega^1_X) >  q(X) = g,$$
which was to be proved. 
In order to show (\ref{eq:ruled}), we consider the subbundles
$$ T_X \otimes \pi^*(\Omega^1_B) \subset T_X \otimes \Omega^1_X$$ 
and $$ T_{X/B} \otimes \pi^*(\Omega^1_B) \subset T_X \otimes \pi^*(\Omega^1_B).$$
Write $X = \mathbb P(\sE)$ with a rank two bundle $\sE$ on $B$. Then 
$$ T_{X/B} \simeq \sO_{\mathbb P(\sE)}(2) \otimes \pi^*(\det \sE^*).$$
Hence 
$$ h^0( T_{X/B} \otimes \pi^*(\Omega^1_B)) = h^0(B,S^2\sE \otimes \det \sE^* \otimes \Omega^1_B) \geq $$
$$ \chi(S^2\sE \otimes \det \sE^* \otimes \Omega^1_B) = 3(g-1).$$
The last equation is Riemann-Roch for the rank three bundle $S^2(\sE) \otimes \det \sE^*$, recalling that $c_1(S^2(\sE) \otimes \det \sE^*) = 0$. 
Thus $$ h^0(X,T_X \otimes \Omega^1_X) > h^0(T_X \otimes \pi^*(\Omega^1_B)) \geq h^0( T_{X/B} \otimes \pi^*(\Omega^1_B)) \geq 3(g-1),$$
proving (\ref{eq:ruled}). The strictness of the first equality comes from the fact that the identity map in $H^0(X,T_X \otimes \Omega^1_X)$ is not induced by an element
of $H^0(X,T_{X/B} \otimes \pi^*(\Omega^1_B)$.  \\
In summary, if $T_X$ does not have a genuine deformation, then  $X$ is either $\mathbb P_2$ or a ball quotient, and in the latter case, necessarily $H^0(X,S^2\Omega^1_X) = 0 $ by Proposition \ref{prop:RR2}(3).  \\
\end{proof} 

\begin{remark} {\rm There are non-K\"ahler surfaces whose  tangent bundles have no genuine first order deformations. 
For example, let $X $ be an Inoue surface of type $S_N^+$, \cite{In74}. These are exactly the surfaces with $\kappa (X) = - \infty$, 
having no curves and $c_1^2(X) = c_2(X) = 0$. Moreover 
$$ h^0(X,T_X) = h^1(X,T_X) = 1,$$
\cite[Prop.3]{In74}. 
Let $v \in H^0(X,T_X)$ be a non-zero vector field. Then $v$ has no zeroes and induces an exact sequence
$$ 0 \to \sO_X \to T_X \to \sO_X(-K_X) \to 0.$$
From Riemann-Roch, we have $\chi(\Omega^1_X) = 0$, hence $H^1(X,\Omega^1_X)$ (since $ H^0(X,\Omega^1_X ) = H^2(X,\Omega_X^1) = 0$). 
Thus taking cohomology of the preceeding exact sequence, tensorized by $\Omega^1_X$, we obtain
$$ 0 = H^1(X,\Omega^1_X) \to H^1(X,T_X \otimes \Omega^1_X) \to H^1(X,T_X) \simeq \mathbb C, $$
and therefore
$$ h^1(X,T_X \otimes \Omega^1_X ) = 1 = h^1(X,\sO_X).$$ 
}
\end{remark} 

Theorem \ref{thm1} has the following partial strengthening

\begin{theorem} \label{thm2}
Let $X$ be a compact K\"ahler surface. Assume that $X$ is neither $\mathbb P_2$, nor a ball quotient nor of the form $\mathbb P(\sE)$ with $\sE$ 
a stable locally free sheaf of rank two over an elliptic curve. 
Then $T_X$ has a genuine non-obstructed deformation.
\end{theorem}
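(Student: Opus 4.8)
The plan is to establish the inequality $h^1(\sEnd_0(T_X)) - h^2(\sEnd_0(T_X)) > 0$ (equivalent to (\ref{eq1b})) in all cases except ruled surfaces over a base of genus $\geq 2$, and to treat the latter by a direct, fibration-based unobstructedness argument. First I would reduce to minimal surfaces: by Proposition \ref{prop:blow-up}(4),(5) the quantity $h^1(\sEnd_0) - h^2(\sEnd_0)$ never decreases under a point blow-up, and it increases strictly whenever the blown-up surface satisfies $h^0(T_X \otimes \Omega^1_X) < 4$. Hence if the minimal model $X_0$ of $X$ already satisfies $h^1(\sEnd_0(T_{X_0})) - h^2(\sEnd_0(T_{X_0})) > 0$ we are done, while if $X_0$ is a boundary surface with difference $0$ but simple tangent bundle (so $h^0(T_{X_0} \otimes \Omega^1_{X_0}) = 1 < 4$), then any nontrivial blow-up already yields a strict inequality. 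By the Bogomolov--Miyaoka--Yau inequality and the Enriques--Kodaira classification, a minimal K\"ahler $X_0$ falls into the trichotomy $c_1^2 < 3c_2$, $c_1^2 = 3c_2$, or $c_1^2 > 3c_2$, the last occurring only for ruled surfaces over a base of genus $\geq 2$.

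When $c_1^2 < 3c_2$, Corollary \ref{cor1} gives the conclusion at once. When $c_1^2 = 3c_2$, Proposition \ref{prop:RR2}(4) collapses to $h^1(\sEnd_0) - h^2(\sEnd_0) = h^0(\sEnd_0(T_X))$, so everything reduces to showing that $T_X$ is not simple. Excluding $\mathbb P_2$ and ball quotients, the classification leaves tori and bielliptic surfaces, properly elliptic surfaces with $c_2 = 0$, and ruled surfaces over an elliptic curve. For tori and bielliptic surfaces $T_X$ is flat with finite abelian structure group, so it decomposes into flat line subbundles and the diagonal endomorphism $\mathrm{diag}(1,-1)$ gives $h^0(\sEnd_0) \geq 1$. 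In the remaining two cases I would use the fibration $f\colon X \to B$ and its relative tangent sequence $0 \to T_{X/B} \to T_X \to f^*T_B \to 0$ to build the nilpotent traceless endomorphism $T_X \twoheadrightarrow f^*T_B \xrightarrow{\,s\,} T_{X/B} \hookrightarrow T_X$, which is nonzero precisely when $\Hom(f^*T_B, T_{X/B}) = H^0(B, (f_*T_{X/B}) \otimes K_B) \neq 0$. For a smooth elliptic fibration with $c_2 = 0$ one has $f_*T_{X/B} = \lambda^{-1}$ with $\deg \lambda = \chi(\sO_X) = 0$, so $(f_*T_{X/B}) \otimes K_B$ has positive degree and sections; for $X = \mathbb P(\sE)$ over an elliptic curve one has $f_*T_{X/B} = \sEnd_0(\sE)$ and $K_B = \sO_B$, so the space of such endomorphisms is $H^0(B, \sEnd_0(\sE))$, which vanishes exactly when $\sE$ is simple, i.e.\ stable. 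This is the unique case where $T_X$ is simple and the criterion fails, accounting precisely for the stated exception.

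For ruled surfaces over a base $B$ of genus $g \geq 2$ (and their blow-ups), Proposition \ref{prop:RR2}(4) gives $h^1(\sEnd_0) - h^2(\sEnd_0) = h^0(\sEnd_0(T_X)) - 5(g-1)$, and a section count through the relative tangent filtration shows this is negative when $\sE$ is stable; thus the numerical sufficient condition is unavailable and a deformation must be produced by hand. Here I would exploit that the ruling, composed with any blow-down, is a morphism $f\colon X \to B$ onto a curve with one-dimensional fibers, so $R^{\geq 2}f_*\sEnd_0(T_X) = 0$ and the Leray spectral sequence yields $H^2(X, \sEnd_0(T_X)) = H^1(B, R^1 f_*\sEnd_0(T_X))$, while the horizontal first-order deformations form the subspace $H^1(B, f_*\sEnd_0(T_X)) \subset H^1(X, \sEnd_0(T_X))$. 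The crucial point is that the obstruction to extending a horizontal class is its cup square, which lands in $H^2(B, f_*\sEnd_0(T_X)) = 0$ because $B$ is a curve; hence every horizontal class is automatically unobstructed. It then remains to produce a horizontal class that is genuine, i.e.\ to show that $H^1(B, f_*\sEnd_0(T_X))$ strictly exceeds the line-bundle part coming from $H^1(B, \sO_B)$, which I would extract from the positive-dimensional variation of $\sE$ on $B$.

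The main obstacle is this last step: for ruled surfaces over a base of higher genus the Euler-characteristic count runs the wrong way, so unobstructedness cannot be read off from $h^1 - h^2$ and must instead come from the vanishing of the curve-level obstruction space together with a non-vacuous supply of horizontal, non-twist deformations. Isolating the stable elliptic case as a genuine exception --- rather than an artifact of the estimates --- is the other delicate point, and the identity $f_*T_{X/B} = \sEnd_0(\sE)$ together with $h^0(B, \sEnd_0(\sE)) = 0 \Leftrightarrow \sE$ stable is what pins it down.
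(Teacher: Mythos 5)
Your overall architecture coincides with the paper's: reduce to minimal surfaces via Proposition \ref{prop:blow-up}, dispose of $c_1^2<3c_2$ by Corollary \ref{cor1}, reduce the boundary case $c_1^2=3c_2$ via Proposition \ref{prop:RR2}(4) to showing $T_X$ is not simple, and isolate the stable-elliptic exception through the identification $p_*T_{X/B}\simeq S^2\sE\otimes\det\sE^*\simeq\sEnd_0(\sE)$. However, the two hardest cases are not actually established, and in both the proposed mechanism differs from what the paper does in a way that matters.

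First, the properly elliptic case with $c_2=0$. You assume the Iitaka fibration is smooth and argue that $(f_*T_{X/B})\otimes K_B$ ``has positive degree and sections.'' Its degree is $2g(B)-2$, which is not positive when $g(B)\leq 1$; and for $g(B)\leq 1$ the fibration necessarily has multiple fibers (otherwise $\kappa(X)\ne 1$), so the smoothness assumption excludes exactly the surfaces that remain. The paper's proof spends essentially all of its effort here: it works with the divisor $D=\sum(m_i-1)F_i$ and the torsion bundle $\sL$ from the canonical bundle formula, reduces to $H^0(B,K_B\otimes\sL^*)\ne 0$, and when $g(B)=1$ with $\sL$ nontrivial performs a finite \'etale base change trivializing $\sL$ and then descends the resulting endomorphism of $\Omega^1_{\tilde X}$ back to $X$. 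None of this is replaced by your degree count.

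Second, the ruled surfaces over a base of genus $g\geq 2$. Your key step is that a class $\xi\in H^1(B,f_*\sEnd_0(T_X))\subset F^1H^1$ has cup square in $F^2H^2$, which vanishes because $H^2(B,\cdot)=0$ on a curve. That kills only the \emph{primary} obstruction of a single first-order class; it does not show that $\xi$ integrates to a positive-dimensional deformation, which is what ``non-obstructed'' means here. Moreover the final ingredient --- a horizontal class that is genuine rather than a twist by a deformation of $\sO_X$ --- is asserted, not produced. The paper avoids both issues by deforming the extension class $\zeta$ of $0\to T_{X/B}\to T_X\to p^*T_B\to 0$ inside the vector space $\Ext^1(p^*T_B,T_{X/B})\simeq H^1(X,-K_X)$, which is nonzero precisely outside the stable-elliptic exception: this is an honest family over a smooth parameter space, hence unobstructed by construction, and $\Hom(T_{X/B},p^*T_B)=0$ forces $\sF_t\not\simeq T_X$ for $t\ne 0$. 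Your Leray/cup-product heuristic would need to be upgraded to a statement of this kind (formal smoothness of the functor of extension-preserving deformations) to close the argument; as written it is a genuine gap.
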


\begin{proof} By Proposition \ref{prop:blow-up}, we may assume $X$ minimal. 
What remains to be proved is the following. Suppose $X$ is one of the following.
\begin{enumerate} 
\item $\kappa (X) = 1$ and $c_2(X) = 0$
\item $X$ is a torus or hyperelliptic
\item $X$ is a ruled surface over a curve $B$ of genus $g = g(B) \geq 1$, but not of the form $\mathbb P(\sE)$ with $\sE$ a stable locally free sheaf of rank two
over an elliptic curve. 
\end{enumerate}
Then $T_X$  has a non-trivial non-obstructed deformation. \\
(1) Assume first that $\kappa (X) = 1$. By Proposition \ref{prop:RR2}(4), it suffices to show that 
$$ h^0(X,T_X \otimes \Omega^1_X) \geq 2.$$ 
To do this, consider the Iitaka fibration $f: X \to B$.  Since $c_2(X) = 0$, the only singular fiber of $f$ are multiples
$m_i F_i$ of elliptic curves $F_i$; write $D = \sum (m_i-1)F_i$.
The elliptic bundle formula now reads
$$ K_X = f^*(K_B \otimes \sL) \otimes \sO_X(D) $$
with a torsion line bundle $\sL$. 
Further,
there is an exact sequence
$$ 0 \to f^*(K_B) \otimes \sO_X(D) \to \Omega^1_X \to K_{X/B} \otimes \sO_X(-D) \to 0,$$ 
and therefore an inclusion
$$ T_X \otimes f^*K_B \otimes \sO_X(D) \subset T_X \otimes \Omega^1_X.$$
Thus it suffices to show 
$$ H^0(X,T_X \otimes f^*K_B \otimes \sO_X(D)) \ne 0.$$
Indeed, a non-zero element in the space is a morphism $\Omega^1_X \to f^*K_B \otimes \sO_X(D)$.
Composing with the inclusion $f^*K_B \otimes \sO_X(D) \to \Omega^1_X$ yields a morphism $\Omega^1_X \to \Omega^1_X$ which is not
a multiple of ${\rm id}$. 
Dualizing the last exact sequence yields an inclusion 
$$ f^*(K_B \otimes \sL^*) \otimes \sO_X(D) \simeq  K_{X/B}^* \otimes \sO_X(2D) \otimes f^*(K_B) \to T_X \otimes f^*K_B \otimes \sO_X(D).$$
Now $$ H^0(X,f^*(K_B \otimes \sL^*) \otimes \sO_X(D)) = H^0(B, K_B \otimes \sL^*) \ne 0,$$
unless $g = 1$ and $\sL$ not trivial, we are done except for this special case. Here we perform a finite \'etale base change $\tilde B \to B$  to trivialize $\sL$  and set $\tilde X = 
X \times_B \tilde B$ with projection $\tilde f: \tilde X \to \tilde B$. Then the associated line bundle $\tilde \sL$ and therefore 
$$ h^0(\tilde X, T_{\tilde X} \otimes \Omega^1_{\tilde X}) \geq 2.$$
Thus there exists a morphism 
$$ \lambda: \Omega^1_{\tilde X} \to \Omega^1_{\tilde X},$$
which is not a multiple of the identity. Let $\mu: \tilde X \to X$ be the projection and consider
$$ \mu_*(\lambda): \mu_*(\Omega^1_{\tilde X} ) \to \mu_*(\Omega^1_{\tilde X}).$$ Via  the decomposition
$$ \mu_*(\Omega^1_{\tilde X}) = \mu_*\mu^*(\Omega^1_X) = \Omega^1_X \otimes \mu_*(\sO_{\tilde X}) = \Omega^1_X \otimes \bigoplus_{j=0}^{-s} \sL^j,$$ 
there exists a number $k$ and a non-zero morphism
$$ \psi:  \Omega^1_X  \to \Omega^1_X \otimes \sL^k.$$ 
We aim to prove that $k = 0$; hence we obtain an endomorphism of $\Omega^1_X$ which is not a multiple of the identity and conclude. 
Using the cotangent sequence, which now reads
$$ 0 \to \sO_X(D) \otimes \Omega^1_X \to K_X \otimes \sO_X(-D) \to 0,$$ 
the morphism $\psi$ induces by composition a morphism 
$$ \psi_1:  \sO_X(D) \to K_X \otimes \sO_X(-D) \otimes f^*(\sL^k) = f^*(\sL \otimes \sL^k). $$
If $\psi_1 \ne 0$, then $k = -1$ and $D = 0$, a contradiction. Therefore $\psi_1$ and $\psi$ induces a nonvanishing map 
$$ \sO_X(D) \to \sO_X(D) \otimes f^*(\sL^k),$$
hence $k = 0$. 
\vskip .2cm \noindent 
(2) If $X$ is a torus or hyperelliptic, then 
$$ h^0(X,T_X \otimes \Omega^1_X) = h^0(X, \Omega_X^1 \otimes \Omega^1_X) = h^0(X,S^2\Omega^1_X) + h^0(X,K_X) \geq 2,$$
hence we conclude again by Proposition \ref{prop:RR2}(4). 
\vskip .2cm \noindent 
(3) Finally, let $p: X \to B $ be a ruled surface over a curve $B$ of genus $g = g(B) \geq 1$. 
Consider the relative tangent bundle sequence
$$ 0 \to T_{X/B} \to T_X \to p^*(T_B) \to 0$$
and its associated extension class $\zeta \in H^1(X,T_{T/B} \otimes p^*(T^*_B)) \simeq H^1(X,-K_X)$. 
Now $H^1(X,-K_X) \ne 0$, unless $ g = 1$ and $X = \mathbb P(\sE)$ with $\sE$ a stable locally free sheaf of rank two on $B$. 
This is a direct consequence of the structure results of  ruled surfaces, \cite[chap. V.2]{Ha77}. 
The latter case ruled out by assumption, 
we can deform the extension class $\zeta$ and obtain a deformation $\sF$ of $T_X $ over $X \times \Delta$. 
Every $\sF_t$ sits in an exact sequence
$$ 0 \to T_{X/B} \to \sF_t \to p^*(T_B) \to p^*(T_B) \to 0.$$ 
Since there are no non-trivial maps $T_{X/B} \to p^*(T_B)$, the sheaves $\sF_t$ are different from $T_X$ for $t \ne 0$,
and we obtain a non-trivial positive-dimensional deformation of $T_X$. \\
It remains to treat the case $X = \mathbb P(\sE)$ with $\sE$ stable over the elliptic curve $B$. 

\end{proof} 

\begin{remark} {\rm Assume that $X = \mathbb P(\sE)$ with $\sE$ a stable locally free sheaf of rank two over an elliptic curve or that 
$X$ is a ball quotient with $H^0(X,S^2\Omega^1_X) \ne 0$. Then $T_X$ has a genuine first order deformation and one might suspect 
that a suitable such deformation is not obstructed. Then Theorem \ref{thm2} could be stated as follows: 
$T_X$ has a genuine non-obstructed deformation if and only if $X$ is neither $\mathbb P_2$ nor a ball quotient with $H^0(X,S^2\Omega^1_X) = 0$. 
}
\end{remark} 

\section{Threefolds} 
\setcounter{lemma}{0}

The Riemann-Roch formula gives, using $c_2(T_X \otimes \Omega^1_X) = -2c_1^2(X) + 6c_2(X)$,  

\begin{proposition}  \label{BL} Let $X$ be a $3$-dimensional compact complex manifold. 
Then $$\chi(X,T_X \otimes \Omega^1_X) = c_1^3(X) - \frac{63}{24}c_1(X) c_2(X).$$
\end{proposition}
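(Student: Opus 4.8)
The plan is to read $\chi(X, T_X \otimes \Omega^1_X)$ directly off the Hirzebruch--Riemann--Roch theorem, which holds for an arbitrary compact complex threefold, so that $\chi(X, \mathcal{E}) = \int_X {\rm ch}(\mathcal{E})\,{\rm td}(X)$ for $\mathcal{E} = T_X \otimes \Omega^1_X$; concretely one extracts the degree-$3$ component of the product. Everything then reduces to computing the Chern character of $\mathcal{E}$ and pairing it with the Todd class of $X$.

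First I would compute ${\rm ch}(\mathcal{E})$. Since $\mathcal{E} = T_X \otimes T_X^*$, multiplicativity of the Chern character gives ${\rm ch}(\mathcal{E}) = {\rm ch}(T_X)\cdot {\rm ch}(\Omega^1_X)$. Writing ${\rm ch}(T_X) = 3 + a_1 + a_2 + a_3$ with $a_1 = c_1$, $a_2 = \tfrac{1}{2}c_1^2 - c_2$ and $a_3 = \tfrac16 c_1^3 - \tfrac12 c_1 c_2 + \tfrac12 c_3$ (all $c_i = c_i(T_X)$), one has ${\rm ch}(\Omega^1_X) = 3 - a_1 + a_2 - a_3$, since dualizing flips the sign of the odd-degree parts. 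Multiplying the two, the odd-degree terms cancel in pairs: the degree-$1$ part is $3a_1 - 3a_1 = 0$ and the degree-$3$ part is $3a_3 + a_1a_2 - a_2 a_1 - 3a_3 = 0$. The surviving components are ${\rm ch}_0(\mathcal{E}) = 9$ and ${\rm ch}_2(\mathcal{E}) = 6a_2 - a_1^2 = 2c_1^2 - 6c_2$, which is consistent with the value $c_2(\mathcal{E}) = -2c_1^2 + 6c_2$ recorded in the excerpt, as $c_1(\mathcal{E}) = 0$ forces ${\rm ch}_2(\mathcal{E}) = -c_2(\mathcal{E})$.

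Then I would pair with ${\rm td}(X) = 1 + \tfrac12 c_1 + \tfrac{1}{12}(c_1^2 + c_2) + \tfrac{1}{24}c_1 c_2 + \cdots$ and keep only the degree-$3$ term. Because ${\rm ch}_1(\mathcal{E}) = {\rm ch}_3(\mathcal{E}) = 0$, only two contributions survive, namely ${\rm ch}_0(\mathcal{E})\cdot {\rm td}_3$ and ${\rm ch}_2(\mathcal{E})\cdot {\rm td}_1$, giving
$$ \chi(X, \mathcal{E}) = 9 \cdot \tfrac{1}{24} c_1 c_2 + (2c_1^2 - 6c_2)\cdot \tfrac12 c_1 = \tfrac{9}{24}c_1 c_2 + c_1^3 - 3 c_1 c_2 = c_1^3 - \tfrac{63}{24} c_1 c_2, $$
which is the asserted formula.

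There is no genuine obstacle here: the statement is a Riemann--Roch bookkeeping exercise. The only point demanding care is combining the rank contribution $9\cdot\tfrac{1}{24}c_1 c_2$ coming from the degree-$3$ Todd coefficient with the $-3 c_1 c_2$ produced by ${\rm ch}_2(\mathcal{E})\cdot \tfrac12 c_1$, whose sum yields the slightly unusual constant $\tfrac{63}{24}$. The vanishing of ${\rm ch}_1(\mathcal{E})$ and ${\rm ch}_3(\mathcal{E})$, reflecting that $\mathcal{E}$ is an endomorphism bundle, is precisely what makes the final expression so short.
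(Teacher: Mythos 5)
Your computation is correct and follows exactly the route the paper takes: the paper's proof is the one-line invocation of Riemann--Roch using $c_1(T_X\otimes\Omega^1_X)=0$ and $c_2(T_X \otimes \Omega^1_X) = -2c_1^2(X) + 6c_2(X)$, which is precisely the data you extract from ${\rm ch}(\mathcal{E})$ before pairing with the Todd class. Your version merely writes out the Chern-character bookkeeping that the paper leaves implicit, and the arithmetic checks out.
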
 

\subsection{Fano threefolds}

\begin{proposition} \label{prop:Fano} Let $X$ be a Fano threefold. Then $T_X$ has a genuine non-obstructed deformation unless $X = \mathbb P_3$. 
\end{proposition}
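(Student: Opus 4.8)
The plan is to reduce the non-obstructedness criterion (\ref{eq1b}) to a single numerical inequality and then invoke the sharp bound on the anticanonical degree of Fano threefolds. Since $X$ is Fano, Kodaira vanishing gives $H^i(X,\sO_X) = 0$ for $i > 0$; in particular $q(X) = h^2(X,\sO_X) = 0$, so condition (\ref{eq1b}) reduces to showing
$$ h^1(X, T_X \otimes \Omega^1_X) - h^2(X,T_X\otimes\Omega^1_X) > 0 $$
whenever $X \not\simeq \mathbb P_3$. Writing $\chi = \chi(X,T_X\otimes\Omega^1_X)$ and using $\chi = h^0 - h^1 + h^2 - h^3$, this is equivalent to
$$ h^0(X,T_X\otimes\Omega^1_X) - h^3(X,T_X\otimes\Omega^1_X) - \chi > 0. $$

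Next I would pin down the three terms on the left. First, $\chi(X,\sO_X) = \tfrac{1}{24}c_1(X)c_2(X) = 1$ forces $c_1(X)c_2(X) = 24$, so Proposition \ref{BL} gives $\chi = c_1^3(X) - 63$. Second, I claim $h^3(X,T_X\otimes\Omega^1_X) = 0$. By Serre duality this equals $h^0(X, T_X \otimes \Omega^1_X \otimes K_X)$, and the canonical isomorphism $T_X \otimes K_X \simeq \Omega^2_X$ on a threefold identifies $T_X \otimes \Omega^1_X \otimes K_X$ with $\Omega^1_X \otimes \Omega^2_X$, which in characteristic zero is a direct summand of $(\Omega^1_X)^{\otimes 3}$. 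Since a Fano manifold is rationally connected, $H^0(X,(\Omega^1_X)^{\otimes m}) = 0$ for all $m \geq 1$, whence the vanishing. Third, $h^0(X,T_X\otimes\Omega^1_X) = h^0(X,\sEnd(T_X)) \geq 1$, the identity endomorphism providing a nonzero section.

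Putting these together yields
$$ h^1(X,T_X\otimes\Omega^1_X) - h^2(X,T_X\otimes\Omega^1_X) = h^0(X,\sEnd(T_X)) + 63 - c_1^3(X) \geq 64 - c_1^3(X). $$
The final step is to invoke the classical fact, coming from the classification of Fano threefolds, that $(-K_X)^3 = c_1^3(X) \leq 64$ with equality if and only if $X \simeq \mathbb P_3$. For $X \not\simeq \mathbb P_3$ the integer $c_1^3(X)$ is then at most $63$, so the difference above is at least $1$ and $T_X$ has a genuine non-obstructed deformation; for $\mathbb P_3$ itself the same computation gives $0$, consistent with the rigidity of $T_{\mathbb P_3}$. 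The main obstacle is not any individual cohomological step but the dependence on the sharp degree bound together with its characterization of $\mathbb P_3$; the rest is Riemann--Roch, Serre duality, and the rational-connectedness vanishing of tensor differentials.
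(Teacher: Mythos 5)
Your argument is correct and is essentially identical to the paper's proof: both reduce the criterion to $h^1-h^2=h^0-h^3-\chi>0$, compute $\chi(X,T_X\otimes\Omega^1_X)=c_1^3(X)-63$ from $c_1c_2=24$, kill $h^3=h^0(X,\Omega^1_X\otimes\Omega^2_X)$ by rational connectedness, use $h^0\geq 1$, and finish with the degree bound from the classification of Fano threefolds. The only cosmetic difference is that the paper quotes the sharper bound $c_1^3(X)\leq 62$ for $X\not\simeq\mathbb P_3$, whereas you use $c_1^3(X)\leq 63$, which suffices equally well.
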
 

\begin{proof} Since $\chi(X,\sO_X) = 1$, we have $c_1(X) c_2(X) = 24$ by Riemann-Roch. Hence 
$$ \chi(X,T_X \otimes \Omega^1_X) = c_1^3(X) - 63.$$ 
By the classification of Fano threefolds, $c_1^3(X) \leq 62$, unless $X = \mathbb P_3$. 
Notice further that 
$$ H^3(X,T_X \otimes \Omega^1_X) = H^0(X, \Omega^1_X \otimes \Omega^2_X) = 0,$$
e.g., since $X$ is rationally connected. 

Hence $$ h^1(X,T_X \otimes \Omega^1_X) - h^2(X,T_X \otimes \Omega^1_X)  > 0,$$
unless $X = \mathbb P_3$. 
Since $q(X) = 0$, this proves the claim. 

\end{proof} 

The arguments actually show more (having in mind that $h^0(X,T_X \otimes \Omega^1_X) \ne 0$)

\begin{corollary} \label{cor2} Let $X$ be a smooth threefold  with $\chi(X,\sO_X) \geq 0$ and 
$$H^0(X,\Omega^1_X) = H^0(X,\Omega^2_X) = H^0(X,\Omega^1_X \otimes \Omega^2_X) = 0.$$ Assume that
$$ c_1^3(X) \leq 63. $$ 
Then  $T_X$ has a genuine non-obstructed deformation.
\end{corollary}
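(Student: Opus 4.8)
The plan is to run exactly the argument of Proposition \ref{prop:Fano}, replacing the Fano-specific input $\chi(X,\sO_X)=1$ by the hypothesis $\chi(X,\sO_X)\ge 0$ and feeding in the three vanishing assumptions by hand. By the criterion (\ref{eq1b}), it suffices to prove
$$ h^1(X,T_X\otimes\Omega^1_X) - h^2(X,T_X\otimes\Omega^1_X) - q(X) + h^2(X,\sO_X) > 0. $$
First I would dispose of the top cohomology. By Serre duality together with the isomorphism $T_X\otimes\Omega^1_X\otimes K_X\simeq\Omega^1_X\otimes\Omega^2_X$ (which uses $T_X\otimes K_X\simeq\Omega^2_X$ on a threefold and the self-duality of $T_X\otimes\Omega^1_X$) one gets
$$ H^3(X,T_X\otimes\Omega^1_X)\simeq H^0(X,\Omega^1_X\otimes\Omega^2_X)^\vee = 0 $$
by hypothesis. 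The same hypothesis will be used a second time: via the splitting $\Omega^1_X\otimes\Omega^2_X\simeq(\sEnd_0(T_X)\otimes K_X)\oplus K_X$ it also forces $p_g(X):=h^0(X,K_X)=0$, a fact I will need below.

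Next I combine Euler characteristics. Proposition \ref{BL} together with the threefold identity $\chi(X,\sO_X)=\tfrac1{24}c_1(X)c_2(X)$ gives
$$ \chi(X,T_X\otimes\Omega^1_X)=c_1^3(X)-63\,\chi(X,\sO_X), $$
exactly as in the Fano case. Since $H^3(X,T_X\otimes\Omega^1_X)=0$, the Euler characteristic reads $\chi=h^0-(h^1-h^2)$, whence
$$ h^1-h^2 = h^0(X,T_X\otimes\Omega^1_X)-c_1^3(X)+63\,\chi(X,\sO_X). $$
Finally, $p_g(X)=0$ gives $\chi(X,\sO_X)=1-q(X)+h^2(X,\sO_X)$, i.e. $-q(X)+h^2(X,\sO_X)=\chi(X,\sO_X)-1$. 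Substituting,
$$ h^1-h^2-q(X)+h^2(X,\sO_X) = \bigl(h^0(X,T_X\otimes\Omega^1_X)-1\bigr)+\bigl(64\,\chi(X,\sO_X)-c_1^3(X)\bigr). $$

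The conclusion then follows from positivity of the two bracketed terms. The first is $\ge 0$ because the identity endomorphism gives $h^0(X,T_X\otimes\Omega^1_X)\ge 1$ (the point highlighted in the preceding remark), and the second is $\ge 64-63=1>0$ once $\chi(X,\sO_X)\ge 1$ and $c_1^3(X)\le 63$. The numerical margin is sharp: $\mathbb P_3$ has $c_1^3=64$, which is precisely why it must be excluded. The step I expect to be the genuine obstacle is the boundary case $\chi(X,\sO_X)=0$. Here $c_1(X)c_2(X)=0$ and the estimate degenerates to the requirement $h^0(X,\sEnd_0(T_X))>c_1^3(X)$, which is not formal; one must either show this case is incompatible with the standing vanishing hypotheses or otherwise produce enough traceless endomorphisms. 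Everything outside this degenerate case is a direct consequence of Riemann-Roch and Serre duality, so the main effort should be concentrated on ruling out, or separately handling, $\chi(X,\sO_X)=0$.
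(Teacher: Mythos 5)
Your argument is the paper's own: Riemann--Roch gives $\chi(X,T_X\otimes\Omega^1_X)=c_1^3(X)-63\,\chi(X,\sO_X)$, Serre duality plus the hypothesis $H^0(X,\Omega^1_X\otimes\Omega^2_X)=0$ kills $H^3(X,T_X\otimes\Omega^1_X)$, and the identity endomorphism supplies $h^0(X,T_X\otimes\Omega^1_X)\geq 1$; this is exactly how the author upgrades Proposition \ref{prop:Fano}. Your extra observation that the splitting $\Omega^1_X\otimes\Omega^2_X\simeq(\sEnd_0(T_X)\otimes K_X)\oplus K_X$ forces $p_g(X)=0$ is correct and useful.

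The one loose end you flag --- the boundary case $\chi(X,\sO_X)=0$ --- is in fact vacuous, and you already have all the pieces to see this. In the intended setting $X$ is projective (the corollary sits between Fano and rationally connected threefolds), so Hodge symmetry converts the hypotheses $H^0(X,\Omega^1_X)=H^0(X,\Omega^2_X)=0$ into $q(X)=h^2(X,\sO_X)=0$; combined with your own deduction $p_g(X)=h^0(X,K_X)=0$ from the third vanishing, this gives
$$ \chi(X,\sO_X)=1-q(X)+h^2(X,\sO_X)-p_g(X)=1. $$
So the hypothesis $\chi(X,\sO_X)\geq 0$ is actually redundant here, the degenerate case $\chi(X,\sO_X)=0$ never arises, and your final display reduces to $\bigl(h^0(X,T_X\otimes\Omega^1_X)-1\bigr)+\bigl(64-c_1^3(X)\bigr)\geq 1>0$. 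No separate argument producing traceless endomorphisms is needed. (Only if one insisted on reading ``smooth threefold'' as an arbitrary compact complex threefold, where Hodge symmetry is unavailable, would your worry be substantive --- but then the statement itself would need $\chi(X,\sO_X)\geq 1$ rather than $\geq 0$.)
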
 

If $X$ is rationally connected, then the first two conditions in Corollary \ref{cor2} are satisified, hence we obtain

\begin{corollary} \label{cor3} Let $X$ be a smooth rationally connected threefold.  Assume that
$$ c_1^3(X) \leq 63. $$ 
Then  $T_X$ has a non-trivial non-obstructed deformation.
\end{corollary}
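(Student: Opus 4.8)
The plan is to deduce this immediately from Corollary \ref{cor2}: it suffices to verify that every smooth rationally connected threefold $X$ satisfies the hypotheses of that corollary, namely $\chi(X,\sO_X) \geq 0$ together with the three vanishing conditions
$$ H^0(X,\Omega^1_X) = H^0(X,\Omega^2_X) = H^0(X,\Omega^1_X \otimes \Omega^2_X) = 0, $$
the inequality $c_1^3(X) \leq 63$ being assumed.

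First I would invoke the fundamental fact that a smooth projective rationally connected variety carries no nonzero global holomorphic tensor differentials: by the theorem of Koll\'ar--Miyaoka--Mori one has $H^0(X,(\Omega^1_X)^{\otimes m}) = 0$ for every $m \geq 1$. Since we work in characteristic zero, each of the sheaves $\Omega^1_X$, $\Omega^2_X = \wedge^2\Omega^1_X$ and $\Omega^1_X \otimes \Omega^2_X$ is a direct summand of a suitable tensor power $(\Omega^1_X)^{\otimes m}$ (via antisymmetrization, i.e. the idempotents of the symmetric group in the rational group algebra). Hence all three groups $H^0(X,\Omega^1_X)$, $H^0(X,\Omega^2_X)$ and $H^0(X,\Omega^1_X \otimes \Omega^2_X)$ vanish, which are precisely the three form-vanishing hypotheses of Corollary \ref{cor2}.

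Next, since $X$ is projective and hence K\"ahler, Hodge symmetry gives $h^i(X,\sO_X) = h^0(X,\Omega^i_X) = 0$ for all $i \geq 1$, so that $\chi(X,\sO_X) = h^0(X,\sO_X) = 1 \geq 0$. With $c_1^3(X) \leq 63$ given, all the hypotheses of Corollary \ref{cor2} are met, and therefore $T_X$ has a genuine non-obstructed deformation. Finally, because $q(X) = h^1(X,\sO_X) = 0$, the only deformation of $\sO_X$ over the double point is the trivial one, so $p^*T_X \otimes \sL$ reduces to $p^*T_X$; thus here the notions \emph{genuine} and \emph{non-trivial} coincide, which yields exactly the stated conclusion.

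I do not expect any genuine obstacle in this argument: its entire content lies in correctly citing the vanishing of global holomorphic tensors on rationally connected varieties. That input is what makes the three form-vanishing hypotheses automatic and what reduces the statement to the already-established Corollary \ref{cor2}; everything else is the bookkeeping of Hodge symmetry and the remark that $q(X)=0$ collapses ``genuine'' to ``non-trivial''.
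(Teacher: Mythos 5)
Your proposal is correct and follows essentially the same route as the paper, which likewise derives Corollary \ref{cor3} directly from Corollary \ref{cor2} by noting that rational connectedness forces the vanishing of the relevant spaces of global tensor forms (via Koll\'ar--Miyaoka--Mori). The extra details you supply --- the summand argument for $\Omega^1_X \otimes \Omega^2_X$ inside $(\Omega^1_X)^{\otimes 3}$, Hodge symmetry giving $\chi(X,\sO_X)=1$, and the observation that $q(X)=0$ identifies ``genuine'' with ``non-trivial'' --- are exactly the bookkeeping the paper leaves implicit.
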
 

In view of Proposition \ref{prop:Fano} it is natural to consider the case that $X$ is "weak Fano", i.e., $-K_X$ is big and nef. 
In that case, $c_1^3(X) \leq 72$ by \cite[Thm.1.5]{Pro05}. In fact, for a suitable positive integer $m$ the line bundle $-mK_X$ is spanned by global sections and
defines a birational morphism $\varphi: X \to Y$ to a Fano Gorenstein variety $Y$ with at most canonical singularities such that $-K_X = \varphi^*(-K_Y).$ 
By \cite[Thm.1.5]{Pro05}, $(-K_Y)^3 \leq 72$, hence $c_1^3(X) = (-K_X)^3 \leq 72$. The bound $72$ is sharp; actually $(-K_Y)^3 = 72 $ if and only if $Y$ is either 
the weighted projective space $\mathbb P(3,1,1,1)$ or $\mathbb P(6,4,1,1).$ Thus we cannot conclude directly that $T_X$ has a first order or non-obstructed deformation. It should however be
possible to classifiy all $X$ in the range $64 \leq c_1^3(X) \leq 72$ and treat this cases by hand. We give one example, namely 
$$X = \mathbb P(\sO_{\mathbb P_2} \oplus \sO_{\mathbb P_2}(3)).$$
In this case $Y = \mathbb P(3,1,1,1).$ For simplicity, we consider only first order deformations.

\begin{proposition} \label{weak} Let $X$ be the weak Fano threefold $ \mathbb P(\sO_{\mathbb P_2} \oplus \sO_{\mathbb P_2}(3))$.
Then $T_X$ has a genuine first order deformation. 

\end{proposition}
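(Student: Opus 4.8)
The plan is to prove that $h^1(X,T_X\otimes\Omega^1_X)>q(X)=0$ by exhibiting a non-trivial element of $H^1(X,T_X\otimes\Omega^1_X)\cong H^1(X,\sEnd_0(T_X))\oplus H^1(X,\sO_X)$, equivalently a non-trivial class in $H^1(X,\sEnd_0(T_X))$. Since $X=\mathbb P(\sO_{\mathbb P_2}\oplus\sO_{\mathbb P_2}(3))$ is a $\mathbb P_1$-bundle over $\mathbb P_2$, the natural first step is to set $\pi:X\to\mathbb P_2$ and work with the relative tangent sequence
$$ 0 \to T_{X/\mathbb P_2} \to T_X \to \pi^*T_{\mathbb P_2} \to 0. $$
I would compute the cohomology of $T_X\otimes\Omega^1_X$ by pushing forward to $\mathbb P_2$ and applying the Leray spectral sequence, using that $\pi_*\sO_X$ and the higher direct images $R^\bullet\pi_*$ of line bundles on a $\mathbb P_1$-bundle are controlled by the relative Euler sequence. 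Since $\sF=\sO_{\mathbb P_2}\oplus\sO_{\mathbb P_2}(3)$ is \emph{decomposable}, every relevant sheaf decomposes into a direct sum of line bundles pulled back from $\mathbb P_2$ tensored with powers of $\sO_X(1)=\sO_{\mathbb P(\sF)}(1)$, so the push-forwards reduce to cohomology of explicit line bundles $\sO_{\mathbb P_2}(k)$ on $\mathbb P_2$, which are classical.

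The cleanest route, however, is to exploit the decomposability directly at the level of the tangent bundle. Writing $T_{X/\mathbb P_2}\simeq \sO_X(2)\otimes\pi^*(\det\sF^*)=\sO_X(2)\otimes\pi^*\sO_{\mathbb P_2}(-3)$ and $\Omega^1_{X/\mathbb P_2}$ its dual, I would produce explicit homomorphisms in $T_X\otimes\Omega^1_X=\sEnd(T_X)$ that are \emph{not} scalar multiples of the identity, using the splitting $\sF=\sO\oplus\sO(3)$ to build block-lower-triangular (or block-upper-triangular) endomorphisms. Concretely, the two summands of $\sF$ give two distinguished sections of $\sO_X(1)\otimes\pi^*\sO_{\mathbb P_2}$ and $\sO_X(1)\otimes\pi^*\sO_{\mathbb P_2}(-3)$ respectively; the failure of these to be proportional is exactly the failure of $T_X$ to be simple, and the point is to count $h^0(X,\sEnd_0(T_X))$ and combine with the Euler-characteristic formula of Proposition~\ref{BL}. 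With $c_1(X)c_2(X)=24$ replaced by the actual intersection numbers of $X$ (computed from $c_1(T_X)=2\,\sO_X(1)-\pi^*\sO_{\mathbb P_2}(3)+3\pi^*\sO_{\mathbb P_2}(1)$ and the relation $\sO_X(1)^2=\sO_X(1)\cdot\pi^*c_1(\sF)$), one evaluates $\chi(X,T_X\otimes\Omega^1_X)=c_1^3(X)-\tfrac{63}{24}c_1(X)c_2(X)$.

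The main obstacle I anticipate is \emph{not} producing a non-trivial first-order deformation as such — decomposability makes $T_X$ visibly non-simple, so $h^0(X,\sEnd_0(T_X))\ge 1$ already, and the real content is to convert a section statement into an $H^1$ statement. The delicate step is therefore controlling $h^2(X,T_X\otimes\Omega^1_X)$ (equivalently $H^3$ by Serre duality, $H^3(X,T_X\otimes\Omega^1_X)\cong H^0(X,\Omega^1_X\otimes\Omega^2_X)^*$) so that the alternating sum forces $h^1>0$. Because $X$ is rationally connected one has $H^0(X,\Omega^1_X)=H^0(X,\Omega^2_X)=H^0(X,\Omega^1_X\otimes\Omega^2_X)=0$, so $H^3(X,T_X\otimes\Omega^1_X)=0$ and the only genuine computation left is $h^2$. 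I would either compute $h^2$ directly via the Leray spectral sequence on the $\mathbb P_1$-bundle, or — more cheaply — invoke Corollary~\ref{cor2}: it suffices to check $c_1^3(X)\le 63$. Here the arithmetic gives $c_1^3(X)=(-K_X)^3=72>63$, so Corollary~\ref{cor2} does \emph{not} apply directly, and this is precisely why a separate argument is needed for this borderline weak-Fano case. Thus the crux is the explicit push-forward computation of $h^1$ (or equivalently of $h^0$ and $h^2$ together with $\chi$), which the decomposable structure renders a finite, if slightly tedious, calculation in the cohomology of line bundles on $\mathbb P_2$.
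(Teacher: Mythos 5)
Your strategy is genuinely different from the paper's, and as written it stops short of the decisive step. The paper does not compute $h^1$ (or $h^2$) by pushing everything down to $\mathbb P_2$; instead it applies Proposition \ref{prop:bir0} to the negative section $E=\mathbb P(\sO_{\mathbb P_2})\simeq \mathbb P_2$: the residue sequence $0\to \Omega^1_X\otimes T_X\to \Omega^1_X(\log E)\otimes T_X\to T_X\vert E\to 0$ injects $H^0(E,T_X\vert E)\supset H^0(\mathbb P_2,T_{\mathbb P_2})\ne 0$ into $H^1(X,T_X\otimes\Omega^1_X)$, once one checks $H^0(X,T_X\otimes\Omega^1_X)=H^0(X,T_X\otimes\Omega^1_X(\log E))$. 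That equality is verified by two small computations ($H^0(E,T_{X/\mathbb P_2}\vert E)=H^0(E,N_{E/X})=H^0(\mathbb P_2,\sO(-3))=0$, and $\pi_*\Omega^1_X=\pi_*\Omega^1_X(\log E)$), together with Lemma \ref{lem:A}. So the paper manufactures an explicit $8$-dimensional subspace of $H^1$ rather than doing bookkeeping.

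The gap in your version is that the Euler-characteristic framework gives you essentially nothing here, and you have not supplied the input it would need. With $c_1(X)c_2(X)=24$ and $c_1^3(X)=72$ one gets $\chi(X,T_X\otimes\Omega^1_X)=9>0$, and $h^3=0$, so $h^1=h^0+h^2-9$; you must therefore prove $h^0+h^2\geq 10$. A push-forward computation gives $h^0(X,T_{X/\mathbb P_2}\otimes\pi^*\Omega^1_{\mathbb P_2})=h^0(\Omega^1_{\mathbb P_2}(-3))+h^0(\Omega^1_{\mathbb P_2})+h^0(\Omega^1_{\mathbb P_2}(3))=8$, so by Lemma \ref{lem:A} $h^0(X,T_X\otimes\Omega^1_X)$ is $9$ or $10$ depending on a connecting map; in the (likely) case $h^0=9$ you are reduced to proving $h^2\neq 0$, which is exactly as hard as the original statement and requires exhibiting nonzero classes (they come from $R^1\pi_*(\pi^*T_{\mathbb P_2}\otimes K_{X/\mathbb P_2})\simeq T_{\mathbb P_2}$ and from $h^2(\Omega^1_{\mathbb P_2}(-3))=8$, modulo several connecting maps). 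Your phrasing suggests that "controlling'' $h^2$ from above would suffice and that non-simplicity of $T_X$ (i.e.\ $h^0(\sEnd_0(T_X))\geq 1$, which indeed holds because $h^0(\Omega^1_{\mathbb P_2}(3))\ne 0$) is already most of the battle; neither is the case, since a lower bound on $h^0+h^2$ is what is needed and a nonzero $H^0(\sEnd_0)$ by itself produces nothing in $H^1$. The Leray computation is certainly finite and would succeed if carried through, but the unperformed part is the entire content of the proposition; the paper's residue-sequence argument is the device that avoids it.
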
 

We prepare the proof by the following

\begin{lemma} \label{lem:A} Let $\pi:X \to S$ be a $\mathbb P_1$-bundle over the smooth compact surface $S$. 
Assume that $h^0(S,T_S \otimes \Omega^1_S) = 1$. 
Then 
$$ h^0(X,T_X \otimes \Omega^1_X) = 1 + h^0(X,T_{X/S} \otimes \Omega^1_X) = 1 + h^0(X,-K_{X/S} \otimes \Omega^1_X).$$
\end{lemma}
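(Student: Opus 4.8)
The plan is to compute $h^0(X,T_X\otimes\Omega^1_X)=h^0(X,\sEnd(T_X))$ by restricting endomorphisms of $T_X$ to the relative tangent bundle. Starting from the relative tangent sequence
$$ 0 \to T_{X/S} \to T_X \xrightarrow{\;q\;} \pi^*T_S \to 0, $$
I would tensor by $\Omega^1_X$ and take global sections to obtain the left-exact sequence
$$ 0 \to H^0(X,T_{X/S}\otimes\Omega^1_X) \to H^0(X,T_X\otimes\Omega^1_X) \xrightarrow{\;r\;} H^0(X,\pi^*T_S\otimes\Omega^1_X). $$
Thus the asserted identity $h^0(X,T_X\otimes\Omega^1_X)=1+h^0(X,T_{X/S}\otimes\Omega^1_X)$ reduces to two claims: that the target of $r$ is one-dimensional, and that $r$ is surjective.

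For the dimension count I would use the relative cotangent sequence $0\to\pi^*\Omega^1_S\to\Omega^1_X\to\Omega^1_{X/S}\to 0$ tensored by $\pi^*T_S$:
$$ 0 \to \pi^*(\Omega^1_S\otimes T_S) \to \pi^*T_S\otimes\Omega^1_X \to \Omega^1_{X/S}\otimes\pi^*T_S \to 0. $$
By the projection formula and $\pi_*\sO_X=\sO_S$, the subsheaf contributes $H^0(X,\pi^*(\Omega^1_S\otimes T_S))=H^0(S,\Omega^1_S\otimes T_S)=\mathbb C$ by hypothesis. The quotient contributes nothing, since $\pi_*(\Omega^1_{X/S}\otimes\pi^*T_S)=\pi_*(K_{X/S})\otimes T_S=0$, because $K_{X/S}$ restricts to $\sO_{\mathbb P_1}(-2)$ on each fiber and so has no relative sections. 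Hence $H^0(X,\pi^*T_S\otimes\Omega^1_X)\simeq\mathbb C$.

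For surjectivity of $r$ I would track the image of $\id_{T_X}$. Under $r$ it maps to the composite $q\circ\id=q$, the natural projection $T_X\to\pi^*T_S$. Since $q$ vanishes on $T_{X/S}$, it lies in the image of $H^0(X,\pi^*(\Omega^1_S\otimes T_S))$, i.e. in the one-dimensional subspace identified above, where it corresponds to $\id_{T_S}\neq 0$. Therefore $r(\id)$ is a generator, $r$ is surjective, and the first equality follows. The second equality is immediate from $T_{X/S}\simeq -K_{X/S}$, valid because $X\to S$ has one-dimensional fibers.

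The main obstacle is precisely this surjectivity step: it is not enough to know $H^0(X,\pi^*T_S\otimes\Omega^1_X)$ is one-dimensional (which already rests on the vanishing $\pi_*K_{X/S}=0$); one must verify that the identity endomorphism hits a generator. The delicate point is the small diagram-level check that $q|_{T_{X/S}}=0$, which is what places $q$ inside the subspace $H^0(S,\Omega^1_S\otimes T_S)$ and identifies it with $\id_{T_S}$, rather than leaving it merely as some nonzero section of the ambient sheaf.
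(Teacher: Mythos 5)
Your proof is correct and follows essentially the same route as the paper: the same relative tangent sequence tensored by $\Omega^1_X$, the same computation of $h^0(X,\pi^*T_S\otimes\Omega^1_X)=1$ via the relative cotangent sequence and the vanishing of $\pi_*K_{X/S}$, and the same observation that the identity endomorphism maps to a nonzero (hence generating) element of the one-dimensional target. Nothing essential differs.
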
 

\begin{proof} 
The sequence 
$$ 0 \to T_{X/S} \otimes \Omega^1_X \to T_X \otimes \Omega^1_X \to \pi^*(T_S) \otimes \Omega^1_X \to 0$$
yields in cohomology
$$ 0 \to H^0(X,T_{X/S} \otimes \Omega^1_X) \to H^0(X,T_X \otimes \Omega^1_X) \buildrel{\alpha} \over {\to } H^0(X,\pi^*(T_S) \otimes \Omega^1_X).$$
The sequence 
$$ 0 \to \pi^*(T_S) \otimes \pi^*(\Omega^1_S) \to \pi^*(T_S) \otimes \Omega^1_X \to \pi^*(T_S) \otimes K_{X/S} \to 0 $$
shows
$$ H^0(X,\pi^*(T_S) \otimes \Omega^1_X) = H^0(S,T_S \otimes \Omega^1_S) \simeq \mathbb C.$$
Hence it suffices to observe that $\alpha \ne 0$. This is however clear: $\id: \Omega^1_X \to \Omega^1_X $ yields via $\alpha$ a non-zero morphism $\pi^*(\Omega^1_S) \to \Omega^1_X.$ 
\end{proof} 

\vskip .2cm \noindent
{\it Proof of Proposition \ref{weak} } By Proposition \ref{prop:bir0}, applied to the exceptional section $E := \mathbb P(\sO_{\mathbb P_2}) \simeq \mathbb P_2$ in $X$, it suffices to show
that 
\begin{equation} \label{log}  H^0(X,T_X \otimes \Omega^1_X) = H^0(X,T_X \otimes \Omega^1_X(\log E)). \end{equation} 
We use the exact sequence
$$ 0 \to T_{X/S} \otimes \Omega^1_X(\log E) \to T_X \otimes \Omega^1_X(\log E) \to \pi^*(T_S) \otimes \Omega^1_X(\log E) \to 0. $$ 
Using Lemma \ref{lem:A}, things come down to show 
$$ H^0(X,T_{X/S} \otimes \Omega^1_X) = H^0(X,T_{X/S} \otimes \Omega^1_X(\log E)) $$
and 
$$ h^0(X,\pi^*(T_S) \otimes \Omega^1_X(\log E)) = 1.$$ 
The first equation is seen by taking cohomology of the exact sequence
$$ 0 \to T_{X/S} \otimes \Omega^1_X \to T_{X/S} \otimes \Omega^1_X(\log E) \to {T_{X/S}}{\vert E} \to 0$$
and by observing 
$$ H^0(E, {T_{X/S}}{\vert E}) = H^0(E,{-K_{X/S}}{\vert E}) = H^0(E,N_{E/X}) = 0.$$
The second equation follows from the observation 
$$ \pi_*(\Omega^1_X) = \pi_*(\Omega^1_X(\log E)),$$ which is seen
either by restricting to the fibers of $\pi$ or by noticing that, taking $\pi_*$,  the induced morphism $\sO_S \to R^1\pi_*(\Omega^1_X) $ is injective. 
Thus Equation (\ref{log}) is shown and the proof of  Proposition \ref{weak} is complete. 

\vskip .2cm
In Subsection  \ref{sect:P1} we come back to $\mathbb P_1$-bundles  over surfaces in general. How the later results do not yield Proposition \ref{weak}.

\subsection{$\mathbb P_2$-bundles} 

We start to study threefolds carrying a projective bundle structure by studying $\mathbb P_2$-bundles. 

\begin{theorem} \label{P1} Let $\pi:X \to C$ be a $\mathbb P_2$-bundle over the smooth projective curve $C$. Then $T_X$ has a genuine first order deformation.
\end{theorem}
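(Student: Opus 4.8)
The plan is to work fibrewise along $\pi$ and push everything down to the base curve $C$ via the Leray spectral sequence. Write $X = \mathbb P(\sE)$ for a rank-three bundle $\sE$ on $C$. By (\ref{eq1-1}) it suffices to show $h^1(X,T_X\otimes\Omega^1_X) > q(X)$, and since $\pi_*\sO_X = \sO_C$ while $R^i\pi_*\sO_X = 0$ for $i>0$ (as $H^i(\mathbb P_2,\sO)=0$ for $i>0$), we have $q(X) = g := g(C)$. So the task is to produce strictly more than $g$ classes in $H^1(X,T_X\otimes\Omega^1_X)$.

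First I would compute the direct images $R^\bullet\pi_*(T_X\otimes\Omega^1_X)$. Filtering $T_X$ by the relative tangent sequence $0\to T_{X/C}\to T_X\to\pi^*T_C\to 0$ and $\Omega^1_X$ by its dual, the sheaf $T_X\otimes\Omega^1_X = \sEnd(T_X)$ acquires a filtration whose graded pieces are $\sEnd(T_{X/C})$, $T_{X/C}\otimes\pi^*\Omega^1_C$, $\pi^*T_C\otimes\Omega^1_{X/C}$ and $\sO_X$. On a fibre $F\cong\mathbb P_2$ one has $T_X|_F\cong T_{\mathbb P_2}\oplus\sO_F$ (the extension vanishes since $H^1(\mathbb P_2,T_{\mathbb P_2})=0$), and Bott's formulas give the fibrewise cohomology of every piece. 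The upshot is $R^2\pi_*=0$, that $R^0\pi_*(T_X\otimes\Omega^1_X)$ is locally free of rank $10$ with graded pieces $\sO_C$, $(\pi_*T_{X/C})\otimes K_C$ and $\sO_C$, and that $R^1\pi_*(T_X\otimes\Omega^1_X)\cong T_C = K_C^{-1}$ (only $\pi^*T_C\otimes\Omega^1_{X/C}$ has nonzero $R^1$, via $R^1\pi_*\Omega^1_{X/C}\cong\sO_C$). As $C$ is a curve the Leray spectral sequence collapses to the short exact sequence
$$0\to H^1(C,R^0\pi_*(T_X\otimes\Omega^1_X))\to H^1(X,T_X\otimes\Omega^1_X)\to H^0(C,K_C^{-1})\to 0,$$
surjectivity on the right following from $H^2(C,-)=0$.

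Then I would finish by a case distinction on $g$. For $g=0$ the term $H^0(C,K_C^{-1})=H^0(\mathbb P_1,\sO(2))$ already has dimension $3>0=q(X)$, so we are done. For $g\geq 1$ I use Riemann--Roch on $C$: since $\pi_*T_{X/C}\cong\sEnd_0(\sE)$ has degree $0$, the graded pieces give $\deg R^0\pi_* = 8\deg K_C = 16(g-1)$, hence $\chi(C,R^0\pi_*) = 16(g-1)+10(1-g) = 6(g-1)$. Using that $\sEnd_0(\sE)\otimes K_C$ is a subsheaf of $R^0\pi_*$ (pushforward of the subsheaf $T_{X/C}\otimes\pi^*\Omega^1_C$) and that the global identity of $T_X$ is a section independent of it, we obtain
$$h^1(C,R^0\pi_*) = h^0(C,R^0\pi_*)-6(g-1)\geq \bigl(1+h^0(C,\sEnd_0(\sE)\otimes K_C)\bigr)-6(g-1).$$
Since $\sEnd_0(\sE)\otimes K_C$ has rank $8$ and Euler characteristic $8(g-1)$, this gives $h^1(C,R^0\pi_*)\geq(1+8(g-1))-6(g-1)=2g-1$. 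For $g\geq 2$ we have $2g-1>g$, and the exact sequence yields $h^1(X,T_X\otimes\Omega^1_X)\geq 2g-1>q(X)$. For $g=1$ the same bound gives $h^1(C,R^0\pi_*)\geq 1$, while $H^0(C,K_C^{-1})=H^0(C,\sO_C)\cong\mathbb C$, so $h^1(X,T_X\otimes\Omega^1_X)\geq 2>1=q(X)$.

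The main obstacle is the second step: correctly identifying $R^1\pi_*(T_X\otimes\Omega^1_X)\cong K_C^{-1}$ and pinning down $\deg R^0\pi_*$ through the three-step filtration, in particular verifying that no extension class contributes to $R^1$ beyond the single piece $\pi^*T_C\otimes\Omega^1_{X/C}$, and that $\deg\sEnd_0(\sE)=0$. It is also worth stressing that the \emph{source} of the genuine deformation shifts with the genus: for $g\leq 1$ it is detected by the relative $(1,1)$-class $R^1\pi_*\Omega^1_{X/C}$, whereas for $g\geq 2$ it comes from $H^1$ of the relative endomorphism bundle, i.e.\ morally from deformations of $\sE$ itself.
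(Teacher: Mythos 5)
Your proof is correct, but it follows a genuinely different route from the paper's. You push $T_X\otimes\Omega^1_X$ down to the curve $C$ via Leray, identify $R^0\pi_*$ (rank $10$, with graded pieces $\sEnd_0(\sE)\otimes K_C$, $\sO_C$, $\sO_C$) and $R^1\pi_*\cong T_C$, and then run Riemann--Roch on $C$, which handles all genera uniformly: the $R^1$-term settles $g=0$ and contributes for $g=1$, while the bound $h^0(R^0\pi_*)\geq 1+h^0(\sEnd_0(\sE)\otimes K_C)\geq 1+8(g-1)$ against $\chi(R^0\pi_*)=6(g-1)$ gives $h^1\geq 2g-1>g$ for $g\geq 2$. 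The paper instead splits the argument: for $g=0$ it invokes the general rationally-connected criterion ($c_1^3(X)=54\leq 63$, Corollary \ref{cor3}), and for $g\geq 1$ it works directly on $X$ with the two-step filtration $0\to T_{X/C}\otimes\Omega^1_X\to T_X\otimes\Omega^1_X\to\pi^*(T_C)\otimes\Omega^1_X\to 0$, getting $h^1(X,T_{X/C}\otimes\Omega^1_X)\geq g$ from the relative Euler sequence together with the Hodge-theoretic input $h^2(X,\Omega^1_X)=g$, plus one extra class from the quotient. The two decompositions are the same at the level of graded pieces; what your version buys is a single uniform mechanism, an explicit lower bound ($\geq 2g-1$ for $g\geq 2$), and a cleaner identification of where the deformation classes live ($H^1(C,\sEnd_0(\sE)\otimes K_C)$ versus $H^0(C,T_C)$-type contributions), at the cost of having to verify the direct-image computations (constancy of fibrewise $h^0$, vanishing of $R^1$ on all pieces except $\pi^*T_C\otimes\Omega^1_{X/C}$, and $\deg\sEnd_0(\sE)=0$), all of which you do correctly.
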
 

\begin{proof} Write $X = \mathbb P(\sF)$ with a locally free sheaf $\sF$ of rank three on $C$. Let $g$ be the genus of $C$. 
If $g = 0$, then $(-K_X)^3 = 54 $, hence we conclude by Corollary \ref{cor3}. 
To compute $(-K_X)^3$, just use the formula
$$ -K_X = \sO_{\mathbb P(\sF)}(3) \otimes \pi^*(\det \sF^*) \otimes \sO_C(-K_{C})),$$
see e.g. \cite[Ex.III.8.4]{Ha77}.

Thus we will assume from now on that  $g \geq 1$. In this case 
$\chi(X,\sEnd_0(T_X)) \geq 0$, so a more detailled investigation has to be made. 
Taking cohomology of the exact sequence
$$ 0 \to T_{X/C} \otimes \Omega^1_X \to T_X \otimes \Omega^1_X \to \pi^*(T_C ) \otimes \Omega^1_X \to 0 $$
and observing that the morphism
$$ H^0(X,T_X \otimes \Omega^1_X) \to H^0(X,\pi^*(T_C) \otimes \Omega^1_X) = H^0(X,\pi^*(T_C \otimes \Omega^1_C)) \simeq \mathbb C$$
is surjective, leads to an exact sequence
$$ 0 \to H^1(X,T_{X/C} \otimes \Omega^1_X) \to H^1(X,T_X \otimes \Omega^1_X) \to H^1(X,\pi^*(T_C) \otimes \Omega^1_X) \to $$
$$ \to  H^2(X,T_{X/C} \otimes \Omega^1_X) .$$ 
We will now show 
\begin{enumerate}
\item $ h^1(X,T_{X/C} \otimes \Omega^1_X) \geq g$;
\item $H^1(X,\pi^*(T_C) \otimes \Omega^1_X) \ne 0$;
\item $ H^2(X,T_{X/C} \otimes \Omega^1_X) = 0$.
\end{enumerate}
Then the exact sequence yields 
$$ h^1(X,T_X \otimes \Omega^1_X) > g = h^1(X,\sO_X),$$
which was to be proved. \\
{\it Proof of (1).} Write $\sL := \sO_{\mathbb P(\sF)}(1)$ and tensor the relative Euler sequence 
$$ 0 \to \sO_X \to \pi^*(\sF^*) \otimes \sL \to T_{X/C} \to 0 $$
by $\Omega^1_X$ to obtain an exact sequence 
$$ H^1(X,T_{X/C} \otimes \Omega^1_X) \to H^2(X,\Omega^1_X) \to H^2(X,\pi^*(\sF^*) \otimes \sL \otimes \Omega^1_X).$$
Now  $H^2(X,\pi^*(\sF^*) \otimes \sL \otimes \Omega^1_X) = 0$ via the Leray spectral sequence. Further $h^2(X,\Omega^1_X) = g > 0$:  
use Hodge decomposition and  $H^{3,0} = 0$ to obtain $$ h^2(X,\Omega^1_X) =  \frac {b_3(X)}{2} = \frac{b_1(C)}{2} = g(C).$$ 
Hence (1) follows. \\
{\it Proof of (2).} Since $\pi_*(\Omega^1_X) = \Omega^1_C$, we have
$$  h^1(X,\pi^*(T_C) \otimes \Omega^1_X) \geq h^1(C, T_C \otimes \Omega^1_C) = g, $$
proving (2). \\
{\it Proof of (3).} This follows again by the Leray spectral sequence.

\end{proof} 

\subsection{$\mathbb P_1$-bundles}  \label{sect:P1}

In many cases the non-rigidity of the tangent bundle of a $\mathbb P_1$-bundle over a surface $S$ can be established as follows. For simplicity, we assume that 
$q(X) = H^1(X,\sO_X) = H^1(S,\sO_S) = 0$.

\begin{proposition} \label{prop:bundle}  Let $\pi: X \to S$ be a $\mathbb P_1-$bundle over the smooth compact surface $S$ with $q(S) = 0$. 
If
$$ H^1(X, T_X \otimes \pi^*(\Omega^1_S)) = H^1(S, \pi_*(T_X) \otimes  \Omega^1_S) \ne 0,$$
then $T_X$ has a genuine first order deformation. 

\end{proposition}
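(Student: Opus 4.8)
The plan is to verify the criterion \eqref{eq1-1}: since $T_X$ has a genuine first order deformation exactly when $h^1(X, T_X\otimes\Omega^1_X) > q(X)$, and here $q(X) = h^1(S,\sO_S) = 0$ (the fibres of $\pi$ being rational, so $H^1(X,\sO_X) = H^1(S,\pi_*\sO_X) = H^1(S,\sO_S)$), it suffices to prove $H^1(X, T_X\otimes\Omega^1_X) \ne 0$.

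First I would tensor the relative cotangent sequence $0 \to \pi^*\Omega^1_S \to \Omega^1_X \to \Omega^1_{X/S} \to 0$ by $T_X$ and take cohomology, producing the exact sequence
$$H^0(X, T_X\otimes\Omega^1_X) \xrightarrow{r} H^0(X, T_X\otimes\Omega^1_{X/S}) \xrightarrow{\delta} H^1(X, T_X\otimes\pi^*\Omega^1_S) \xrightarrow{\beta} H^1(X, T_X\otimes\Omega^1_X).$$
By hypothesis the third term is nonzero, so $H^1(X,T_X\otimes\Omega^1_X) \supseteq \im\beta \cong \operatorname{coker}\delta$, and the whole proof reduces to showing that the connecting map $\delta$ vanishes.

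To this end I would first compute $H^0(X, T_X\otimes\Omega^1_{X/S})$ by tensoring the relative tangent sequence $0 \to T_{X/S} \to T_X \to \pi^*T_S \to 0$ with the line bundle $\Omega^1_{X/S} = T_{X/S}^*$. Since $T_{X/S}\otimes\Omega^1_{X/S} = \sO_X$ while $\pi_*(\pi^*T_S\otimes\Omega^1_{X/S}) = T_S\otimes\pi_*K_{X/S} = 0$ (because $K_{X/S}$ has degree $-2$ on the fibres), this gives $H^0(X, T_X\otimes\Omega^1_{X/S}) \cong H^0(X,\sO_X) = \bC$. Under the identification $T_X\otimes\Omega^1_{X/S} = \sHom(T_{X/S}, T_X)$, the generator $s$ is precisely the inclusion $\iota\colon T_{X/S}\hookrightarrow T_X$.

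Finally I would note that $r$ already hits this generator, so $\delta = 0$. Indeed, dualizing $q\colon\Omega^1_X\to\Omega^1_{X/S}$ identifies the surjection $T_X\otimes\Omega^1_X = \sEnd(T_X) \to \sHom(T_{X/S}, T_X) = T_X\otimes\Omega^1_{X/S}$ with precomposition by $\iota$, whence the global section $\id_{T_X} \in H^0(X, T_X\otimes\Omega^1_X)$ maps to $\id_{T_X}\circ\iota = \iota = s$. Thus $r$ is surjective and $\delta = 0$, so $\beta$ is injective and $H^1(X, T_X\otimes\Omega^1_X) \ne 0$, as required. The one point demanding care is the computation $h^0(X, T_X\otimes\Omega^1_{X/S}) = 1$ together with the recognition of its generator as $\iota$; granting that, the vanishing of $\delta$ is forced by the existence of the identity endomorphism of $T_X$, and I expect this bookkeeping to be the main (though not deep) obstacle.
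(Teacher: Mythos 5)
Your proof is correct and follows essentially the same route as the paper: the paper uses the same two exact sequences (with $\Omega^1_{X/S}$ written as $K_{X/S}$), computes $h^0(X,T_X\otimes K_{X/S})=1$, and observes that the identity endomorphism of $T_X$ maps onto the generator, forcing the injection of $H^1(X,T_X\otimes\pi^*\Omega^1_S)$ into $H^1(X,T_X\otimes\Omega^1_X)$.
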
 

\begin{proof}  We use the exact sequence 
\begin{equation} \label{exseq1}  0 \to  T_X \otimes \pi^*(\Omega^1_S) \to T_X \otimes \Omega^1_X \to T_X \otimes K_{X/S} \to 0. \end{equation} 
From the exact sequence
\begin{equation} \label{exseq2}  0 \to \sO_X = -K_{X/S} \otimes K_{X/S} \to T_X \otimes K_{X/S} \to \pi^*(T_S) \otimes K_{X/S} \to 0, \end{equation} 
we deduce $h^0(X,T_X \otimes K_{X/S}) = 1.$ 
Now 
$$ H^0(X,T_X \otimes \Omega^1_X) \to H^0(X,T_X \otimes K_{X/S}) $$
does not vanish: $\id: T_X \to T_X$ induces a non-zero morphism $-K_{X/S} \to T_X$. 
Hence by Sequence (\ref{exseq1}),  $$H^1(X,T_X \otimes \pi^*(\Omega^1_S) = H^1(S,\pi_*(T_X) \otimes \Omega^1_S) $$ injects into $H^1(X,T_X \otimes \Omega^1_X )$. \\
\end{proof} 

\begin{corollary}  Let $\pi: X \to S$ be a $\mathbb P_1-$bundle over the smooth compact surface $S$ with $q(S) = 0$. 
If $$ h^0(S,T_S \otimes \Omega^1_S) = 1$$  and if 
$$h^1(X, T_{X/S} \otimes \pi^*(\Omega^1_S)) \geq 2,$$
then $T_X$ has a genuine first order deformation. 
\end{corollary}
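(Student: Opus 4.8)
The plan is to deduce the corollary directly from Proposition \ref{prop:bundle}. The proposition says that $T_X$ has a genuine first order deformation as soon as $H^1(X, T_X \otimes \pi^*(\Omega^1_S)) \neq 0$, so the whole task is to produce a nonzero class in that group under the two numerical hypotheses. First I would record the identification $H^1(X, T_X \otimes \pi^*(\Omega^1_S)) = H^1(S, \pi_*(T_X) \otimes \Omega^1_S)$ coming from the Leray spectral sequence, together with the relative tangent sequence
$$ 0 \to T_{X/S} \to T_X \to \pi^*(T_S) \to 0, $$
which I tensor by $\pi^*(\Omega^1_S)$ to obtain
$$ 0 \to T_{X/S} \otimes \pi^*(\Omega^1_S) \to T_X \otimes \pi^*(\Omega^1_S) \to \pi^*(T_S) \otimes \pi^*(\Omega^1_S) \to 0. $$

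Next I would pass to cohomology in this last sequence. The point is to compare $H^1(X, T_{X/S} \otimes \pi^*(\Omega^1_S))$, which by hypothesis has dimension at least $2$, with $H^1(X, T_X \otimes \pi^*(\Omega^1_S))$. The obstruction to injecting the former into the latter is the connecting map into $H^1(X, \pi^*(T_S) \otimes \pi^*(\Omega^1_S)) = H^1(S, T_S \otimes \Omega^1_S)$ (using $\pi_* \sO_X = \sO_S$ and $q(S) = 0$ to kill the relevant Leray terms). The plan is to show that the image of $H^1(X, T_{X/S} \otimes \pi^*(\Omega^1_S))$ in $H^1(X, T_X \otimes \pi^*(\Omega^1_S))$ is nonzero by controlling the cokernel: since $H^0(X, \pi^*(T_S) \otimes \pi^*(\Omega^1_S)) = H^0(S, T_S \otimes \Omega^1_S)$ has dimension $1$ by the hypothesis $h^0(S, T_S \otimes \Omega^1_S) = 1$, and this one-dimensional space is exactly the image of the identity endomorphism, the map $H^0(X, T_X \otimes \pi^*(\Omega^1_S)) \to H^0(X, \pi^*(T_S)\otimes \pi^*(\Omega^1_S))$ is already surjective. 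Hence the connecting homomorphism $H^0(X, \pi^*(T_S)\otimes \pi^*(\Omega^1_S)) \to H^1(X, T_{X/S}\otimes \pi^*(\Omega^1_S))$ vanishes, so $H^1(X, T_{X/S}\otimes \pi^*(\Omega^1_S)) \to H^1(X, T_X \otimes \pi^*(\Omega^1_S))$ is injective.

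Given injectivity, the hypothesis $h^1(X, T_{X/S} \otimes \pi^*(\Omega^1_S)) \geq 2 > 0$ immediately yields $H^1(X, T_X \otimes \pi^*(\Omega^1_S)) \neq 0$, and Proposition \ref{prop:bundle} finishes the argument. The main obstacle is the bookkeeping of the connecting maps: I must verify that the surjectivity of $H^0(X, T_X \otimes \pi^*(\Omega^1_S)) \to H^0(X, \pi^*(T_S)\otimes \pi^*(\Omega^1_S)) \simeq \mathbb C$ — which follows because $\id \in H^0(X, T_X \otimes \Omega^1_X)$ restricts to the tautological generator, exactly as in the proof of Proposition \ref{prop:bundle} — really forces the connecting map out of $H^0$ to be zero. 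This is why the hypothesis $h^0(S, T_S \otimes \Omega^1_S) = 1$ is needed: it pins down the target of the connecting map to be one-dimensional and accounted for by the identity, so that none of the two-dimensional space $H^1(X, T_{X/S}\otimes \pi^*(\Omega^1_S))$ is lost. One subtlety to check carefully is that the bound is genuinely $\geq 2$ rather than $\geq 1$, reflecting that a single dimension may be consumed, but in fact the argument above shows no dimension is consumed, so even $\geq 1$ would suffice; I would state the proof so that the clean injectivity is transparent.
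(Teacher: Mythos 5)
Your overall route is the paper's: tensor the relative tangent sequence by $\pi^*(\Omega^1_S)$, take cohomology, and feed the resulting nonvanishing of $H^1(X,T_X\otimes\pi^*(\Omega^1_S))$ into Proposition \ref{prop:bundle}. But the key step in your execution is wrong. You claim that the restriction map
$$ H^0(X,T_X\otimes\pi^*(\Omega^1_S)) \to H^0(X,\pi^*(T_S)\otimes\pi^*(\Omega^1_S)) \simeq \mathbb C $$
is surjective because ``the identity endomorphism restricts to the tautological generator.'' The identity is a section of $T_X\otimes\Omega^1_X=\sHom(T_X,T_X)$, not of the subsheaf $T_X\otimes\pi^*(\Omega^1_S)=\sHom(\pi^*(T_S),T_X)$; sections of the latter, viewed inside $\sHom(T_X,T_X)$, are exactly the endomorphisms of $T_X$ killing $T_{X/S}$, and the identity is not one of them. (You are transplanting the argument from the proof of Proposition \ref{prop:bundle}, where the identity lives in the \emph{middle} term of the sequence $0\to T_X\otimes\pi^*(\Omega^1_S)\to T_X\otimes\Omega^1_X\to T_X\otimes K_{X/S}\to 0$ and maps nontrivially to the quotient; here you need it to lie in the \emph{sub}-term of a different sequence.) In fact, surjectivity of the map above is equivalent to the existence of a splitting $\pi^*(T_S)\to T_X$ of the relative tangent sequence, and the connecting homomorphism sends $\id_{\pi^*(T_S)}$ precisely to the extension class of that sequence in $H^1(X,T_{X/S}\otimes\pi^*(\Omega^1_S))=\Ext^1(\pi^*(T_S),T_{X/S})$, which is nonzero in general. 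So your claimed injectivity of $H^1(X,T_{X/S}\otimes\pi^*(\Omega^1_S))\to H^1(X,T_X\otimes\pi^*(\Omega^1_S))$, and your remark that ``$\geq 1$ would suffice,'' are both unjustified.

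The statement is nevertheless saved by the hypothesis you tried to discard: since $h^0(X,\pi^*(T_S)\otimes\pi^*(\Omega^1_S))=h^0(S,T_S\otimes\Omega^1_S)=1$, the image of the connecting homomorphism in $H^1(X,T_{X/S}\otimes\pi^*(\Omega^1_S))$ is at most one-dimensional, so the kernel of $H^1(X,T_{X/S}\otimes\pi^*(\Omega^1_S))\to H^1(X,T_X\otimes\pi^*(\Omega^1_S))$ has dimension at most $1$. The hypothesis $h^1(X,T_{X/S}\otimes\pi^*(\Omega^1_S))\geq 2$ then forces a nonzero image, hence $H^1(X,T_X\otimes\pi^*(\Omega^1_S))\neq 0$, and Proposition \ref{prop:bundle} applies. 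This dimension count is what the bound $\geq 2$ is for; replace your connecting-map vanishing argument by it.
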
 

\begin{proof} This is immediate, taking cohomology of
$$ 0 \to T_{X/S} \otimes \pi^*(\Omega^1_S) \to T_X \otimes \pi^*(\Omega^1_S) \to \pi^*(T_S \otimes \Omega^1_S) \to 0.$$

\end{proof}

With a little more care and a slighty stronger assumption on $h^1(X, T_{X/S} \otimes \pi^*(\Omega^1_S))$, but without assumption on $q(S)$,  we obtain non-obstructed deformations: 

\begin{proposition} Let $\pi: X\to S$ be a $\mathbb P_1$-bundle. Assume that 
$$ h^0(S,T_S \otimes \Omega^1_S) = 1$$ and that 
$$ h^1(X,T_{X/S} \otimes \pi^*(\Omega^1_S)) \geq 3.$$
Then $T_X$ has a genuine non-obstructed deformation.
\end{proposition}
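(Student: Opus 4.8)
The plan is to run the first-order argument of Proposition \ref{prop:bundle} one step further, this time keeping track of second cohomology. By the Remark following Equation (\ref{eq1b}) it suffices to establish
$$ h^1(X,{\sEnd}_0(T_X)) - h^2(X,{\sEnd}_0(T_X)) > 0. $$
Throughout I retain the notation $A := T_X \otimes \pi^*(\Omega^1_S)$, $B := T_X \otimes \Omega^1_X$, $C := T_X \otimes K_{X/S}$ of the sequence (\ref{exseq1}), and I set $P := T_{X/S} \otimes \pi^*(\Omega^1_S)$, so that the hypothesis reads $h^1(X,P) \geq 3$. From the proof of Proposition \ref{prop:bundle} I keep the two facts $h^0(C) = 1$ and that $H^0(B) \to H^0(C)$ is onto (induced by $\id$); the latter forces the connecting map $H^0(C) \to H^1(A)$ to vanish, so $H^1(A) \hookrightarrow H^1(B)$.

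First I would isolate the genuine classes. The trace (contraction) $B = T_X \otimes \Omega^1_X \to \sO_X$ restricts on $A$ to $v \otimes \pi^*\omega \mapsto \langle \omega, \pi_* v\rangle$, which vanishes identically on the vertical subsheaf $P$. Hence, under $H^1(A) \hookrightarrow H^1(B) = H^1(X,{\sEnd}_0(T_X)) \oplus H^1(X,\sO_X)$, the image of $H^1(P)$ is killed by the projection onto $H^1(X,\sO_X)$; that is, it lands in $H^1(X,{\sEnd}_0(T_X))$. Using the relative sequence $0 \to P \to A \to \pi^*(T_S \otimes \Omega^1_S) \to 0$ together with $h^0(S,T_S \otimes \Omega^1_S) = 1$, the kernel of $H^1(P) \to H^1(A)$ is at most one-dimensional, so these traceless classes number at least $h^1(P) - 1 \geq 2$. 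This already reproves the first-order statement and yields $h^1(X,{\sEnd}_0(T_X)) \geq h^1(P) - 1$.

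The key structural step is a cancellation of the irrelevant cohomology. For any short exact sequence $0 \to \sF' \to \sF \to \sF'' \to 0$ one has $h^1(\sF) - h^2(\sF) = [h^1(\sF') - h^2(\sF')] + [h^1(\sF'') - h^2(\sF'')] - \delta_0 + \delta_2$, where $\delta_0,\delta_2$ are the ranks of the connecting maps $H^0(\sF'') \to H^1(\sF')$ and $H^2(\sF'') \to H^3(\sF')$ (the middle connecting map cancels). Applied to (\ref{exseq1}) the corrections vanish: $\delta_0 = 0$ by the surjectivity above, and $\delta_2 = 0$ because $H^3(A) = H^3(S, \pi_*(T_X) \otimes \Omega^1_S) = 0$ (projection formula, $R^{>0}\pi_* T_X = 0$, and $\dim S = 2$). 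Applied to (\ref{exseq2}) the corrections again vanish, since $H^0(\pi^*(T_S) \otimes K_{X/S}) = 0$ and $H^3(\sO_X) = 0$; using $H^i(\pi^*(T_S) \otimes K_{X/S}) = H^{i-1}(S,T_S)$ (from $R^1\pi_* K_{X/S} = \sO_S$) this gives $h^1(C) - h^2(C) = [h^1(\sO_X) - h^2(\sO_X)] + h^0(S,T_S) - h^1(S,T_S)$. Combining the two sequences and subtracting the $h^i(\sO_X)$ to pass to ${\sEnd}_0$, the terms $h^1(\sO_X) - h^2(\sO_X)$ cancel and I obtain the clean identity
$$ h^1(X,{\sEnd}_0(T_X)) - h^2(X,{\sEnd}_0(T_X)) = \bigl[\,h^1(A) - h^2(A)\,\bigr] + h^0(S,T_S) - h^1(S,T_S). $$

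It remains to bound the right-hand side from below, and this is where I expect the main obstacle. The first bracket is controlled below by the genuine classes, $h^1(A) \geq h^1(P) - 1 \geq 2$, while the subtle point is an upper bound for the obstruction contribution $h^2(A)$ (equivalently for $h^2(X,{\sEnd}_0(T_X))$). Feeding the relative sequence for $P$ through Leray and relative duality on $S$ reduces everything to a cohomology computation on the surface, where the hypothesis $h^0(S,T_S \otimes \Omega^1_S) = 1$ (simplicity of $T_S$) and the now-available \emph{strict} bound $h^1(P) \geq 3$ must combine to beat the residual surface terms $h^1(S,T_S) - h^0(S,T_S)$ and $h^2(A)$. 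The hard part is precisely this last estimate: unlike the first-order statement, where $h^1(P) \geq 2$ suffices to produce one genuine class, here the extra unit in $h^1(P) \geq 3$ is exactly what is needed to absorb a possible single obstruction and force $h^1 - h^2 > 0$.
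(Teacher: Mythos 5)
Your proposal does not complete the proof, and the missing step is essential rather than routine. You reduce everything to the identity
$$ h^1(X,{\sEnd}_0(T_X)) - h^2(X,{\sEnd}_0(T_X)) = \bigl[h^1(A) - h^2(A)\bigr] + h^0(S,T_S) - h^1(S,T_S), $$
which (after checking the vanishing of the relevant connecting maps via $R^1\pi_*T_X=0$, $\pi_*K_{X/S}=0$, $R^1\pi_*K_{X/S}=\sO_S$) does appear to be correct. But the hypotheses $h^0(S,T_S\otimes\Omega^1_S)=1$ and $h^1(X,T_{X/S}\otimes\pi^*\Omega^1_S)\geq 3$ give you only a lower bound $h^1(A)\geq 2$; they give no control whatsoever over $h^2(A)$ or over $h^1(S,T_S)$, both of which can be arbitrarily large (take $S$ of general type with $h^0(S,T_S)=0$ and many moduli, so $h^1(S,T_S)$ is large, and choose $\sF$ so that $h^2(S,S^2\sF\otimes\det\sF^*\otimes\Omega^1_S)$ is large). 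So the right-hand side cannot be bounded below by the given data, and the sufficient criterion (\ref{eq1b}) you are trying to verify can simply fail under the stated hypotheses. You acknowledge this yourself in the final paragraph; as written, the argument stops exactly where the proposition still has to be proved.

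The paper avoids this entirely by never invoking the numerical criterion, which is sufficient but not necessary for the existence of an unobstructed deformation. Instead it produces the positive-dimensional deformation explicitly: the identification $\Ext^1(\pi^*(T_S),T_{X/S})=H^1(X,T_{X/S}\otimes\pi^*(\Omega^1_S))$ gives an at least three-dimensional family of extensions $0\to T_{X/S}\to\sE_t\to\pi^*(T_S)\to 0$, hence a flat family of sheaves containing $T_X$. If $\sE_t\simeq T_X$, then since $\Hom(T_{X/S},\pi^*(T_S))=0$ (restrict to fibers) the isomorphism induces endomorphisms of $T_{X/S}$ and of $\pi^*(T_S)$, each a multiple of the identity (the latter by the simplicity hypothesis $h^0(S,T_S\otimes\Omega^1_S)=1$); so the classes yielding $T_X$ fill at most a two-dimensional subspace, and the general $\sE_t$ is a genuine deformation of $T_X$ over a positive-dimensional base --- unobstructed by construction, with no $h^2$ computation needed. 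This is the idea your proposal is missing: the hypothesis $h^1\geq 3$ is there to beat the two-dimensional orbit of the original extension class, not to absorb an obstruction group.
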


\begin{proof} We consider the tangent bundle sequence 
$$ 0 \to T_{X/S} \lra T_X {\buildrel{\alpha} \over { \lra}} \pi^*(T_S) \to 0.$$ 
Since
$$ \dim {\rm Ext}^1(\pi^*(T_S),T_{X/S}) = h^1(X,T_{X/S} \otimes \pi^*(\Omega^1_S)) \geq 3,$$
we obtain an at least three-dimensional family of extensions
$$ 0 \to T_{X/S} \buildrel {\beta} \over {\lra} \sE_t \lra \pi^*(T_S) \to 0,$$ 
and it suffices to show that $\sE_t \not \simeq T_X$ for general $t$. 
Assume to the contrary that $\sE_t \simeq T_X$. 
Then the composed map $\alpha  \circ \beta: T_{X/S} \to \pi^*(T_S) $ must vanish (restrict to fibers of $\pi$). 
Hence $\beta$ induces a morphism $T_{X/S} \to T_{X/S} $ which must be a multiple of the identity map. 
Thus we have an induced map $\pi^*(T_S) \to \pi^*(T_S)$, which by assumption is another multiple of the identity.
Hence the space of extension is twodimensional, contradicting our dimension assumption. 

\end{proof} 

We now give a criterion for the nonvanishing of $ H^1(X,T_{X/S} \otimes \pi^*(\Omega^1_S))$. 
We assume for simplicity that the $\mathbb P_1$-bundle $X \to S$ is actually of the form $X = \mathbb P(\sF)$ with a locally free sheaf $\sF$ of rank two on $S$;
this can always arranged by passing to a finite \'etale cover of $S$, see \cite{El82}.

\begin{corollary} Let $\sF$ be a locally free sheaf of rank two on the smooth compact complex surface $S$ with $q(S) = 0$ and set $X = \mathbb P (\sF)$.
Assume that $$h^0(S,T_S \otimes \Omega^1_S) = 1$$ and that 
$$ 2c_1^2(\sF) - 8c_2(\sF) + \frac{1}{2}( K_S^2 - 5 c_2(S)) \leq -2.$$
Then $T_X$ has a genuine first order deformation.
This happens e.g., when $\sF$ is $\omega$-semistable for some K\"ahler form $\omega$, so that $c_1^2(\sF) \leq 4c_2(\sF)$.
\end{corollary}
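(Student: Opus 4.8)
The plan is to deduce the statement from the Corollary to Proposition \ref{prop:bundle}, which produces a genuine first order deformation of $T_X$ as soon as $h^0(S,T_S\otimes\Omega^1_S)=1$ (this is a hypothesis) and $h^1(X,T_{X/S}\otimes\pi^*(\Omega^1_S))\geq 2$. Thus the whole problem reduces to extracting the inequality $h^1(X,T_{X/S}\otimes\pi^*(\Omega^1_S))\geq 2$ from the numerical assumption on the Chern classes of $\sF$.

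First I would transfer the computation from $X$ down to $S$. Since $X=\mathbb P(\sF)$ is a $\mathbb P_1$-bundle, the relative Euler sequence gives $T_{X/S}\simeq \sO_{\mathbb P(\sF)}(2)\otimes\pi^*(\det\sF^*)$, so that $\pi_*(T_{X/S})\simeq S^2\sF\otimes\det\sF^*$ while $R^1\pi_*(T_{X/S})=0$, the restriction to each fiber being $\sO_{\mathbb P_1}(2)$. By the projection formula together with the Leray spectral sequence,
$$H^i(X,T_{X/S}\otimes\pi^*(\Omega^1_S))\simeq H^i(S,S^2\sF\otimes\det\sF^*\otimes\Omega^1_S)$$
for every $i$. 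I would then record the identification $S^2\sF\otimes\det\sF^*\simeq\sEnd_0(\sF)$, a rank three bundle on $S$ with $c_1=0$ and $c_2=4c_2(\sF)-c_1^2(\sF)$.

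Next I would compute the Euler characteristic $\chi(S,\sEnd_0(\sF)\otimes\Omega^1_S)$ by Riemann–Roch on the surface $S$. Expanding $\mathrm{ch}(\sEnd_0(\sF))\cdot\mathrm{ch}(\Omega^1_S)\cdot\mathrm{td}(S)$ and taking the degree-two part, the result comes out to be exactly
$$\chi(S,\sEnd_0(\sF)\otimes\Omega^1_S)=2c_1^2(\sF)-8c_2(\sF)+\tfrac12\bigl(K_S^2-5c_2(S)\bigr),$$
that is, precisely the left-hand side of the numerical hypothesis. Since this quantity is assumed to be $\leq -2$, and since $h^1=h^0+h^2-\chi\geq -\chi$ holds unconditionally, I obtain $h^1(S,\sEnd_0(\sF)\otimes\Omega^1_S)\geq 2$, hence $h^1(X,T_{X/S}\otimes\pi^*(\Omega^1_S))\geq 2$, and the Corollary applies to finish.

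For the concluding remark on semistability, Bogomolov's inequality for an $\omega$-semistable rank two bundle gives $c_1^2(\sF)\leq 4c_2(\sF)$, so the leading term $2c_1^2(\sF)-8c_2(\sF)$ is nonpositive and the surface contribution $\tfrac12(K_S^2-5c_2(S))$ then typically pushes the expression below $-2$. I do not expect a serious obstacle in this argument: the substantive content is the translation of cohomology on $X$ into cohomology on $S$ and the Riemann–Roch bookkeeping, and the one point to watch is that the estimate $h^1\geq -\chi$ is automatic, so that no separate vanishing of $h^0$ or $h^2$ on $S$ needs to be established.
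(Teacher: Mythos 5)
Your argument is correct and is essentially the paper's own proof: reduce to the corollary of Proposition \ref{prop:bundle}, identify $h^1(X,T_{X/S}\otimes\pi^*(\Omega^1_S))$ with $h^1(S,S^2\sF\otimes\det\sF^*\otimes\Omega^1_S)$ via $\pi_*(T_{X/S})=S^2\sF\otimes\det\sF^*$, verify by Riemann--Roch that the Euler characteristic equals the displayed quantity, and use $h^1\geq-\chi\geq 2$. The only soft spot is the final semistability clause, where ``typically pushes the expression below $-2$'' should be made precise as the paper does: $q(S)=0$ forces $K_S^2\leq 3c_2(S)$ and $c_2(S)=\chi_{\rm top}(S)\geq 3$, so $\tfrac12(K_S^2-5c_2(S))\leq -c_2(S)\leq -3$, which together with Bogomolov's inequality gives the required bound unconditionally.
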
 

\begin{proof} Notice that 
$$ \pi_*(T_{X/S}) = S^2(\sF) \otimes \det \sF^*, $$ thus
 $$h^1(S,S^2(\sF) \otimes \det \sF^* \otimes \Omega^1_S) = h^1(X,T_{X/S} \otimes \pi^*(\Omega^1_S).$$
 Hence it suffices to show that 
 $$ \chi(S,S^2(\sF) \otimes \det \sF^* \otimes \Omega^1_S) \leq -2,$$
 which is equivalent by Riemann-Roch to our assumption. \\
 Notice finally that since $q(S) = 0$, then $K_S^2 \leq 3c_2(S)$ and $c_2(S) = \chi_{\rm top}(S) \geq 3$. 
 \end{proof} 
 
 In the same manner, we have

\begin{corollary} Let $\sF$ be a locally free sheaf of rank two on the smooth compact complex surface $S$  and set $X = \mathbb P (\sF)$.
Assume that $h^0(S,T_S \otimes \Omega^1_S) = 1$ and that 
$$ 2c_1^2(\sF) - 8c_2(\sF) + \frac{1}{2}( K_S^2 - 5 c_2(S)) \leq -3.$$
Then $T_X$ has a genuine unobstructed deformation.
\end{corollary}

\begin{remark} The equation 
$H^1(X,T_X \otimes \pi^*(\Omega^1_S)) \ne 0 $ also holds under the assumptions 
$$H^1(S, T_S \otimes \Omega^1_S) \ne 0 $$
 and $$h^0(S,S^2(\sF^*) \otimes \det \sF \otimes \Omega^1_S) = 0,$$ 
again by taking cohomology of the exact sequence 
$$ 0 \to T_{X/S} \otimes \pi^*(\Omega^1)S) \to T_X \otimes \pi^*(\Omega^1_S) \to \pi^*(T_S \otimes \Omega^1_S) \to 0.$$

\end{remark}

\begin{remark} In our setting $X = \mathbb P(\sF)$, we have
$$ \chi(X,\sEnd_0(T_X)) = 2c_1^2(\sF) - 8 c_2(\sF) + \frac{2}{3} \big(K_S^2 - 8 c_2(S) \big).$$
So if this number is negative and if 
$$H^3(X,\sEnd_0(T_X))) = 0,$$ 
then $H^1(X,\sEnd_0(T_X)) \ne 0$. 
The last vanishing amounts via Serre duality to 
$$ h^0(X,\Omega^1_X \otimes \Omega^2_X) \leq h^0(X,K_X)).$$ 
\end{remark}

\subsection{Blow-ups} 

We conclude the section by studying blow-ups. First we recollect our knowledge on blow-ups of points. 

\begin{proposition} \label{blow1} Let $Y$ be a smooth compact complex threefold and $\pi: X\to Y$ be the blow-up at $y_0$ with exceptional  divisor $E$. 
Then $T_X$ has a genuine first order deformation provided one of the following holds. 
\begin{enumerate} 
\item $Y$ is rationally connected with $c_1^3(Y) \leq 71$;
\item $Y$ is smooth threefold with $c_1^3(Y) \leq 71$ and $Y$ is uniruled. \\
If the MRC fibration has two-dimensional (smooth) image $S$, suppose
further that $\chi(S,\sO_S) \ne 0$ and that  $H^0(S,\Omega^1_S \otimes K_S) = 0$;
\item  $T_Y$ has a genuine first order deformation and  $h^0(Y,T_Y \otimes \Omega^1_Y) < 9$;
\item $h^0(Y,T_Y \otimes \Omega^1_Y) = 1$, i.e., $T_Y$ is simple.
\end{enumerate} 
In the cases (1) and (2) $T_X$ has even a genuine non-obstructed deformation; the same being true in case (3) provided $T_Y$ has a non-obstructed genuine deformation. 

\end{proposition}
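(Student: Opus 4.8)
The plan is to translate each hypothesis on $Y$ into a statement to which an earlier result applies, exploiting that a point blow-up is numerically transparent: for $\pi:X\to Y$ one has $c_1^3(X)=c_1^3(Y)-8$, $\chi(X,\sO_X)=\chi(Y,\sO_Y)$, $H^0(X,\Omega^p_X)=H^0(Y,\Omega^p_Y)$, and $X$ is rationally connected (resp. uniruled) if and only if $Y$ is. In particular $c_1^3(Y)\le 71$ becomes $c_1^3(X)\le 63$, the threshold of Corollaries \ref{cor2} and \ref{cor3}. Cases (3) and (4) are then immediate. In (3) the hypotheses $h^0(Y,T_Y\otimes\Omega^1_Y)<9=n^2$ and the assumed (non-obstructed) deformation of $T_Y$ are exactly those of Corollary \ref{cor:blow-up}, which transfers the deformation to $X$. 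In (4), $T_Y$ simple means $H^0(Y,\Omega^1_Y\otimes T_Y)\simeq\mathbb C$, so Theorem \ref{thm:gendiv} applies directly to the point blow-up and yields $H^1(X,\sEnd_0(T_X))\ne 0$. Case (1) is Corollary \ref{cor3} applied to $X$, which is rationally connected with $c_1^3(X)\le 63$; since $q(X)=0$ the resulting non-obstructed deformation is genuine.

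For the main case (2) I would work through the non-obstructedness criterion $h^1(\sEnd_0(T_X))>h^2(\sEnd_0(T_X))$, rewritten via Riemann--Roch as
\[
h^1(\sEnd_0(T_X))-h^2(\sEnd_0(T_X))=h^0(\sEnd_0(T_X))-h^0(X,\Omega^1_X\otimes\Omega^2_X)-\bigl(c_1^3(X)-64\,\chi(X,\sO_X)\bigr),
\]
using Proposition \ref{BL}, $\chi(\sO_X)=\tfrac{1}{24}c_1(X)c_2(X)$, and Serre duality for the $h^3$ term. I then split according to the dimension of the image $Z$ of the MRC fibration of $Y$, whose base is not uniruled (Graber--Harris--Starr). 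If $\dim Z=0$ then $Y$ is rationally connected and we are back in case (1). If $\dim Z=2$, the base $S$ is a non-uniruled surface, so $\kappa(S)\ge 0$ and hence $\chi(S,\sO_S)\ge 0$; the hypothesis $\chi(S,\sO_S)\ne 0$ upgrades this to $\chi(X,\sO_X)=\chi(S,\sO_S)\ge 1$, while $H^0(S,\Omega^1_S\otimes K_S)=0$ is exactly what kills the global tensors in $H^0(X,\Omega^1_X\otimes\Omega^2_X)$. Since $h^0(\sEnd_0(T_X))\ge 0$, the displayed identity then gives $h^1(\sEnd_0(T_X))-h^2(\sEnd_0(T_X))\ge 64-c_1^3(X)\ge 1>0$.

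The main obstacle is the remaining subcase $\dim Z=1$ of case (2), where $Z=B$ is a curve of genus $g\ge 1$. Here $H^0(X,\Omega^1_X)=H^0(B,\Omega^1_B)\ne 0$ and $\chi(X,\sO_X)=1-g\le 0$, so the Euler characteristic $c_1^3(X)-64\,\chi(X,\sO_X)$ is large and positive and non-obstructedness cannot be read off numerically; moreover Corollary \ref{cor2} is unavailable because $H^0(\Omega^1_X)\ne 0$. The plan here is to produce, from the fibration, enough sections to win: the $g$-dimensional space $H^0(B,\Omega^1_B)$, combined with the structure $T_{Y/B}\subset T_Y\twoheadrightarrow\pi^*T_B$, should feed a correspondingly large space of traceless endomorphisms into $H^0(Y,\sEnd_0(T_Y))$, after which one transports non-obstructedness from $Y$ to $X$ through the inequalities $h^1(\sEnd_0(T_X))\ge h^1(\sEnd_0(T_Y))$ (strict once $h^0(Y,T_Y\otimes\Omega^1_Y)<9$) and $h^2(\sEnd_0(T_X))\le h^2(\sEnd_0(T_Y))$ of Proposition \ref{prop:blow-up}. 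Quantifying this section count sharply enough to dominate the positive Euler characteristic, uniformly over all such threefolds (including products $B\times F$ with $F$ a rational surface), is the step I expect to demand the most care.
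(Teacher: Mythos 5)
Your reductions for (1), (3) and (4) are exactly the paper's: Corollary \ref{cor3} for (1), Corollary \ref{cor:blow-up} for (3), Theorem \ref{thm:gendiv} for (4). Your treatment of case (2) when the MRC image is a surface also reproduces the paper's computation: $\chi(X,\sEnd_0(T_X))=c_1^3(X)-64\,\chi(X,\sO_X)=c_1^3(X)-64\,\chi(S,\sO_S)\le 63-64<0$, combined with the vanishing $H^3(X,T_X\otimes\Omega^1_X)=H^0(X,\Omega^1_X\otimes\Omega^2_X)=H^0(S,\Omega^1_S\otimes K_S)=0$ supplied by Proposition \ref{prop:forms}. Up to that point the proposal is correct and follows the same route.

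The genuine gap is the one you flag yourself: the subcase of (2) where the MRC image is a curve $B$ of genus $g\ge 1$. Your proposed repair --- manufacturing on the order of $c_1^3(X)+64(g-1)$ independent sections of $\sEnd_0(T_Y)$ from the fibration so as to dominate the now positive Euler characteristic --- is not carried out, and there is no visible mechanism producing that many traceless endomorphisms (even for $X=B\times F$ with $F$ rational the K\"unneth contributions are nowhere near $64(g-1)$); as written this subcase remains open in your argument. The paper closes it differently: it never tries to enlarge $h^0$, but keeps the inequality $h^1-h^2+h^3>h^0\ge 0$ coming from $\chi(\sEnd_0(T_X))<0$ and then proves $h^3(X,\sEnd_0(T_X))=0$ by observing that for a one-dimensional base the MRC map is an actual morphism whose general fibre is a rational surface, whence $H^0(X,\Omega^1_X\otimes\Omega^2_X)=0$ by restriction to a fibre. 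Be aware, however, that the paper's premise $\chi(X,\sEnd_0(T_X))<0$ is itself derived from $\chi(S,\sO_S)\ge 1$, an assumption stated (and valid) only when the base is a surface; over a base curve of genus $g\ge 1$ one has $\chi(X,\sO_X)=1-g\le 0$ and the same bound gives $\chi(X,\sEnd_0(T_X))\le 63+64(g-1)$, which is useless. So your diagnosis that pure numerics break down in this subcase is accurate, and you should either restrict the statement accordingly or find the additional input that neither your sketch nor the quoted computation provides.
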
 

\begin{proof} Assertion (1) is settled by Corollary \ref{cor3}; (3) by Corollary \ref{cor:blow-up} and (4) by Theorem \ref{thm:gendiv}. Thus
only (2) needs to be proven.
We calculate
$$ \chi(X,\sEnd_0(T_X)) =  \chi(X,T_X \otimes \Omega^1_X) - \chi(X,\sO_X)  = $$
$$ = c_1^3(X) - 64 \chi(X,\sO_X) = c_1^3(X)  - 64\chi(S,\sO_S) \leq 63 - 64\chi(S,\sO_S).$$ 
Hence by our assumption $\chi(S,\sO_S) \geq 1$ and therefore $ \chi(X,\sEnd_0(T_X)) <  0$ and therefore
$$ h^1(X,\sEnd_0(T_X)) - h^2(X,\sEnd_0(T_X))  + h^3(X,\sEnd_0(T_X) ) > 0. $$
Hence it suffices to show that 
\begin{equation} \label{eq:H3} H^3(X,T_X \otimes \Omega^1_X) = 0. \end{equation} 
By Serre duality, 
$$ H^3(X,T_X \otimes \Omega^1_X) = H^0(X,T_X \otimes \Omega^1_X \otimes \sO_X(K_X)) = H^0(X,\Omega^1_X \otimes \Omega^2_X).$$
We may assume that $Y$ is not rationally connected. $Y$ being uniruled, we consider the MRC fibration $f: X \dasharrow S$, \cite{Kol96},  with $S$ not uniruled and smooth. 
If $\dim S = 2$, Proposition \ref{prop:forms} shows that 
$$ H^0(X,\Omega^1_X \otimes \Omega^2_X) = H^0(S, \Omega^1_S \otimes K_S).$$
If $\dim S = 1$, then $S$ is a smooth curve of genus at least two, and $f$ is actually a morphism. 
But then, restricting to a general fiber $F$, a rational surface, a direct calculation shows that 
$$ H^0(X,\Omega^1_X \otimes \Omega^2_X) = 0,$$
proving (\ref{eq:H3}).

\end{proof} 

In the proof of the next proposition we will use the following

\begin{notation} Let $X$ be a normal complex algebraic variety. Then 
$$ \Omega^{[q]}_X := (\bigwedge^q \Omega^1_X)^{**}$$
denotes the sheaf of reflexive $q$-forms. 
If $X$ has canonical singularites and if $\pi: \hat X \to X$ is a desingularization, then by \cite{GKKP11}, 
$$  \Omega^{[q]}_X = \pi_*(\Omega^q_{\hat X}).$$ 
\end{notation}

\begin{proposition} \label{prop:forms} Let $X$ be a smooth projective threefold. Suppose that $X$ is uniruled with MRC fibration $f: X \dasharrow S$ to the smooth projective
surface $S$. Then the pull-back
$$  f^*:  H^0(S, \Omega^1_S \otimes K_S) \to H^0(X,\Omega^1_X \otimes \Omega^2_X)$$
is an isomorphism. 
\end{proposition}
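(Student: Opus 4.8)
The map $f^*$ is injective because $f$ is dominant, so a nonzero tensor differential on $S$ pulls back to a nonzero one on $X$; the real content is \emph{surjectivity}, i.e.\ that every global section of $\Omega^1_X \otimes \Omega^2_X$ descends to $S$. The plan is to reduce to the case where the MRC map is a genuine morphism, compute the relevant pushforward over the open locus where $f$ is a smooth $\mathbb P_1$-fibration, and then control the behaviour over the discriminant. First I would replace $X$ by a smooth birational model on which the MRC map becomes a morphism $f:X\to S$; since the general fibre of the MRC fibration \cite{Kol96} is rationally connected of dimension one, it is $\simeq \mathbb P_1$. This modification does not affect the groups in question: $H^0(-,\Omega^1\otimes \Omega^2)$ of a smooth projective variety is a birational invariant (equivalently one works throughout with reflexive pullbacks in the sense of \cite{GKKP11}), and the same holds for $H^0(-,\Omega^1\otimes K)$ on the surface $S$.

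Over the open set $U\subseteq S$ where $f$ is smooth, the relative cotangent sequence
\[ 0 \to f^*\Omega^1_S \to \Omega^1_X \to K_{X/S} \to 0 \]
(using $\Omega^1_{X/S}\simeq K_{X/S}$ in relative dimension one) induces a two-step filtration of $\Omega^2_X$ with graded pieces $f^*K_S$ and $f^*\Omega^1_S\otimes K_{X/S}$. Tensoring by $\Omega^1_X$ and applying $f_*$, I would invoke the vanishings $f_*K_{X/S}^{\otimes k}=0$ for $k\geq 1$ (the general fibre being $\mathbb P_1$) together with $f_*\sO_X=\sO_S$ and the projection formula. Every graded piece except the one built from $f^*\Omega^1_S\otimes f^*K_S$ then has trivial pushforward, so that
\[ f_*(\Omega^1_X\otimes \Omega^2_X)\vert_U \simeq (\Omega^1_S\otimes K_S)\vert_U, \]
and the natural map $f^*(\Omega^1_S\otimes K_S)\to \Omega^1_X\otimes \Omega^2_X$ coming from the cotangent maps $f^*\Omega^p_S\to \Omega^p_X$ realizes this isomorphism and induces precisely $f^*$ on sections.

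The hard part will be to extend this identification across the discriminant $B=S\setminus U$, whose preimage $f^{-1}(B)$ is a \emph{divisor} in $X$, so that a priori a global section of $\Omega^1_X\otimes \Omega^2_X$ could pick up extra contributions supported on $f^{-1}(B)$. To rule this out I would exploit rational connectedness of the general fibre conceptually. Writing
\[ \Omega^1_X\otimes \Omega^2_X \simeq \mathbb S_{(2,1)}\Omega^1_X \oplus \textstyle\bigwedge^3\Omega^1_X = \mathbb S_{(2,1)}\Omega^1_X \oplus K_X, \]
and using $H^0(X,K_X)=0$ because $X$ is uniruled, the problem reduces to the hook-Schur summand $\mathbb S_{(2,1)}\Omega^1_X$; correspondingly $\Omega^1_S\otimes K_S \simeq \mathbb S_{(2,1)}\Omega^1_S$ since $\bigwedge^3$ vanishes in rank two. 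For this summand the restriction to a general fibre $\mathbb P_1$ is concentrated in the purely horizontal part $f^*(\Omega^1_S\otimes K_S)$, so any global section is vertical-free and descends to a section of $\Omega^1_S\otimes K_S$ over $U$; holomorphicity of the original section across $f^{-1}(B)$ then forces the descended section to extend holomorphically across $B$.

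In other words, the crux is the descent of tensor (Schur) differentials to the MRC base, which is exactly the reflexive-extension mechanism of \cite{GKKP11} applied on the model above, and it yields surjectivity of $f^*$. I expect the only genuinely delicate point to be this last extension over the divisor $f^{-1}(B)$: the pushforward computation gives the isomorphism only generically over $S$, and converting ``no vertical sections on the general fibre'' into ``no extra global sections along $f^{-1}(B)$'' is where rational connectedness of the fibres, rather than mere fibre dimension, must be used.
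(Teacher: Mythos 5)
Your plan correctly isolates where the difficulty sits, but it does not close it, and the mechanism you invoke to close it would not work as stated. On a smooth model on which the MRC map becomes a morphism $f\colon X\to S$, the relative cotangent sheaf $\Omega^1_{X/S}$ (and hence every ``vertical'' graded piece of your filtration of $\Omega^1_X\otimes\Omega^2_X$) acquires torsion supported on the non-smooth locus of $f$, which in general contains divisors: components of $f^{-1}(B)$ over the discriminant and $f$-contracted divisors. So the argument ``the vertical pieces are negative on the general fibre, hence have no sections'' only kills a putative section on a dense open set; without torsion-freeness it does not follow that the section vanishes, and a global section of $\Omega^1_X\otimes\Omega^2_X$ could a priori map to a nonzero torsion section of the quotient, obstructing descent to $f^*(\Omega^1_S\otimes K_S)$. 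The ``reflexive-extension mechanism of \cite{GKKP11}'' concerns extending reflexive differentials on the base across singularities and does not address this torsion on the total space; you yourself flag this step as the one you have not carried out. (Note also that once descent to $f^*(\Omega^1_S\otimes K_S)$ is achieved, the extension over $B$ is automatic from the projection formula and $f_*\sO_X=\sO_S$; the genuine issue is the descent, not the extension.)

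The paper closes exactly this gap by a structural reduction you do not make: it runs a relative MMP over $S$ to replace $X$ by a Mori fibre space $f'\colon X'\to S$, checking along the way that $H^0$ of $(\Omega^1\otimes\Omega^2)^{**}$ is unchanged by the intermediate divisorial contractions and flips. On $X'$ the map $f'$ is equidimensional and a submersion outside a set of codimension two, so $\Omega^1_{X'/S}$ and the relevant quotient $\sS$ are torsion free; then ``negative on the general fibre'' does imply ``no global sections'' for these sheaves, and surjectivity of $(f')^*$ follows from the two exact sequences with no separate analysis over the discriminant. If you want to keep your set-up (arbitrary smooth model, computation over $U$, then extension), you must either prove that the torsion contributions along $f^{-1}(B)$ cannot receive a nonzero global section of $\Omega^1_X\otimes\Omega^2_X$, or perform a reduction (such as the MMP one) that removes them; as written, the proposal is a correct outline with its central step missing.
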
 

\begin{proof}
Note that $\dim h^0(X,\Omega^1_X \otimes \Omega^2_X) $ is a birational invariant of smooth projective manifolds. Hence we may assume that $f$ is a morphism. 
Clearly, $f^*$ is injective. Running a relative MMP, we obtain a factorization
$$ X \dasharrow X' \to S,$$
where $X \dasharrow X'$ is a sequence of relative contractions and flips, and where $f': X' \to S$ is a Mori fiber space. \\
We proceed with the following observations. 
If $Y$ and $Z$ are normal projective varieties with terminal singularities and if $\varphi: Y \to Z$ is a divisorial contraction, then 
$$\varphi_*((\Omega^1_Y \otimes \Omega^2_Y)^{**}) \subset (\Omega^1_Z \otimes \Omega^2_Z)^{**}.$$ 
Moreover, if $\varphi: Y \dasharrow Z$ is a flip, then 
$$ H^0(Y,(\Omega^1_Y \otimes \Omega^2_Y)^{**}) = H^0(Z, (\Omega^1_Z \otimes \Omega^2_Z)^{**}).$$
Hence it suffices to consider $f': X' \to S$, which is equidimensional and a conic bundle outside a finite set of $S$, i.e., there is a finite set $A \subset S$ such that $f': X' \setminus (f')^{-1}(A) \to S \setminus A$ 
is a conic bundle, 
and we need to prove that 
$$ (f')^*: H^0(S, \Omega^1_S \otimes \Omega^2_S) \to H^0(X', (\Omega^1_{X'} \otimes \Omega^2_{X'})^{**}) $$
is surjective. 
Since $f'$ is a submersion outside a set of dimension at most one,  $\Omega^1_{X'/S}$ is torsion free. We will use the exact sequences
$$ 0 \to f'^*(\Omega^1_S) \to \Omega^1_{X'} \to \Omega^1_{X'/S} \to 0$$
and 
$$ 0 \to f'^*(\Omega^2_S) \to \Omega^{[2]}_{X'} \to \sS \to 0, $$
where 
 $\sS$ is a torsion free sheaf with $\sS \vert X'_{0} = (f'^*(\Omega^1_S) \otimes \Omega^1_{X'/S}) \vert X'_{0}$
and where $X'_0$ is the regular locus of $X'$ and $f'$.
Now we observe that 
$$  H^0(X',  \big( \Omega^1_{X'/S} \otimes \Omega^{[2]}_{X'} \big) ^{**}) = 0 $$
and 
$$ H^0(X', \sS \otimes (f')^*(\Omega^1_S) ) = 0.$$
Indeed, both sheaves in question are negative on the general fiber of $f'$. 
Then the assertion follows, tensoring the first exact sequence with $\Omega^{[2]}_{X'}$, and then the second with $(f')^*(\Omega^1_S)$ and computing on $X'_0$. 

\end{proof}

Proposition \ref{blow1} suggests to proceed by induction on the Picard number $\rho(X)$, performing an MMP. In this context, we notice that 
in a similar way as in Proposition \ref{blow1},  it possible to treat the other contraction of extremal rays on threefolds
which contract a divisor $E$ to a point. This opens a way to reduce the problem to Mori fiber spaces and threefolds with nef canonical bundles (possibly singular). 


Blowing up curves is more complicated; we restrict ourselves to first order deformations.

\begin{theorem} \label{june20} Let $Y$ be a smooth projective threefold such that $$h^0(Y,T_Y \otimes \Omega^1_Y) = 1.$$ Let $\pi: X \to Y$ be the blow-up of a 
smooth curve $C \subset Y$. Assume that 
\begin{equation} \label{g} -K_Y \cdot C \geq 2. \end{equation} 
Then $H^1(X,\sEnd_0(T_X)) \ne 0$. 

\end{theorem}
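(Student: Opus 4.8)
The plan is to apply Corollary \ref{prop:bir1A} to the exceptional divisor $E = \pi^{-1}(C)$, exactly as in the proof of Theorem \ref{thm:gendiv}. This means I must verify two things: first, that $H^0(E, T_E) \neq 0$, and second, that $h^0(X, \sEnd_0(T_X)) = h^0(X, \mathcal R)$, where $\mathcal R = (\Omega^1_X(\log E) \otimes T_X)/\sO_X$. The second condition will again be handled by the argument via Riemann's extension theorem: since the isomorphism $X \setminus E \simeq Y \setminus C$ gives $H^0(X \setminus E, \mathcal R) = H^0(Y \setminus C, \sEnd_0(T_Y)) = H^0(Y, \sEnd_0(T_Y))$, and since our hypothesis $h^0(Y, T_Y \otimes \Omega^1_Y) = 1$ forces $h^0(Y, \sEnd_0(T_Y)) = 0$, the torsion-freeness of $\mathcal R$ yields $H^0(X, \mathcal R) = 0 = h^0(X, \sEnd_0(T_X))$, giving condition (2). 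So the real content is condition (1).

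The first condition, $H^0(E, T_E) \neq 0$, is where the genus of $C$ and the inequality $-K_Y \cdot C \geq 2$ enter, and this is where the present theorem genuinely differs from Theorem \ref{thm:gendiv} (which assumed $C$ rational, so that $E$ is a ruled surface automatically carrying vector fields). Here $C$ may have any genus. I would analyze $E = \mathbb P(N^*_{C/Y})$ as a ruled surface over $C$ via the relative tangent sequence $0 \to T_{E/C} \to T_E \to p^*(T_C) \to 0$, where $p : E \to C$ is the projection. The plan is to produce a section of $T_E$ either from a relative vector field (a section of $T_{E/C}$, coming from $H^0(C, N_{C/Y} \otimes N^*_{C/Y})$ beyond the trivial summand, i.e.\ from $N_{C/Y}$ being non-simple) or, if $C$ has a vector field that lifts, from $p^*(T_C)$.

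The key computation is to bound $h^0(E, T_E)$ from below using Riemann--Roch on the ruled surface. One gets $\chi(E, T_E) = \chi(E, T_{E/C}) + \chi(E, p^*T_C)$; the relative piece is governed by $\pi_* T_{E/C}$, essentially $S^2(N^*_{C/Y}) \otimes \det N_{C/Y}$ up to twist, whose degree involves $\deg N_{C/Y} = -K_Y \cdot C + 2g - 2$ by adjunction. The hypothesis $-K_Y \cdot C \geq 2$ is designed to make this degree large enough to force sections. I expect the main obstacle to be controlling the interplay between the genus $g$ of $C$ and the splitting type of $N_{C/Y}$: when $g = 0$ the curve $C$ is rational and one reduces to Theorem \ref{thm:gendiv}, but for $g \geq 1$ one must rule out the possibility that neither $T_{E/C}$ nor $p^* T_C$ contributes a global section, and the inequality $-K_Y \cdot C \geq 2$ (equivalently $\deg N_{C/Y} \geq 2g$) is exactly the numerical threshold guaranteeing $h^0(E, T_{E/C}) > 0$ via positivity of the relevant symmetric power. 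Once $H^0(E, T_E) \neq 0$ is established, the residue sequence argument of Corollary \ref{prop:bir1A} closes the proof.
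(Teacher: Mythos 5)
Your reduction to Corollary \ref{prop:bir1A} is sound in its second step (the Riemann-extension argument giving $h^0(X,\sEnd_0(T_X)) = h^0(X,\mathcal R) = 0$ carries over verbatim from Theorem \ref{thm:gendiv}), but the first step contains a genuine gap: the condition $H^0(E,T_E)\ne 0$ does \emph{not} follow from $-K_Y\cdot C\ge 2$. Your Riemann--Roch heuristic is miscalibrated. Writing $E=\mathbb P(V)$ with $V=N^*_{C/Y}$, one has $p_*T_{E/C}\simeq S^2V\otimes\det V^*$, and for a rank-two bundle this is the degree-zero summand of $\sEnd(V)=S^2V\otimes\det V^*\oplus\sO_C$; in particular $h^0(E,T_{E/C})=h^0(C,\sEnd(N_C))-1$ is completely insensitive to $\deg N_{C/Y}$, since twisting $N_C$ by a line bundle does not change its endomorphism bundle. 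If $g(C)\ge 2$ and $N_{C/Y}$ is simple (e.g.\ stable), then $h^0(E,T_{E/C})=0$ and $H^0(C,T_C)=0$, so $H^0(E,T_E)=0$ and your criterion gives nothing. The paper is explicitly aware of this: Corollary \ref{corelli} records that for $g\ge 2$ the extension of Theorem \ref{thm:gendiv} requires the \emph{additional} hypothesis that $N_C$ is not simple. So the inequality $-K_Y\cdot C\ge 2$ is not "the numerical threshold guaranteeing $h^0(E,T_{E/C})>0$"; it plays no role at all in that quantity.

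The paper's actual proof avoids $E$ entirely and works downstairs on $Y$. It uses the exact sequence $0\to\pi_*(\sEnd_0(T_X))\to\sEnd_0(T_Y)\to Q\to 0$ with $Q$ supported on $C$: since $H^0(Y,\sEnd_0(T_Y))=0$ by hypothesis, $H^0(Y,Q)$ injects into $H^1(Y,\pi_*(\sEnd_0(T_X)))\subset H^1(X,\sEnd_0(T_X))$, so it suffices to produce a section of $Q$. A diagram chase identifies $Q$ as a quotient of $N_C\otimes\Omega^1_Y|_C$ by a subsheaf with exactly one section, so the problem reduces to $h^0(C,N_C\otimes\Omega^1_Y|_C)\ge 2$. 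This is the bundle whose degree genuinely sees the hypothesis: Riemann--Roch on this rank-six bundle gives $\chi(C,N_C\otimes\Omega^1_Y|_C)=-K_Y\cdot C\ge 2$. If you want to repair your argument, you would need to replace the appeal to $H^0(E,T_E)$ by a section of $T_X|_E$ (as in Proposition \ref{prop:bir0A}, which only needs $H^0(E,T_X|_E)\ne 0$) coming from the normal directions, which is essentially what the paper's computation with $N_C\otimes\Omega^1_Y|_C$ accomplishes.
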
 

\begin{proof} It suffices to show that 
$$ H^1(Y,\pi_*(\sEnd_0(T_X)) \ne 0.$$ 
To prove this, we consider the canonical exact sequence
$$ 0 \to \pi_*(\sEnd_0(T_X)) \to \sEnd_0(T_Y) \to Q \to 0, $$
where $Q$ is a coherent sheaf supported on $C$. 
Then things comes down to show that 
\begin{equation} \label{march9} H^0(Y,Q) \ne 0. \end{equation} 
The sheaf $Q$ appears as well in the exact sequence 
$$ 0 \to \pi_*(T_X \otimes \Omega^1_X) \to T_{Y} \otimes \Omega^1_{ Y} \to Q \to 0,$$
and we shall work with this sequence. 
Set $E = \pi^{-1}(C)$. The normal bundle of $C $ in $Y$ will simply be denoted $N_C$. 
Using the exact sequences
$$ 0 \to \pi_*(T_X) \otimes \Omega^1_{Y} \to \pi_*(T_X \otimes \Omega^1_X) \to \pi_*(T_X \vert E \otimes \Omega^1_{E/C}) \to 0 $$
and
$$ 0 \to \pi_*(T_X) \otimes \Omega^1_{Y} \to  T_{Y} \otimes \Omega^1_{Y} \to N_C \otimes \Omega^1_{Y} \to 0, $$
a diagram chase yields an exact sequence
$$ 0 \to \pi_*(T_X \vert E \otimes \Omega^1_{E/C} ) \to N_C \otimes \Omega^1_{Y} \vert C \to Q \to 0. $$ 
Since $$ h^0(C, \pi_*(T_X \vert E \otimes \Omega^1_{E/C}) ) = h^0(E, T_X \vert E \otimes \Omega^1_{E/C} ) = $$
$$ = h^0(E,T_E \otimes \Omega^1_{E/C}) = h^0(E, T_{E/C} \otimes \Omega^1_{E/C}) = h^0(E,\sO_E) = 1,$$
Equation (\ref{march9}) comes therefore down to show that
\begin{equation} \label{blow4}  h^0(C,N_C \otimes \Omega^1_{Y} \vert C) \geq 2.\end{equation} 
This follows the stronger inequality
\begin{equation} \label{blow5} \chi(C,N_C \otimes \Omega^1_{Y} \vert C) \geq 2,\end{equation}
which by Riemann-Roch is equivalent to 
$$ 6(1-g) + c_1(N_C \otimes {\Omega^1_Y} \vert C) \geq 2.$$
This is just our assumption via the adjunction formula. 
\end{proof}

\begin{corollary}  Let $Y$ be a Fano threefold such that $h^0(Y,T_Y \otimes \Omega^1_Y) = 1.$ Let $\pi: X \to Y$ be the blow-up of a 
smooth curve $C \subset Y$. Then $$H^1(X,\sEnd_0(T_X)) \ne 0.$$
\end{corollary}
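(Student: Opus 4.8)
The plan is to deduce this corollary directly from Theorem \ref{june20}, so the entire task reduces to verifying that a Fano threefold $Y$ with $h^0(Y, T_Y \otimes \Omega^1_Y) = 1$ automatically satisfies the hypothesis $-K_Y \cdot C \geq 2$ for \emph{every} smooth curve $C \subset Y$. First I would recall that for a Fano threefold the anticanonical divisor $-K_Y$ is ample by definition. Ampleness means that $-K_Y \cdot C > 0$ for every irreducible curve $C$, and since intersection numbers of integral classes are integers, this forces $-K_Y \cdot C \geq 1$. The only thing standing between this and the desired bound is ruling out the borderline case $-K_Y \cdot C = 1$.

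The key step is therefore to argue that $-K_Y \cdot C = 1$ cannot occur, or else to handle it separately. Here I would invoke the standard fact from the theory of Fano threefolds that $-K_Y \cdot C \geq 2$ for all curves; more precisely, one can use that $-K_Y$ generates (a finite-index subgroup of) the relevant part of the Picard lattice together with the classification of extremal rays. An even cleaner route is to note that for a smooth Fano threefold the anticanonical system $|-K_Y|$ is very ample after passing to $-mK_Y$ and, by the Riemann--Roch / adjunction argument, every curve meets $-K_Y$ in degree at least $2$; alternatively, if the index of $Y$ is $r \geq 2$ then $-K_Y = rH$ and $-K_Y \cdot C = r (H \cdot C) \geq 2$ immediately. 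The remaining index-one case is where the argument needs the most care.

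Once the numerical inequality $-K_Y \cdot C \geq 2$ is established, the conclusion is immediate: the hypotheses of Theorem \ref{june20} are met verbatim---$Y$ is a smooth projective threefold with $h^0(Y, T_Y \otimes \Omega^1_Y) = 1$, $\pi : X \to Y$ is the blow-up of the smooth curve $C$, and the condition \eqref{g} holds---so the theorem yields $H^1(X, \sEnd_0(T_X)) \neq 0$, which is exactly the assertion.

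The main obstacle will be the index-one case of the inequality $-K_Y \cdot C \geq 2$: when $Y$ has index one, $-K_Y$ is a primitive ample class and one must genuinely exclude the existence of a curve $C$ with $-K_Y \cdot C = 1$. I expect this to follow from the fact that such a curve would be a smooth rational curve (a line with respect to the anticanonical polarization) whose existence is constrained by the classification, and in any case one can fall back on the hypothesis $h^0(Y, T_Y \otimes \Omega^1_Y) = 1$ itself: stability of $T_Y$, which this condition encodes, places strong restrictions on $Y$. If the clean index argument does not cover all cases, the fallback is to observe that any potential exceptional curve with $-K_Y \cdot C = 1$ can be treated by a direct modification of the Riemann--Roch computation \eqref{blow5} in the proof of Theorem \ref{june20}, checking whether $h^0(C, N_C \otimes \Omega^1_Y|_C) \geq 2$ persists in that boundary situation.
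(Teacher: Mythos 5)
Your reduction to Theorem \ref{june20} works whenever $-K_Y\cdot C\geq 2$, but the central numerical claim you lean on --- that every smooth curve on a Fano threefold satisfies $-K_Y\cdot C\geq 2$ --- is false, and this is a genuine gap. For index-one Fano threefolds the class $-K_Y$ is primitive and curves with $-K_Y\cdot C=1$ (anticanonical lines) really do exist: a line on a smooth quartic threefold in $\mathbb P_4$ is the standard example, and by a theorem of Shokurov every Fano threefold of index one contains such lines. So the borderline case cannot be excluded; it must be handled. Your fallback of rerunning the Riemann--Roch computation \eqref{blow5} also fails numerically: for $-K_Y\cdot C=1$ the curve is forced to be rational, and then
$$\chi(C,N_C\otimes\Omega^1_Y\vert C)=6(1-g)+3\deg N_C+2\,K_Y\cdot C=6-3-2=1,$$
which falls short of the required bound $2$, so \eqref{blow4} does not follow from \eqref{blow5} in this case.

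The paper closes the gap by a different mechanism that you do not invoke. When $-K_Y\cdot C=1$ the threefold has index one and (in almost all cases) $-K_Y$ is spanned; the anticanonical morphism maps $C$ isomorphically onto a line, so $C\simeq\mathbb P_1$. One then applies Theorem \ref{thm:gendiv}, which covers precisely the blow-up of a \emph{smooth rational curve} in a manifold with $h^0(Y,T_Y\otimes\Omega^1_Y)=1$, with no numerical hypothesis on $-K_Y\cdot C$ at all. (The two exceptional Fano threefolds where $-K_Y$ is not spanned are checked by hand.) To repair your argument you should replace the attempt to prove $-K_Y\cdot C\geq 2$ unconditionally by this case split: $-K_Y\cdot C\geq 2$ handled by Theorem \ref{june20}, and $-K_Y\cdot C=1$ handled by showing $C$ is rational and citing Theorem \ref{thm:gendiv}.
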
 

\begin{proof} It remains to treat the case that $-K_Y \cdot C = 1$. But then $C$ is a smooth rational curve, and Theorem \ref{thm:gendiv} applies. 
In fact, if $-K_Y \cdot C = 1$, then $Y$ must have index one. In almost all cases, $-K_Y$ is spanned, hence defines a morphism $\varphi: Y \to \mathbb P_N$
such that $\varphi^*(\sO_{\mathbb P_N})(1) = \sO_Y(-K_Y)$. Hence $\varphi(C)$ must be a line $\ell$ and $C \to \ell$ is an isomorphism. 
There are only two exceptional cases, \cite[p.49]{IP99} which can be checked by hand. 

\end{proof}

Keeping track of the $H^1$-term in $\chi(N_C \otimes \Omega^1_Y \vert C)$ and using Serre duality, the proof of Theorem \ref{june20} actually shows the slightly 
stronger 

\begin{corollary} Let $Y$ be a smooth projective threefold such that $$h^0(Y,T_Y \otimes \Omega^1_Y) = 1.$$ Let $\pi: X \to Y$ be the blow-up of a 
smooth curve $C \subset Y$. If 
$$ -K_Y \cdot C + h^0(N_C \otimes \Omega^2_Y \vert C) \geq 2,$$ 
then $H^1(X,\sEnd_0(T_X)) \ne 0$. 
\end{corollary}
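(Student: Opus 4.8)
The plan is to rerun the proof of Theorem \ref{june20} verbatim up to the point where everything has been reduced to the inequality (\ref{blow4}), namely
$$ h^0(C, N_C \otimes \Omega^1_Y \vert C) \geq 2, $$
and then to replace the crude bound $h^0 \geq \chi$ by the exact Riemann--Roch identity on the curve $C$ combined with Serre duality. Concretely, for the rank $6$ bundle $N_C \otimes \Omega^1_Y \vert C$ on $C$ one has
$$ h^0(C, N_C \otimes \Omega^1_Y \vert C) = \chi(C, N_C \otimes \Omega^1_Y \vert C) + h^1(C, N_C \otimes \Omega^1_Y \vert C), $$
so it suffices to identify both summands on the right-hand side with the two terms of the hypothesis.

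First I would recompute $\chi$ exactly as in Theorem \ref{june20}. Riemann--Roch on $C$ gives $\chi = 6(1-g) + c_1(N_C \otimes \Omega^1_Y \vert C)$, and using the adjunction formula $\det N_C \simeq (-K_Y)\vert C \otimes K_C$ together with $\deg(\Omega^1_Y \vert C) = K_Y \cdot C$ one finds $c_1(N_C \otimes \Omega^1_Y \vert C) = -K_Y \cdot C + 6g - 6$, hence
$$ \chi(C, N_C \otimes \Omega^1_Y \vert C) = -K_Y \cdot C. $$
This is precisely the first term of the assumption.

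The key step is then to show that the remaining summand is the second term of the hypothesis, that is,
$$ h^1(C, N_C \otimes \Omega^1_Y \vert C) = h^0(C, N_C \otimes \Omega^2_Y \vert C). $$
By Serre duality on $C$, the left-hand side equals $h^0(C, N_C^* \otimes T_Y \vert C \otimes K_C)$. Here I would invoke the rank-two identity $N_C^* \simeq N_C \otimes (\det N_C)^{-1}$, which combined with $\det N_C \simeq (-K_Y)\vert C \otimes K_C$ gives $N_C^* \otimes K_C \simeq N_C \otimes K_Y\vert C$. Therefore
$$ N_C^* \otimes T_Y \vert C \otimes K_C \simeq N_C \otimes (T_Y \otimes K_Y)\vert C \simeq N_C \otimes \Omega^2_Y \vert C, $$
where the last isomorphism uses the threefold identity $\Omega^2_Y \simeq T_Y \otimes K_Y$. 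This establishes the displayed equality.

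Combining the two computations yields $h^0(C, N_C \otimes \Omega^1_Y \vert C) = -K_Y \cdot C + h^0(C, N_C \otimes \Omega^2_Y \vert C) \geq 2$ by hypothesis, which is (\ref{blow4}), and the conclusion $H^1(X, \sEnd_0(T_X)) \ne 0$ then follows word for word as in Theorem \ref{june20}. The only genuinely new ingredient, and the one place where care is needed, is the chain of canonical isomorphisms in the key step; in particular the use of $N_C^* \simeq N_C \otimes (\det N_C)^{-1}$ is special to rank two and is exactly what makes the Serre dual of $N_C \otimes \Omega^1_Y\vert C$ collapse back onto a twist of $N_C$ itself rather than onto an unrelated bundle.
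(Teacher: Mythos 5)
Your proposal is correct and is precisely the argument the paper indicates: it reruns the proof of Theorem \ref{june20} down to inequality (\ref{blow4}) and replaces the bound $h^0 \geq \chi$ by the exact identity $h^0(C,N_C\otimes\Omega^1_Y\vert C)=\chi+h^1$, computing $\chi=-K_Y\cdot C$ via adjunction and identifying $h^1(C,N_C\otimes\Omega^1_Y\vert C)=h^0(C,N_C\otimes\Omega^2_Y\vert C)$ by Serre duality together with the rank-two isomorphism $N_C^*\simeq N_C\otimes(\det N_C)^{-1}$ and $\Omega^2_Y\simeq T_Y\otimes K_Y$. The paper's stated proof merely adds the (logically superfluous for your uniform argument) remark that the case $g=0$ is already covered by Theorem \ref{thm:gendiv}.
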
 

\begin{proof} It suffices to note that in case $ g = 0$, Theorem \ref{thm:gendiv} applies. 
\end{proof} 

\section{Calabi-Yau threefolds: birational morphisms and flops}

\setcounter{lemma}{0}

We are now turning to Calabi-Yau manifolds, mostly in dimension three. To be precise, a Calabi-Yau manifold is a {\it  simply connected projective manifold $X$ with $K_X \simeq \sO_X$.}  Therefore $T_X$ has a genuine first
order deformation if and only if $H^1(X,T_X \otimes \Omega^1_X) \ne 0$.

\begin{definition} Let $X$ be a Calabi-Yau manifold and $\varphi: X \to Y$ be a birational morphism
to a normal projective variety $Y$. Then $\varphi$ is said to be primitive if the relative Picard number $\rho(X/Y) = 1.$ 
A primitive contraction is divisorial if the exceptional locus $E$ of $\varphi$ has codimension $1$. Then automatically $E$ is
an irreducible divisor. 
\end{definition} 

We first collect a few known results on divisorial contractions in dimension three.
\begin{proposition} \label{prop:contraction} Let $X$ be a Calabi-Yau threefold, $\varphi: X \to Y$ a primitive divisorial birational map 
contracting the irreducible 
divisor $E$. Then the following holds. 
\begin{enumerate} 
\item $K_Y \simeq \sO_Y$ and $Y$ has canonical singularities; we say that $Y$ is a weak Calabi-Yau variety.
\item If $\dim \varphi(E) = 0,$ then $-K_E$ is ample, so $E$ is a (possibly singular) del Pezzo surface.
\item If $\dim \varphi(E) = 1,$ then $C := \varphi(E)$ is a smooth curve. The map $\varphi_{\vert E}$ defines a conic bundle structure on $E$. 
\end{enumerate} 
\end{proposition}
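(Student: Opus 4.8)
The plan is to extract all three assertions from one structural fact: the contraction $\varphi$ is crepant. First I would write $K_X = \varphi^*K_Y + aE$ with $a$ the discrepancy of $E$ (which makes sense once one records that $Y$ is normal and $K_Y$ is $\mathbb Q$-Cartier by the contraction theorem, using $\rho(X/Y)=1$). Intersecting with a curve $\ell$ contained in a fiber of $\varphi$ and combining $K_X\simeq \sO_X$ with $\varphi^*K_Y\cdot \ell = 0$ gives $a\,(E\cdot \ell)=0$; since every contracted curve lies in the $\varphi$-exceptional divisor $E$ and satisfies $E\cdot \ell < 0$ (again because $\rho(X/Y)=1$), we conclude $a=0$. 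Crepancy then yields $\omega_Y \simeq \varphi_*\omega_X = \varphi_*\sO_X = \sO_Y$, so that $Y$ is Gorenstein with trivial canonical class, and the vanishing discrepancy gives canonical singularities. This proves (1), and I would note for later use that $Y$ is Gorenstein.

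The common tool for (2) and (3) is adjunction: since $a=0$ and $K_X\simeq \sO_X$, one has $K_E \simeq (K_X+E)\vert_E \simeq N_{E/X}$. For (2), when $E$ is contracted to a point, $-E$ is $\varphi$-ample, and since $E$ maps to a point, relative ampleness becomes absolute ampleness on the surface $E$, so $-E\vert_E$ is ample. By the displayed formula $-K_E \simeq -E\vert_E$ is then ample, whence $E$ is a (possibly singular, Gorenstein) del Pezzo surface.

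For (3), write $g := \varphi\vert_E : E \to C$. Because $\rho(X/Y)=1$ the fibers of $g$ are irreducible, and for a general fiber $F$ one has $F^2=0$ (two general fibers are disjoint and algebraically equivalent) together with $K_E\cdot F = E\vert_E\cdot F < 0$, since $F$ is contracted. Adjunction on the Gorenstein surface $E$ then reads $2p_a(F)-2 = K_E\cdot F < 0$, forcing $p_a(F)=0$ and $K_E\cdot F=-2$; hence the general fiber is a smooth rational curve and $g$ is a conic bundle.

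The hard part will be the smoothness of $C$. This does not follow from the numerical bookkeeping above, and it is precisely the point at which a crepant divisorial contraction to a curve differs from an ordinary blow-up (a blow-up of a smooth curve has discrepancy $+1$ and forces $Y$ to be smooth along a smooth $C$). Here $Y$ is genuinely singular along $C$, so I expect to need the local analytic structure of crepant divisorial contractions of Calabi--Yau threefolds to a curve, which I would import from the classification of primitive contractions and the standard theory of Gorenstein threefold contractions rather than reprove from scratch.
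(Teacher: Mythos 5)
Your proposal and the paper take genuinely different routes: the paper offers no argument at all for this proposition, referring entirely to Wilson's papers, whereas you reconstruct the standard proof from the single structural fact that $\varphi$ is crepant. Your treatment of (1) (discrepancy $a=0$ from $K_X\simeq\sO_X$ and $E\cdot\ell<0$, hence $K_Y\simeq\sO_Y$ and canonical singularities) and of (2) (adjunction $K_E\simeq (K_X+E)\vert_E\simeq E\vert_E$ on the Gorenstein divisor $E$, combined with relative ampleness of $-E$ becoming absolute over a point) is correct and is essentially what one finds in Wilson \cite{Wi92}. For (3), the adjunction computation $2p_a(F)-2=K_E\cdot F=-2$ on the general fiber is also the right argument, and your decision to import the smoothness of $C$ from the classification literature is defensible, since the paper itself proves nothing here. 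What your approach buys is a self-contained derivation of (1) and (2) and of the numerical part of (3); what it costs is that the genuinely delicate point (smoothness of $C$) remains outsourced, exactly as in the paper.

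One intermediate claim in your part (3) is false and you should delete it: $\rho(X/Y)=1$ does \emph{not} imply that the fibers of $g=\varphi\vert_E$ are irreducible. It only implies that all contracted curves are numerically proportional in $X$; Wilson's classification (used explicitly in the paper's proof of Theorem \ref{thm:ruled}) includes singular fibers that are line pairs $C_1\cup C_2$ and even the case where the \emph{general} fiber is reducible, the normalization of $E$ then being a $\mathbb P_1$-bundle over a double cover of $C$. Fortunately your argument does not actually use irreducibility: a general fiber $F$ is connected (since $\varphi_*\sO_X=\sO_Y$) with $F^2=0$ and $p_a(F)=0$, $-K_E\cdot F=2$, which makes it a conic --- possibly a line pair or a double line --- and that is all the statement ``conic bundle'' requires. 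So the error is harmless, but as written it would mislead a reader about the structure of $E$.
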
 

\begin{proof} We refer to the fundamental papers of Wilson \cite{Wi92}, \cite{Wi93}, \cite{Wil94}, \cite{Wi97}, \cite{Wil99}. 
\end{proof} 

A primitive contraction might also be a small birational contraction, in which case the exceptional locus is a finite union of smooth rational curves. This case will be treated at the end
of this section. Or we have $\dim Y= 1$ or $2$; then $\varphi$ is a K3-fibration, an abelian fibration or an elliptic fibration. These cases will be treated in a different paper. 
Note also that it is expected that a Calabi-Yau threefold $X$ with $\rho(X) \geq 2$ should always admit a (non-trivial) contraction. This follows from two standard conjectures, namely
that the Mori cone $\overline{NE}(X)$ is locally rational polyhedral and that nef line bundles on Calabi-Yau manifolds should be semiample; see e.g. the above cited papers of Wilson or
the survey \cite{LOP18}. 

Recall that, given be a normal complex algebraic variety $X$,  then 
$$ \Omega^{[q]}_X := (\bigwedge^q \Omega^1_X)^{**}$$
is the sheaf of reflexive $q$-forms. Further, 
if $X$ has canonical singularites and if $\pi: \hat X \to X$ is a desingularization, then by \cite{GKKP11}, 
$$  \Omega^{[q]}_X = \pi_*(\Omega^q_{\hat X}).$$

\begin{lemma} \label{lem:stability}
Let $X$ be a Calabi-Yau manifold, $\varphi: X \to Y$ a primitive birational morphism to a normal projective variety, contracting an irreducible divisor $E$. 
Then the following holds. 
\begin{enumerate} 
\item The reflexive cotangent sheaf $\Omega^{[1]}_Y $ is $H$-polystable for any ample divisor $H$ on $X.$ 
\item If $\Omega^{[1]}_Y $ is not $H$-stable for some ample divisor $H$, then there exists a quasi-\'etale cover $\eta: Z \to Y$ such that 
$Z$ decomposes into a product an abelian variety and (possibly singular) Calabi-Yau varieties and irreducible symplectic varieties (in the sense of \cite{GKP16})
(possibly $Z$ is just an abelian variety or there is no abelian factor).
\item Assume $n = 3$ and that $\Omega^{[1]}_Y $ is not $H$-stable for some ample divisor $H$. Then $\dim \varphi(E) = 1$ and there exists a
quasi-\'etale cover $\eta: Z \to Y$ such that either $Z$ is an abelian threefold or $Z = B \times S$ with $Z$ an elliptic curve and $S$ a K3-surface (possibly with rational
double points). 
\end{enumerate} 
\end{lemma}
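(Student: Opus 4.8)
The plan is to prove the three assertions of Lemma \ref{lem:stability} by exploiting the interplay between polystability of the reflexive cotangent sheaf $\Omega^{[1]}_Y$ and the structure theory for canonical Calabi-Yau type varieties, using crucially that $K_Y \simeq \sO_Y$ (from Proposition \ref{prop:contraction}(1)) so that $Y$ is a weak Calabi-Yau variety with canonical singularities and numerically trivial canonical class.

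For assertion (1), I would invoke the Bogomolov--Gieseker type polystability result available for reflexive sheaves on klt varieties whose reflexive cotangent sheaf is the cotangent sheaf of a $\KQ$-Gorenstein variety with $c_1 = 0$. The key input is that $Y$ has canonical singularities and $K_Y$ is (numerically) trivial; by the generic semipositivity / stability results for cotangent sheaves on minimal varieties with trivial canonical class (following Guenancia, Greb--Kebekus--Peternell, and the foundational circle of ideas in \cite{GKKP11} and \cite{GKP16}), the sheaf $\Omega^{[1]}_Y$ is $H$-semistable for every ample $H$, and in fact polystable because $c_1(\Omega^{[1]}_Y) = K_Y \equiv 0$, so any destabilizing subsheaf of slope $0$ splits off as a direct summand. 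The point I would emphasize is that slope-zero together with semistability forces polystability, reducing (1) to the semistability assertion, which is where the deep theory enters.

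For assertion (2), the plan is to apply the Beauville--Bogomolov type decomposition theorem for singular varieties with trivial canonical class, in the form established in \cite{GKP16} (and its sequels), precisely under the failure of stability. If $\Omega^{[1]}_Y$ is polystable but not stable, it splits nontrivially, $\Omega^{[1]}_Y \simeq \bigoplus_i \sG_i$ with each $\sG_i$ stable of slope zero. This holonomy decomposition corresponds, after passing to a suitable quasi-\'etale cover $\eta: Z \to Y$, to a product decomposition of $Z$ into building blocks: an abelian variety (coming from flat/unitary summands with vanishing second Chern class) together with (possibly singular) Calabi-Yau and irreducible holomorphic symplectic factors (coming from the irreducible stable summands with nontrivial holonomy). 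I would cite the singular decomposition theorem directly; the content is that the tangent-sheaf decomposition integrates, after the quasi-\'etale cover, to a genuine product.

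For assertion (3), I specialize to $n = 3$ and combine (2) with the numerical constraints coming from the divisorial contraction. First, if $\dim \varphi(E) = 0$, then by Proposition \ref{prop:contraction}(2) the divisor $E$ is contracted to a point and $-K_E$ is ample, which I expect to force $\Omega^{[1]}_Y$ to be stable (a point-contraction produces an isolated canonical singularity whose local structure obstructs any global splitting); hence non-stability forces $\dim \varphi(E) = 1$. Then the three-dimensional decomposition from (2) leaves only two possibilities by dimension count: either $Z$ is an abelian threefold (all factors flat), or $Z = B \times S$ with $B$ an elliptic curve and $S$ a K3 surface (the rank-one abelian factor splitting off a rank-two symplectic factor, which in dimension two is exactly a K3, necessarily with at worst rational double points since it is a canonical surface with trivial canonical class). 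I would rule out a pure Calabi-Yau threefold factor or an irreducible symplectic factor by noting that these are simply connected and admit no quasi-\'etale splitting, contradicting the assumed non-stability.

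\emph{The main obstacle} will be assertion (1): establishing $H$-polystability of $\Omega^{[1]}_Y$ for \emph{every} ample $H$ simultaneously, rather than for a single polarization. Semistability with respect to one polarization is comparatively standard, but polystability that is independent of the chosen ample class requires the full strength of the singular Beauville--Bogomolov machinery and the fact that a slope-zero reflexive sheaf with $c_1 = 0$ on a variety with trivial canonical class has its Harder--Narasimhan and socle filtrations unchanged across the ample cone. I would therefore spend most of the effort on (1), treating (2) and (3) as formal consequences of the decomposition theorem and the contraction geometry, and I would flag that the bridge between the abstract sheaf decomposition in (2) and the concrete low-dimensional classification in (3) relies on the simple connectivity and deformation rigidity of the individual Calabi-Yau and symplectic building blocks.
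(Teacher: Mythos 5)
Your overall strategy for (1) and (2) matches the paper: assertion (1) is a citation of Guenancia's polystability theorem for the reflexive cotangent sheaf of a canonical variety with trivial canonical class, and assertion (2) is the singular Beauville--Bogomolov decomposition (the paper cites \cite{Gu16} and \cite{HP19} respectively), with the same observation that a single Calabi--Yau or irreducible symplectic factor would have stable cotangent sheaf and is therefore excluded. However, there are two genuine problems. First, in (1) you claim that semistability plus slope zero ``forces polystability'' because ``any destabilizing subsheaf of slope $0$ splits off as a direct summand.'' That is false as a general principle: a semistable sheaf of slope zero need not be polystable (a non-split extension of two slope-zero stable sheaves is a counterexample), and the splitting is precisely the non-formal content of Guenancia's theorem, which rests on singular Ricci-flat metrics rather than on a slope computation. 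You may of course cite the polystability directly, but the reduction you propose does not work.

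Second, and more seriously, the only step in the lemma that requires an actual argument rather than a citation is the claim in (3) that non-stability forces $\dim \varphi(E) = 1$, and here you offer no proof: you write that you ``expect'' the isolated canonical singularity produced by a point contraction to obstruct the splitting. The paper's argument is concrete and topological. If $\dim \varphi(E) = 0$, then $Y$ has a single singular point $y_0$; since the cover $Z$ is then smooth with $e_{\rm top}(Z)=0$ (abelian threefold, or elliptic curve times a smooth K3), the multiplicativity of the orbifold Euler characteristic under the quasi-\'etale cover $\eta$ gives $0 = e_{\rm orb}(Z) = \deg(\eta)\cdot e_{\rm orb}(Y)$, while
$$ e_{\rm orb}(Y) = e_{\rm top}(Y) - \Bigl(1 - \frac{1}{{\rm ord}\, G_{y_0}}\Bigr), $$
which cannot vanish because $e_{\rm top}(Y)$ is an integer and $1 - 1/{\rm ord}\,G_{y_0}$ is not. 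Without some argument of this kind (or an equivalent local one), your proof of (3) is incomplete; relatedly, you have misplaced the ``main obstacle'' of the lemma, which lies in this step rather than in the uniformity of (1) over the ample cone.
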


\begin{proof} Assertion (1) has been shown by Guenancia \cite{Gu16}, while Assertion (2) is the main result in \cite{HP19}. Note that either 
$Z$ is abelian or $Z$ has a product decomposition with at least two factor, since Calabi-Yau or irreducible symplectic varieties have stable 
tangent sheaves (even after quasi-\'etale cover). \\
So it remains to prove (3). By (2), it is clear that $Z$ is either abelian or a product $B \times S$ with $B$ an elliptic curve and $S$ a K3-surface. 
So suppose that $\dim \varphi(E) = 0$, i.e., $Y$ has a single singular points $y_0$, a case which we will rule out. 
Note that $Z$ can have at most finitely many singular point, hence if $ Z = C \times S$, 
hence $S$ is smooth. Thus $Z$ always smooth. We consider the orbifold Euler characteristic $e_{\rm orb}(Y)$, see e.g. \cite[2.14]{Bl96}. By \cite[Cor. p.26]{Bl96}, 
$$ e_{\rm orb}(Z)  = \deg(\eta) \cdot e_{\rm orb}(Y).$$
Now  $ e_{\rm orb}(Z) = e_{\rm top}(Z) = 0$.
On the other hand, 
$$ e_{\rm orb}(Y) = e_{\rm top}(Y) - (1 - \frac{1}{{\rm ord G_{y_0}}}),$$
where $G_{y_0}$ is the group attached to the quotient singularity $y_0$,
which is absurd since $e_{\rm top}(Y)$ is an integer. 
\end{proof}

The case $n = 3$ and $E$ is contracted to a curve is more complicated; here we simply state what is needed later. 

\begin{lemma} \label{lem:stability2}
Let $X$ be a Calabi-Yau threefold, $\varphi: X \to Y$ a primitive birational morphism to a normal projective variety, contracting an irreducible divisor $E$
to the smooth curve $C$. If $C \simeq \mathbb P_1$ (or, more generally, the genus $g(C)$ is even) and if $E$ is a $\mathbb P_1$-bundle over $C$, then $\Omega^{[1]}_Y$ is $H$-stable for any ample divisor $H$.
\end{lemma}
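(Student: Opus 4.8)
The plan is to argue by contradiction, combining the classification in Lemma \ref{lem:stability}(3) with an orbifold-Euler-characteristic computation in the same spirit as the one used there to rule out the case $\dim\varphi(E)=0$. So suppose $\Omega^{[1]}_Y$ is \emph{not} $H$-stable for some ample $H$. By Lemma \ref{lem:stability}(3) there is a quasi-\'etale cover $\eta\colon Z\to Y$ where $Z$ is either an abelian threefold or a product $Z=B\times S$ with $B$ an elliptic curve and $S$ a K3 surface having at most rational double points. In either case $Z$ has (at worst) quotient singularities, and since $e_{\rm top}(B)=0$ and $e_{\rm orb}$ is multiplicative for products, $e_{\rm orb}(Z)=e_{\rm orb}(B)\,e_{\rm orb}(S)=0$ (for an abelian threefold this is just $e_{\rm top}(Z)=0$). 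By the multiplicativity of $e_{\rm orb}$ under quasi-\'etale covers, \cite[Cor.\ p.26]{Bl96}, we then get $0=e_{\rm orb}(Z)=\deg(\eta)\,e_{\rm orb}(Y)$, hence $e_{\rm orb}(Y)=0$.

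Next I would read off $e_{\rm orb}(Y)$ from the local structure of $Y$ along $C$. Since $\varphi$ is an isomorphism over $Y\setminus C$ and $X$ is smooth, $Y$ is smooth away from $C$ and ${\rm Sing}(Y)=C$. A general transverse slice meets $C$ in a point whose $\varphi$-preimage is the fibre $\ell\simeq\mathbb P_1$ of $E\to C$; as $\varphi$ is crepant this fibre is a $(-2)$-curve, so the transverse singularity is an ordinary double point $A_1=\mathbb C^2/\{\pm 1\}$. Here the hypothesis that $E\to C$ is a \emph{$\mathbb P_1$-bundle} is essential: all fibres being smooth, the transverse type is constantly $A_1$ along $C$, so $Y$ has quotient singularities with local group $\mathbb Z/2$ uniformly along $C$ and none elsewhere. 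Stratifying $Y=(Y\setminus C)\sqcup C$ and replacing the contribution of the singular stratum $C$ by $\tfrac12 e_{\rm top}(C)$, the formula of \cite{Bl96} gives
$$ e_{\rm orb}(Y)=e_{\rm top}(Y)-\Bigl(1-\tfrac12\Bigr)e_{\rm top}(C)=e_{\rm top}(Y)-(1-g), $$
so that $e_{\rm orb}(Y)=0$ forces $e_{\rm top}(Y)=1-g$, where $g=g(C)$.

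Finally I would relate $e_{\rm top}(Y)$ to $e_{\rm top}(X)$ using once more that $E\to C$ is a $\mathbb P_1$-bundle. Since $\varphi$ is an isomorphism over $Y\setminus C=X\setminus E$ and $E\to C$ has fibre $\mathbb P_1$, additivity and multiplicativity of the topological Euler characteristic give
$$ e_{\rm top}(X)-e_{\rm top}(Y)=e_{\rm top}(E)-e_{\rm top}(C)=2\,e_{\rm top}(C)-e_{\rm top}(C)=2-2g. $$
Combining with $e_{\rm top}(Y)=1-g$ yields $e_{\rm top}(X)=3(1-g)$. But $X$ is a simply connected Calabi--Yau threefold, so $h^{1,0}(X)=0$ and $e_{\rm top}(X)=2\bigl(h^{1,1}(X)-h^{2,1}(X)\bigr)$ is even; hence $3(1-g)$ is even, i.e.\ $g$ is odd. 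This contradicts the hypothesis that $g=g(C)$ is even (in particular $C\simeq\mathbb P_1$, $g=0$), and proves that $\Omega^{[1]}_Y$ is $H$-stable for every ample $H$.

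The main obstacle is the local analysis underlying the computation of $e_{\rm orb}(Y)$: one must verify that the transverse singularity along $C$ is genuinely $A_1$ with constant local group $\mathbb Z/2$ (this is exactly what fails for a conic bundle with reducible fibres, and is where the $\mathbb P_1$-bundle hypothesis enters twice, once in $e_{\rm orb}(Y)$ and once in $e_{\rm top}(E)=2e_{\rm top}(C)$), and to justify that $Y$ has only quotient singularities so that the multiplicativity $e_{\rm orb}(Z)=\deg(\eta)\,e_{\rm orb}(Y)$ applies to the quasi-\'etale, a priori non-Galois, cover $\eta$; the latter is however already exploited in the proof of Lemma \ref{lem:stability}(3).
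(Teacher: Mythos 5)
Your proof is correct and takes essentially the same route as the paper's: argue by contradiction via Lemma \ref{lem:stability}(3), use the multiplicativity of the orbifold Euler characteristic under the quasi-\'etale cover to get $e_{\rm orb}(Y)=0$, and exploit the local description $Y\simeq \Delta\times (A_1)$ along $C$ coming from the $\mathbb P_1$-bundle hypothesis. The only (harmless) divergence is in the final parity contradiction: the paper deduces that $b_3(Y)$ is odd and compares with $b_3(X)$ via the Leray spectral sequence, whereas you compute $e_{\rm top}(X)=3(1-g)$ by additivity over the contraction and invoke the evenness of $e_{\rm top}$ of a Calabi--Yau threefold, which if anything handles the general even-genus case slightly more directly.
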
 

\begin{proof} We argue as in the previous proof, part (3). Still we have $e_{\rm orb}(C \times S) = 0$, even if $S$ is singular. This is immediate from
the definition of the orbifold Euler number, since $C$ is a elliptic curve. 
Since $E$ is a $\mathbb P_1$-bundle, $Y$ is locally of the form $\Delta \times (A_1)$ with $\Delta$ a small disc in $\mathbb C$. Thus for all $y \in C$, 
the group $G_y$ has order $2$ and by definition of the orbifold Euler number,
$$ e_{\rm orb}(Y) = e_{\rm top}(Y) - (1 - \frac{1}{2} e_{\rm top}(C)).$$
Since $C \simeq \mathbb P_1$ by assumption, we conclude that 
$$ e_{\rm top}(Y) = 1,$$ 
hence $b_3(Y) $ is odd. 
This is impossible: either use the Leray spectral sequence to deduce that $H^3(X,\mathbb C) = H^3(Y,\mathbb C)$. But $b_3(X)$ is even due to Hodge decomposition on $X$. 

\end{proof} 

\begin{remark} \label{rem:stability} If $C$ is an elliptic curve in the setting of Lemma \ref{lem:stability2}, then we conclude that $\chi_{\rm top}(Y) = 0$, hence
$\chi_{\rm top}(X) = 4$.
\end{remark}

\begin{proposition} \label{prop:contract} Let $X$ be a Calabi-Yau manifold, $\varphi: X \to Y$ a primitive divisorial birational map 
contracting the irreducible divisor $E$. Suppose that $\Omega^{[1]}_Y$ is $H$-stable for some ample divisor $H$. 
Then 
$$ \dim H^0(X,\Omega^1_X \otimes T_X) = \dim H^0(X, \sO_X(E) \otimes \Omega^1_X \otimes T_X ) = 1.$$
\end{proposition}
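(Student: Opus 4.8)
The plan is to transfer the simplicity of the $H$-stable reflexive sheaf $\Omega^{[1]}_Y$ up to $X$. The starting point is the canonical identification $\Omega^1_X \otimes T_X = \sEnd(\Omega^1_X)$, so that $H^0(X,\Omega^1_X \otimes T_X)$ is precisely the space of endomorphisms of $\Omega^1_X$; this always contains $\mathbb C\cdot\id$, and the first equality is exactly the assertion that $\Omega^1_X$ (equivalently $T_X$) is simple. First I would fix a section $\psi \in H^0(X,\Omega^1_X \otimes T_X)$ and restrict it to $U := X \setminus E$. Since $\varphi$ is a primitive divisorial contraction, $\varphi|_U$ is an isomorphism onto $Y \setminus Z$, where $Z := \varphi(E)$ has $\codim_Y Z \geq 2$, and $Y \setminus Z$ is smooth (hence contained in the regular locus $Y_{\rm reg}$) because $\varphi$ is an isomorphism over it. Via this isomorphism $\psi|_U$ becomes an endomorphism of $\Omega^1_{Y\setminus Z} = \Omega^{[1]}_Y|_{Y \setminus Z}$, i.e.\ a section of $\sEnd(\Omega^{[1]}_Y) = \sHom(\Omega^{[1]}_Y,\Omega^{[1]}_Y)$ over $Y \setminus Z$.

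The key step is then extension. Because $\Omega^{[1]}_Y$ is reflexive, so is $\sHom(\Omega^{[1]}_Y,\Omega^{[1]}_Y)$, and since $Z$ has codimension at least two in the normal variety $Y$, a section of a reflexive sheaf over $Y \setminus Z$ extends uniquely to a global section. Thus $\psi|_U$ extends to an endomorphism of $\Omega^{[1]}_Y$. By hypothesis $\Omega^{[1]}_Y$ is $H$-stable, hence simple, so $H^0(Y,\sEnd(\Omega^{[1]}_Y)) = \mathbb C\cdot\id$, and the extension equals $c\cdot\id$ for some $c \in \mathbb C$. Restricting back through $\varphi|_U$ gives $\psi|_U = c\cdot\id$, and as $\sEnd(\Omega^1_X)$ is locally free and $U$ is dense in the smooth manifold $X$, this forces $\psi = c\cdot\id$ on all of $X$. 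Therefore $\dim H^0(X,\Omega^1_X \otimes T_X) = 1$.

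For the second equality I would run the same argument with a twist. The inclusion $\sO_X \hookrightarrow \sO_X(E)$ induces $H^0(X,\Omega^1_X \otimes T_X) \hookrightarrow H^0(X,\sO_X(E) \otimes \Omega^1_X \otimes T_X)$, so the latter has dimension at least one. Conversely, given $\sigma$ in the latter space, restriction to $U$ is harmless because $\sO_X(E)|_U \simeq \sO_U$; hence $\sigma|_U$ is an honest endomorphism of $\Omega^1_U$, which by the argument above equals $c\cdot\id$ for some $c$. Then $\sigma - c\cdot\id$ is a section of the locally free sheaf $\sO_X(E) \otimes \Omega^1_X \otimes T_X$ that vanishes on the dense open set $U$, hence vanishes identically, so $\sigma = c\cdot\id$ and the dimension is exactly one.

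The only substantive point, and the step I expect to be the main obstacle, is the extension in the second paragraph: one must verify that $\psi|_U$ genuinely extends to a reflexive endomorphism of $\Omega^{[1]}_Y$ over all of $Y$. This rests on $Y \setminus Z$ being smooth, on $Z$ having codimension at least two (both of which hold for a primitive divisorial contraction, where the contracted image has positive codimension), and on the reflexivity of $\sHom(\Omega^{[1]}_Y,\Omega^{[1]}_Y)$, combined with the standard extension property of reflexive sheaves across closed subsets of codimension $\geq 2$ on a normal variety. Once this is secured, the simplicity of the stable sheaf $\Omega^{[1]}_Y$ supplies both conclusions, and everything else is formal.
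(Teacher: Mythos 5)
Your proof is correct and follows essentially the same route as the paper: identify a section of $\sO_X(E)\otimes\Omega^1_X\otimes T_X$ with an endomorphism of $\Omega^1_X$ defined away from $E$, transport it to $Y\setminus\varphi(E)$, extend over the codimension-two locus using reflexivity of $\sHom(\Omega^{[1]}_Y,\Omega^{[1]}_Y)$, and invoke simplicity of the $H$-stable sheaf $\Omega^{[1]}_Y$ to conclude that the section is a multiple of the identity. The only (harmless) divergence is in the first equality, where the paper simply cites stability of $T_X$ for the Calabi--Yau manifold $X$, whereas you re-run the same descent argument; both are valid, and yours has the mild advantage of using only the stated hypothesis on $\Omega^{[1]}_Y$.
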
 

\begin{proof} The equation
$ \dim H^0(X,\Omega^1_X \otimes T_X)  = 1$ follows from the stability of $T_X.$ 
To obtain the second equation, observe that
any non-zero section of $\Omega^1_X(E) \otimes T_X$ can be seen as a non-zero morphism 
$$ \lambda: \Omega^1_X(-E) \to \Omega^1_X.$$ 
Taking direct images, this gives a morphism
$$ \varphi_*(\lambda):  \varphi_*(\Omega^1_X(-E)) \to \varphi_*(\Omega^1_X). $$
Since $\varphi_*(\Omega^1_X(-E)) \subset \Omega^{[1]}_Y,$ with equality outside $S = \varphi(E)$, the map 
$\varphi_*(\lambda) $ gives a morphism
$$ \mu': (\Omega^{[1]}_Y) _{ \vert Y  \setminus S} \to  (\Omega^{[1]}_Y) _{ \vert Y \setminus S}. $$
Since $S$ has codimension at least $2$ in $Y,$ and since $\Omega^{[1]}_Y$ is reflexive, $\mu'$ extends to a morphism
$$ \mu: \Omega^{[1]}_Y \to  \Omega^{[1]}_Y. $$
By construction, $\varphi_*(\lambda) = \mu \circ \iota$, where 
$$\iota: \varphi_*(\Omega^1_X(-E)) \to \Omega^{[1]}_Y $$
is the inclusion map.
Now $\Omega^{[1]}_Y$ is stable for any ample line bundle on $Y$ by Lemma \ref{lem:stability}. In particular, $\Omega^{[1]}_Y$ is simple, thus 
there exists a complex number $c \ne 0$ such that 
$$ \mu = c  \ {\rm id}.$$ 
It follows that 
$\lambda _{ \vert X \setminus E } = c \ {\rm id}_{\Omega^1_{X \setminus E}},$ 
and therefore $\lambda = c  \ {\rm id}_{\Omega^1_X} \circ \kappa,$
where $\kappa: \Omega^1_X(-E) \to  \Omega^1_X$ is the inclusion map. This proves the assertion. 

\end{proof}  

\begin{lemma} \label{jun28} With $n = \dim X$, suppose in the setting of Proposition \ref{prop:contract} additionally that $H^{n-1}(E,\Omega^1_X \otimes T_X \vert E) \ne 0$.
Then
$$ H^1(X,T_X \otimes \Omega^1_X) \ne 0.$$

\end{lemma}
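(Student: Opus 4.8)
The plan is to dualize via Serre duality and then read off the required nonvanishing from a restriction exact sequence along $E$, where Proposition \ref{prop:contract} supplies exactly the dimension count that makes the argument close.

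First, since $K_X \simeq \sO_X$ and $(\Omega^1_X \otimes T_X)^* \simeq T_X \otimes \Omega^1_X$, Serre duality gives
$$ H^1(X,T_X \otimes \Omega^1_X) \simeq H^{n-1}(X,\Omega^1_X \otimes T_X)^*, $$
so it suffices to prove $H^{n-1}(X,\Omega^1_X \otimes T_X) \ne 0$. To this end I would tensor the structure sequence
$$ 0 \to \sO_X(-E) \to \sO_X \to \sO_E \to 0 $$
by the locally free sheaf $\Omega^1_X \otimes T_X$ and consider the tail of the resulting long exact cohomology sequence,
$$ H^{n-1}(X,\Omega^1_X \otimes T_X) \xrightarrow{r} H^{n-1}(E,\Omega^1_X \otimes T_X \vert E) \xrightarrow{\delta} H^n(X,\sO_X(-E) \otimes \Omega^1_X \otimes T_X) \xrightarrow{s} H^n(X,\Omega^1_X \otimes T_X) \to 0, $$
where the final term vanishes because $\dim E = n-1$, so that $s$ is surjective.

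The crux is to compute the two outer $H^n$-groups. Applying Serre duality once more (using $K_X \simeq \sO_X$ and the self-duality of $\Omega^1_X \otimes T_X$) identifies
$$ H^n(X,\Omega^1_X \otimes T_X)^* \simeq H^0(X,T_X \otimes \Omega^1_X), \qquad H^n(X,\sO_X(-E) \otimes \Omega^1_X \otimes T_X)^* \simeq H^0(X,\sO_X(E) \otimes \Omega^1_X \otimes T_X), $$
and Proposition \ref{prop:contract} asserts that both right-hand spaces are one-dimensional. Hence the source and target of $s$ are each one-dimensional; being surjective, $s$ is then an isomorphism, so $\delta = 0$ and $r$ is surjective. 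Since $H^{n-1}(E,\Omega^1_X \otimes T_X \vert E) \ne 0$ by hypothesis, this forces $H^{n-1}(X,\Omega^1_X \otimes T_X) \ne 0$, and the claim follows from the first Serre-duality identification.

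I expect the only delicate point to be the bookkeeping of the Serre-duality twists: one must check that dualizing $\sO_X(-E) \otimes \Omega^1_X \otimes T_X$ produces precisely the twist $\sO_X(E) \otimes \Omega^1_X \otimes T_X$ occurring in the second equality of Proposition \ref{prop:contract}, and that the triviality of $K_X$ removes every canonical factor. Once these two $H^n$-spaces are matched with the two one-dimensional spaces of Proposition \ref{prop:contract}, the conclusion is automatic, since a surjection between vector spaces of equal finite dimension is an isomorphism and therefore annihilates the connecting map $\delta$, transferring the nonvanishing from $E$ to $X$.
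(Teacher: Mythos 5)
Your proposal is correct and is essentially the paper's own argument: reduce to $H^{n-1}(X,\Omega^1_X\otimes T_X)\ne 0$ via Serre duality and the triviality of $K_X$, restrict along $E$, and use Proposition \ref{prop:contract} to see that the surjection $H^n(X,\sO_X(-E)\otimes\Omega^1_X\otimes T_X)\to H^n(X,\Omega^1_X\otimes T_X)$ is between one-dimensional spaces, hence an isomorphism, killing the connecting map. The paper states this more tersely (it only records that the two $H^n$-dimensions agree), but the mechanism is identical.
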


\begin{proof}  We show equivalently that $H^{n-1}(X,\Omega^1_X \otimes T_X) \ne 0. $
Consider the cohomology sequence
$$ H^{n-1}(X,\Omega^1_X \otimes T_X) \to H^{n-1}(E, \Omega^1_X \otimes T_X \vert E) \to $$
$$ \to H^n(X, \mathcal O_X(-E) \otimes \Omega^1_X \otimes T_X)
\to H^n(X,\Omega^1_X \otimes T_X) \to 0.$$ 
From Proposition \ref{prop:contract} and Serre duality, we  know that 
$$ \dim H^n(X, \mathcal O_X(-E) \otimes \Omega^1_X \otimes T_X)
= \dim  H^n(X,\Omega^1_X \otimes T_X).$$ This yields the claim.

\end{proof}

\begin{theorem} \label{thm:cont1}  Let $X$ be a Calabi-Yau manifold, $\varphi: X \to Y$ a primitive divisorial birational map 
contracting the irreducible 
divisor $E$. If $H^0(E,T_E) \ne 0$, and if $\Omega^{[1]}_Y$ is $H$-stable for some ample divisor $H$, then
$$ H^1(X, \Omega^1_X \otimes T_X) \ne 0.$$
\end{theorem}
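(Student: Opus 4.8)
The plan is to deduce the statement from Proposition \ref{prop:bir0}, applied to the exceptional divisor $E$. Accordingly, I must verify its two hypotheses: first that $H^0(E, T_X\vert E) \ne 0$, and second that
$$ H^0(X, \Omega^1_X \otimes T_X) = H^0(X, \Omega^1_X(\log E) \otimes T_X). $$
The first is immediate, while the second is where the stability assumption enters, via Proposition \ref{prop:contract}.

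For the first hypothesis I would use the normal bundle sequence
$$ 0 \to T_E \to T_X\vert E \to N_{E/X} \to 0, $$
which on global sections yields an inclusion $H^0(E, T_E) \hookrightarrow H^0(E, T_X\vert E)$. Since $H^0(E, T_E) \ne 0$ by assumption, this gives $H^0(E, T_X\vert E) \ne 0$, as needed.

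The second hypothesis is the heart of the argument, and I would establish it by a squeezing argument. Locally writing $E = \{z_1 = 0\}$, one has the chain of inclusions of locally free sheaves
$$ \Omega^1_X \subset \Omega^1_X(\log E) \subset \Omega^1_X(E) = \sO_X(E) \otimes \Omega^1_X, $$
since the local generator $dz_1/z_1$ of $\Omega^1_X(\log E)$ already lies in $\Omega^1_X(E)$. Tensoring with the locally free sheaf $T_X$ and taking global sections, I obtain
$$ H^0(X, \Omega^1_X \otimes T_X) \subseteq H^0(X, \Omega^1_X(\log E) \otimes T_X) \subseteq H^0(X, \sO_X(E) \otimes \Omega^1_X \otimes T_X). $$
By Proposition \ref{prop:contract}, the $H$-stability of $\Omega^{[1]}_Y$ forces both outer terms to be one-dimensional. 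Hence the middle term is one-dimensional as well and all three spaces coincide; in particular the second hypothesis of Proposition \ref{prop:bir0} holds.

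With both hypotheses in hand, Proposition \ref{prop:bir0} yields $H^1(X, \Omega^1_X \otimes T_X) \ne 0$. The only substantial input is Proposition \ref{prop:contract}, which is already established, so I do not expect a serious obstacle; the one point demanding care is the sandwich $\Omega^1_X(\log E) \subset \sO_X(E) \otimes \Omega^1_X$ together with the left-exactness of global sections, which is exactly what propagates the one-dimensionality from both ends to the middle.
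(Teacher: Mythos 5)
Your argument has a genuine gap: it silently assumes that $E$ is smooth, which is not a hypothesis of the theorem. Proposition \ref{prop:bir0} is stated for a \emph{smooth} irreducible divisor $E$, and it has to be: the residue sequence $0 \to \Omega^1_X \otimes T_X \to \Omega^1_X(\log E)\otimes T_X \to T_X\vert E \to 0$ (and indeed the local freeness of $\Omega^1_X(\log E)$ and your local model $E=\{z_1=0\}$ with generator $dz_1/z_1$) requires $E$ to be smooth, or at least normal crossings. But the exceptional divisor of a primitive divisorial contraction of a Calabi--Yau manifold is in general singular and may even be non-normal (see Proposition \ref{prop:contraction}: $E$ is a \emph{possibly singular} del Pezzo surface), and the paper applies Theorem \ref{thm:cont1} precisely in such cases --- e.g.\ in the proof of Theorem \ref{cont2}, case (4) uses it for $E$ an elliptic cone and case (5) for non-normal $E$ with $H^0(E,T_E)\ne 0$. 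The paper itself flags your route, remarking after the proof that ``in case $E$ is smooth, Theorem \ref{thm:cont1} also follows from Corollary \ref{prop:bir1}''; so what you have proved is the smooth case only.

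The paper's actual proof avoids the logarithmic sheaf altogether: by Lemma \ref{jun28} (whose input is exactly Proposition \ref{prop:contract}, i.e.\ the same use of stability you make), it suffices to show $H^{n-1}(E,\Omega^1_X\otimes T_X\vert E)\ne 0$, and this is done by Serre duality on $E$ together with the observation that $T_X\otimes\Omega^1_X\vert E\otimes K_E$ contains $T_E\otimes N_E^*\otimes K_E\simeq T_E$ via the (co)tangent sequence and adjunction --- an argument that makes sense for singular $E$ with $T_E$ the tangent sheaf. Within the smooth case your squeeze $\Omega^1_X\subset\Omega^1_X(\log E)\subset\Omega^1_X(E)$, pinching the middle $H^0$ between the two one-dimensional spaces of Proposition \ref{prop:contract}, is correct and is a clean way to verify hypothesis (2) of Proposition \ref{prop:bir0}; to repair the proof in general you would either need to restrict the statement to smooth $E$ or switch to the duality argument above.
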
 

\begin{proof}  We aim to apply Proposition \ref{jun28} and verify that 
$$ H^{n-1}(E, \Omega^1_X \otimes T_X) \vert E) = H^0(E, T_X \otimes \Omega^1_X \otimes K_E) \ne 0.$$ 
In fact, $T_X \otimes \Omega^1_X \vert E \otimes K_E$ contains - via the (co)tangent sequence and the adjunction formula -  the subsheaf $$T_E \otimes N^*_E \otimes K_E \simeq  T_E,$$
hence the nonvanishing follows from our assumption $H^0(E,T_E) \ne 0$. 

\end{proof} 


In case $E$ is smooth, Theorem \ref{thm:cont1} also follows from Corollary \ref{prop:bir1}.

\begin{remark}  \label{rem:5.7} Suppose that $\dim X = 3$. The condition $H^0(E,T_E) \ne 0$ holds in the following cases $E$ smooth).
\begin{enumerate} 
\item $E$ be a del Pezzo surface and $K_E^2 \geq 6. $ 
\item $E$ is a rational ruled surface.
\item $E$ is a ruled surface over an elliptic curve, and the vector field on the elliptic curve lifts to $E$.
\item $E$ is a ruled surface over $C$, and $E = \mathbb P(V)$ with a rank $2$-vector bundle $V$ such that $h^0(V^* \otimes V) \geq 2$,
since by the relative Euler sequence, the relative vector fields are computed by 
$$ h^0(E,T_{E/C})  = h^0(C,V^* \otimes V) - 1 $$ 

\end{enumerate} 
\end{remark} 

If $H^0(E,T_E) = 0$, things get more involved, we restrict ourselves to dimension three. 
The key is the following

\begin{proposition} \label{prop:small} 
Let $X$ be a Calabi-Yau threefold, $\varphi: X \to Y$ be a birational morphism to a normal compact complex (Moishezon) space $Y$, whose
exceptional locus is a smooth rational curve $C$. 
Then $H^1(X,T_X \otimes \Omega^1_X) \ne 0$.
\end{proposition}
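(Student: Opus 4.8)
The plan is to localise the whole question at the contracted point $p := \varphi(C)$. Since $X$ is Calabi--Yau, $K_X \simeq \mathcal O_X$ and $H^1(X,\mathcal O_X)=H^2(X,\mathcal O_X)=0$, so proving $H^1(X,T_X\otimes\Omega^1_X)\neq 0$ is equivalent to proving $H^1(X,\mathcal F)\neq 0$ for the self-dual bundle $\mathcal F:=\sEnd_0(T_X)$; Serre duality on $X$ gives $h^1(X,\mathcal F)=h^2(X,\mathcal F)$, and Riemann--Roch only yields $\chi(X,\mathcal F)=0$, so a nonzero class must be produced by hand. Because $\varphi$ is small with one-dimensional exceptional fibre, $R^i\varphi_*\mathcal F=0$ for $i\ge 2$; and $Y$ has canonical (hence rational) Gorenstein singularities with $\omega_Y\simeq\mathcal O_Y$ (as $\varphi$ is crepant, $K_X=\varphi^*K_Y$), so $R^i\varphi_*\mathcal O_X=0$ for $i>0$. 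The Leray five-term sequence for $\varphi$ therefore reads
\[
0\to H^1(Y,\varphi_*\mathcal F)\to H^1(X,\mathcal F)\xrightarrow{\ \alpha\ }(R^1\varphi_*\mathcal F)_p\xrightarrow{\ d_2\ }H^2(Y,\varphi_*\mathcal F)\to H^2(X,\mathcal F)\to 0,
\]
and it suffices to prove two things: that $(R^1\varphi_*\mathcal F)_p\neq 0$, and that $d_2=0$.

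First I would compute the stalk $(R^1\varphi_*\mathcal F)_p$ by formal functions, as $\varprojlim_n H^1(C_n,\mathcal F\otimes\mathcal O_{C_n})$, where $C_n$ is the $n$-th infinitesimal neighbourhood of $C$. On $C\simeq\mathbb P_1$ every bundle splits; $T_X|_C$ has total degree $\deg T_C+\deg N_{C/X}=2-2=0$, and since $T_C=\mathcal O(2)$ injects into it, one summand has degree $\ge 2$ and hence another has degree $\le -1$. Thus $\sEnd(T_X)|_C$ contains a line bundle of degree $\le -3$, whence $H^1(C,\sEnd(T_X)|_C)\neq 0$; as $H^1(C,\mathcal O_C)=0$, also $H^1(C,\mathcal F|_C)\neq 0$. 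The graded pieces of the inverse system are the curve sheaves $S^nN^*_{C/X}\otimes\mathcal F|_C$, whose $H^2$ vanishes, so all transition maps $H^1(C_{n+1},\cdot)\to H^1(C_n,\cdot)$ are surjective; the inverse limit therefore surjects onto $H^1(C,\mathcal F|_C)\neq 0$, and $(R^1\varphi_*\mathcal F)_p\neq 0$.

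The vanishing of $d_2$ is the heart of the matter, and I would obtain it by a duality-plus-counting argument. Serre duality on the compact Cohen--Macaulay space $Y$ (with $\omega_Y\simeq\mathcal O_Y$) gives $H^2(Y,\varphi_*\mathcal F)^\vee\cong\Ext^1_Y(\varphi_*\mathcal F,\mathcal O_Y)$; since $\varphi_*\mathcal F$ is reflexive and locally free away from $p$, the sheaves $\sExt^1_Y(\varphi_*\mathcal F,\mathcal O_Y)$ are supported at $p$, and the local-to-global Ext spectral sequence furnishes a differential $d_2^{\mathrm{Ext}}\colon \sExt^1_Y(\varphi_*\mathcal F,\mathcal O_Y)_p\to H^2(Y,\varphi_*\mathcal F)$. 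Relative Grothendieck duality for the crepant morphism $\varphi$ (using $\omega_\varphi\simeq\mathcal O_X$ and the self-duality of $\mathcal F$) identifies $R^1\varphi_*\mathcal F\cong\sExt^1_Y(\varphi_*\mathcal F,\mathcal O_Y)$ and $d_2$ with the transpose of $d_2^{\mathrm{Ext}}$. Now a dimension count pins down the rank: the Leray sequence together with $h^1(X,\mathcal F)=h^2(X,\mathcal F)$ gives $h^2(Y,\varphi_*\mathcal F)=h^1(Y,\varphi_*\mathcal F)+\dim(R^1\varphi_*\mathcal F)_p$, while Serre duality on $Y$ and the Ext spectral sequence give $h^2(Y,\varphi_*\mathcal F)=h^1(Y,\varphi_*\mathcal F)+\dim\sExt^1_Y(\varphi_*\mathcal F,\mathcal O_Y)_p-\rk d_2^{\mathrm{Ext}}$. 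Since the two local dimensions coincide, $\rk d_2^{\mathrm{Ext}}=0$, hence $d_2=0$; by the five-term sequence $\alpha$ is then surjective and $H^1(X,\mathcal F)\twoheadrightarrow(R^1\varphi_*\mathcal F)_p\neq 0$.

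The step I expect to be most delicate is the duality bookkeeping in the previous paragraph: one must check that Grothendieck--Serre duality for the (singular, possibly only Moishezon) crepant contraction $\varphi$ really does intertwine the Leray and local-to-global Ext spectral sequences so that $d_2$ is exactly the transpose of $d_2^{\mathrm{Ext}}$, and that Serre duality is available on $Y$ in the generality allowed. Should this compatibility prove awkward to set up cleanly, a concrete fallback is to verify $d_2=0$ directly in each of the three possible normal-bundle types of a flopping curve, namely $N_{C/X}\in\{\mathcal O(-1)^{\oplus2},\ \mathcal O\oplus\mathcal O(-2),\ \mathcal O(1)\oplus\mathcal O(-3)\}$, using the explicit local analytic model of the contraction near $C$ to compute $H^2(Y,\varphi_*\mathcal F)$ and the map out of the local term.
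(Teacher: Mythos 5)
Your first step --- producing a nonzero class in the stalk $(R^1\varphi_*\sEnd_0(T_X))_p$ via the splitting type of $T_X\vert C$ and the theorem on formal functions --- is sound and runs parallel to the paper's Claim that $h^0(Y,R^1\varphi_*(T_X\otimes\Omega^1_X))\geq 5$ (the paper needs the sharper numerical bound, computed via Laufer's classification of $N_{C/X}$, whereas you only need nonvanishing). The second step, however, contains a fatal error. The isomorphism $R^1\varphi_*\sF\cong\sExt^1_Y(\varphi_*\sF,\sO_Y)$ that you extract from relative duality is false. What relative duality actually gives (using $\omega_\varphi\simeq\sO_X$ and $\sF^\vee\simeq\sF$) is an isomorphism of complexes $R\varphi_*\sF\cong R\sHom_Y(R\varphi_*\sF,\sO_Y)$; unpacking the hyperext spectral sequence for the right-hand side (whose only terms are $\sExt^p(\varphi_*\sF,\sO_Y)$ and $\sExt^3(R^1\varphi_*\sF,\sO_Y)$, the latter being the Matlis dual of the finite-length module $R^1\varphi_*\sF$) and comparing cohomology sheaves in degrees $1$ and $2$ shows that $\sExt^1_Y(\varphi_*\sF,\sO_Y)$ is an extension of $(R^1\varphi_*\sF)^\vee$ by $R^1\varphi_*\sF$, hence has \emph{twice} the length of $R^1\varphi_*\sF$. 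Feeding the correct length into your own dimension count, the two expressions for $h^2(Y,\varphi_*\sF)$ give $\mathrm{rk}\, d_2^{\mathrm{Ext}}=\mathrm{length}(R^1\varphi_*\sF)$ rather than $0$, so the argument establishes nothing about $d_2$ and the conclusion $H^1(X,\sF)\twoheadrightarrow (R^1\varphi_*\sF)_p$ does not follow.

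This failure is structural, not just a bookkeeping slip: your argument uses no information about $C$ or the singularity $\varphi(C)$ beyond formal duality, and such a soft argument cannot decide whether $d_2$ vanishes. The paper's proof makes essential quantitative use of the geometry: it argues by contradiction, showing on one hand that $h^2(Y,\varphi_*(T_X\otimes\Omega^1_X))\geq h^0(R^1\varphi_*(T_X\otimes\Omega^1_X))\geq 5$ (via Laufer's three normal-bundle types and, in the $(-1,-1)$ case, the second infinitesimal neighbourhood), and on the other hand that $2h^2(Y,\varphi_*(T_X\otimes\Omega^1_X))=h^0(\sExt^1)\leq 8$, where the upper bound comes from an explicit resolution of $\Omega^1_Y$ using Laufer's theorem that $\varphi(C)$ is a \emph{hypersurface} singularity. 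It is precisely these two numerical inputs, absent from your proposal, that make the argument close. Your suggested fallback --- a case-by-case analysis over the three normal-bundle types using the local model of the contraction --- is much closer to what is actually required.
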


\begin{proof} We argue by contradiction and assume to the contrary that $$H^1(X,T_X \otimes \Omega^1_X) = 0.$$
By a theorem of Laufer \cite[Thm.4.1]{La81}, the normal bundle $N_C = N_{C/X} $ has the following form 
\begin{enumerate}
\item $ N_C = \sO_C(-1) \oplus \sO_C(-1)$;
\item $N_C = \sO_C \oplus \sO_C(-2)$;
\item $N_C = \sO_C(1) \oplus \sO_C(-3).$
\end{enumerate}
Moreover, $y_0 = \varphi(C)$ is a hypersurface singularity. \\
We claim that 
\begin{equation} \label{CLAIM}  h^0(Y,R^1\varphi_*(T_X \otimes \Omega^1_X)) \geq 5. \end{equation} 
To prove Claim (\ref{CLAIM}), we use the inequality
$$ h^0(Y,R^1\varphi_*(T_X \otimes \Omega^1_X)) \geq h^1(C,T_X \otimes \Omega^1_X \vert C).$$
In fact, all cohomology classes in $H^1(C,T_X \otimes \Omega^1_X \vert C)$ extend to all infinitesimal neighborhoods, 
since $H^2(C, (N^*_C)^{\otimes k} \otimes T_X \otimes \Omega^1_X \vert C) = 0$ for all $k$. 
Since 
$$  h^1(C,T_X \otimes \Omega^1_X \vert C) = 5 $$ in Case (2) and
$$  h^1(C,T_X \otimes \Omega^1_X \vert C) = 7 $$ in Case (3), we need only to consider
Case (1). In this case, 
$$  h^1(C,T_X \otimes \Omega^1_X \vert C) = 4. $$
Here we need to consider the second infinitesimal neighborhood $C_2$, defined by the ideal $\sI_C^2$,
and use the inequality 
$$ h^0(Y,R^1\varphi_*(T_X \otimes \Omega^1_X)) \geq h^1(C_2,T_X \otimes \Omega^1_X \vert C_2).$$
The right hand side appears in the cohomology sequence
$$ 0 \to H^0(C,N^*_C \otimes T_X \otimes \Omega^1_X \vert C) \to H^0(C_2,T_X \otimes \Omega^1_X \vert C_2) \buildrel {\alpha} \over {\to}  H^0(C,T_X \otimes \Omega^1_X)  \to $$
$$ \to H^1(N_C^* \otimes T_X \otimes \Omega^1_X) \to H^1(C_2,T_X \otimes \Omega^1_X) \vert C_2).$$
By \cite[Thm.3.2]{La81}, a sufficiently small neighborhood of $C \subset X$ is biholomorphic to a small neighborhood of the zero section of the normal bundle $N_C$, hence 
$\alpha$ is surjective. Since 
$$h^1(N_C^* \otimes T_X \otimes \Omega^1_X) = 4,$$
Claim (\ref{CLAIM}) also holds in Case (1).
\vskip .2cm 
Since $\varphi$ is small, the sheaf $\varphi_*(T_X \otimes \Omega^1_X)$ is reflexive, hence 
$$ \varphi_*(T_X \otimes \Omega^1_X) = (T_Y \otimes \Omega^{[1]}_Y)^{**} =: \sF.$$ 
Since we assume $H^1(X,T_X \otimes \Omega^1_X) = 0$, the Leray spectral sequence yields
$$ H^1(Y,\sF) = 0;$$
further, the edge morphism 
$$ \mu: E_2^{0,1} \to E_2^{2,0} $$
is injective, hence by Claim (\ref{CLAIM}),
\begin{equation} \label{eq:comp}  h^2(Y,\sF) \geq h^0(Y,R^1\varphi_*(T_X \otimes \Omega^1_X)) \geq 5. \end{equation} 
We now compute $H^2(Y,\sF)$ in a different way to obtain a contradiction. By Serre duality,
$$ H^2(Y,\sF) \simeq {\rm Ext}^1(\sF,\sO_Y).$$ 
We will use the Grothendieck spectral sequence,  with $E_2$-terms
$$ E_2^{p,q} = H^p(Y,\sExt^q(\sF,\sO_Y)), $$
converging to ${\rm Ext}^{p+q}(\sF,\sO_Y)$. 
Notice that $$ H^p(Y, \sExt^0(\sF,\sO_Y)) = H^p(Y,\sHom(\sF,\sO_Y)) = H^p(Y,\sF),$$
since $\sF \simeq \sF^*$. 
Thus, introducing the edge morphism $\delta: E_2^{0,1} \to E_2^{2,0} $, the spectral sequence together with the vanishing $E_2^{1,0} = 0$ yields
$$ H^2(Y,\sF) \simeq \ker \delta.$$
Since ${\rm Ext}^2(\sF,\sO_Y) = H^1(Y,\sF) = 0$, necessarily $E_3^{2,0} = 0$, i.e., $E_2^{2,0} = \im \  \delta$,  so 
the morphism $\delta$ is surjective. Since $E_2^{2,0} = H^2(Y,\sF) \simeq \ker \delta$, we obtain 
\begin{equation} \label{eq:cont2}  2h^2(Y,\sF) = \dim E_2^{0,1} =  h^0(Y,\sExt^1(\sF,\sO_Y)).\end{equation} 
The sheaf  $\sExt^1(\sF,\sO_Y)$ being supported on $y_0$, we need to compute its length at $y_0$. 
Recalling that $y_0$ is a hypersurface singularity, 
$\Omega^1_Y = \Omega^{[1]}_Y$ by a theorem of Kunz \cite[Cor. 9.8]{Kun86}, hence
$$ \sF = \sHom(\Omega^1_Y,\Omega^1_Y).$$ 
For our local computation, we may assume $Y$ itself to be a hypersurface in $\mathbb C^4$. 
We consider the cotangent sequence
$$ 0 \to N^*_{Y /\mathbb C^4} \to \Omega^1_{\mathbb C^4} \vert Y \to \Omega^1_Y \to 0 ,$$
which after possibly shrinking $Y$ reads
$$ 0 \to \sO_Y \to \sO_Y^{\oplus 4} \to \Omega^1_Y \to 0. $$ 
Tensoring by $T_Y$ gives
\begin{equation} \label{eq:HR}  0 \to T_Y \to T_Y^{\oplus 4} \to T_Y \otimes \Omega^1_Y \to 0. \end{equation} 
The sheaf $T_Y \otimes \Omega^1_Y$ is clearly torsion free, as seen directly from the exact sequence (\ref{eq:HR}),  but possibly not reflexive. 
To see the difference, introduce the quotient $\mathcal R = \sF / (T_Y \otimes \Omega^1_Y)$ which is supported on $y_0$.  Dualizing the 
resulting exact sequence
$$ 0 \to T_Y \otimes \Omega^1_X \to \mathcal F \to \mathcal R \to 0 $$
 gives 
$$ \sExt^1(\mathcal R,\sO_Y ) \to \sExt^1(\sF,\sO_Y) \to \sExt^1(T_Y \otimes \Omega^1_Y, \sO_Y).$$
Since $\mathcal R $ is supported on $y_0$, we have  $\sExt^1(\mathcal R,\sO_Y ) = 0$, and thus it suffices to estimate $h^0(Y, \sExt^1(T_Y \otimes \Omega^1_Y,\sO_Y)$. 
Dualizing the exact sequence (\ref{eq:HR}) yields the exact sequence
$$ (\Omega^1_Y)^{\oplus 4} \buildrel {\alpha} \over {\to} \Omega^1_Y \to \sExt^1(T_Y \otimes \Omega^1_Y, \sO_Y) \to \sExt^1(\Omega^1_Y,\sO_Y)^{\oplus 4}.$$ 
Since $\alpha$ is simply the map $$ \id \otimes (df)^*: \Omega^1_Y \otimes \sO_Y^{\oplus 4} \to \Omega^1_Y \otimes \sO_Y = \Omega^1_Y,$$
where $f$ is the equation for $Y \subset  \mathbb C^4$, it follows
$$ {\rm coker} \alpha = \sO_{y_0}^{\oplus 4}.$$ 
Dualizing the cotangent sequence and using the same argument gives also
$$ \sExt^1(\Omega^1_Y,\sO_Y) = \sO_{y_0}.$$ 
Hence in total,
$$ h^0(Y,\sExt^1(T_Y \otimes \Omega^1_Y,\sO_Y)) \leq 8,$$
and consequently by Equation (\ref{eq:cont2}), 
$$ h^2(Y,\sF) =  \frac{1}{2} h^0(Y,\sExt^1(\sF,\sO_Y)) \leq 4.$$
This contradicts Inequality (\ref{eq:comp}), completing the proof of Proposition \ref{prop:small}. 

\end{proof}

\begin{corollary} \label{cor:small} Let $X$ be a Calabi-Yau threefold and $C \subset X$ be a smooth rational curve with normal bundle $N_C$. Assume either that 
$N_C = \sO_C(-1) \oplus \sO_C(-1)$ or that $N_C = \sO_C \oplus \sO_C(-2)$ and that $C$ is an isolated curve in the sense of \cite{Re83}, i.e., $C$ does not move in $X$. 
Then $H^1(X,T_X \otimes \Omega^1_X) \ne 0$.
\end{corollary}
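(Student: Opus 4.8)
The plan is to deduce the statement from Proposition~\ref{prop:small} by producing, in each of the two cases, a birational morphism $\varphi\colon X \to Y$ onto a normal compact complex space whose exceptional locus is exactly the smooth rational curve $C$. Once such a $\varphi$ is at hand, Proposition~\ref{prop:small} yields $H^1(X, T_X \otimes \Omega^1_X) \ne 0$ directly; note that $Y$ is automatically Moishezon since $X$ is projective, so the hypotheses of Proposition~\ref{prop:small} are met even though $\varphi$ is only produced analytically. Thus the entire content of the corollary is the existence of the contraction of $C$, and the cohomological conclusion is then free.

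In the first case $N_C = \sO_C(-1) \oplus \sO_C(-1)$, the conormal bundle $N_C^* \simeq \sO_C(1)^{\oplus 2}$ is ample on $C \simeq \mathbb P_1$. Hence $C$ is exceptional in the sense of Grauert and can be blown down to a point, the image $y_0 \in Y$ being an ordinary double point. Here no further hypothesis is needed: a $(-1,-1)$-curve satisfies $H^0(C, N_C) = 0$ and is therefore automatically rigid, which is why this case requires no isolatedness assumption. Proposition~\ref{prop:small} then applies.

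In the second case $N_C = \sO_C \oplus \sO_C(-2)$ the conormal bundle $N_C^* \simeq \sO_C \oplus \sO_C(2)$ is only nef, not ample, so Grauert's criterion fails and this is where the real work lies. Now $H^0(C, N_C) \simeq \mathbb C$, so $C$ carries a single first-order deformation coming from the $\sO_C$-summand; the assumption that $C$ is isolated (does not move, in the sense of \cite{Re83}) says precisely that this first-order deformation is obstructed and that $C$ is rigid to all orders in $X$. For such a rigid $(0,-2)$-curve the contractibility of $C$ to a hypersurface (compound du Val) point is provided by the results of Laufer \cite{La81} and Reid \cite{Re83}, giving the desired small contraction $\varphi\colon X \to Y$ with exceptional locus $C$; Proposition~\ref{prop:small} again finishes the argument. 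The main obstacle is exactly this second case: without the isolatedness hypothesis the curve could sweep out a surface, forcing a divisorial rather than a small contraction, so one must use that hypothesis both to rule out the curve moving and to guarantee that the formal neighborhood of $C$ is contractible.
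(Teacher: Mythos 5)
Your proof is correct and follows essentially the same route as the paper: reduce to Proposition~\ref{prop:small} by contracting $C$, using Grauert's criterion in the $(-1,-1)$ case and Reid's contractibility theorem for isolated $(0,-2)$-curves in the second case. The extra commentary on Moishezonness of $Y$ and on why the isolatedness hypothesis is only needed in the second case is accurate but not a departure from the paper's argument.
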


\begin{proof} By Proposition \ref{prop:small}, it suffices to prove that $C$ is contractible. In the first case this is Grauert's criterion \cite{Gra62}; 
in the second case we apply a theorem of Reid \cite[Cor. 5.6]{Re83}.
\end{proof} 

We now apply Corollary \ref{cor:small} to compute $H^1(X,T_X \otimes \Omega^1_X)$.

\begin{theorem} \label{cont2}  Let $\varphi: X \to Y$ be a primitive contraction of the Calabi-Yau threefold with exceptional divisor $E$. Assume that $\dim \varphi(E) = 0$ and that one of the following conditions
holds.
\begin{enumerate} 
\item $E$ is smooth;
\item $K_E^2 \geq 6$;
\item $E$ is normal, rational and contains a smooth contractible rational curve, e.g., $E$ carries a birational contraction of an extremal ray;
\item $E$ is normal and irrational;
\item $E$ is non-normal.
\end{enumerate}
Then 
$$ H^1(X, \Omega^1_X \otimes T_X) \ne 0.$$
\end{theorem}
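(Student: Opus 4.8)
The plan is to combine the stability input of Lemma \ref{lem:stability} with the two nonvanishing mechanisms already at our disposal: Theorem \ref{thm:cont1}, which feeds on a vector field on $E$, and Corollary \ref{cor:small}, which feeds on a contractible smooth rational curve. First I would dispose of the stability hypothesis once and for all. Since $\dim\varphi(E) = 0$, the contrapositive of Lemma \ref{lem:stability}(3) shows that $\Omega^{[1]}_Y$ is $H$-stable for every ample divisor $H$, because non-stability would force $\dim\varphi(E) = 1$. Thus the hypothesis of Theorem \ref{thm:cont1} (and of Proposition \ref{prop:contract}) is automatic throughout. I would also record the two adjunction facts on which everything rests: by Proposition \ref{prop:contraction}(2) the anticanonical sheaf $-K_E$ is ample, and since $K_X \simeq \sO_X$ adjunction gives $N_{E/X} \simeq \sO_E(E) \simeq K_E$, so that $N_E^* \otimes K_E \simeq \sO_E$. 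This is exactly what makes $T_X \otimes \Omega^1_X\vert E \otimes K_E$ contain $T_E$ in the proof of Theorem \ref{thm:cont1}, also in the singular case once one works with the reflexive tangent sheaf.

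The core of the argument is a dichotomy. \textbf{Mechanism A}: if $H^0(E, T_E) \neq 0$, then Theorem \ref{thm:cont1} gives the conclusion immediately. \textbf{Mechanism B}: if $H^0(E, T_E) = 0$, I would exhibit a smooth rational curve $C \subset E$ that is contractible in $X$ and apply Corollary \ref{cor:small} (equivalently Proposition \ref{prop:small}). The decisive computation is that every smooth rational curve $C$ in the Calabi--Yau threefold $X$ has $\deg N_{C/X} = 2g(C) - 2 = -2$; moreover, for $C \subset E$ the normal bundle sequence
$$ 0 \to N_{C/E} \to N_{C/X} \to K_E\vert C \to 0 $$
combined with the surface adjunction $C^2 + K_E \cdot C = -2$ shows that a $(-1)$-curve on $E$ (i.e. $C^2 = -1$) satisfies $N_{C/X} \simeq \sO_C(-1) \oplus \sO_C(-1)$, whence $C$ is contractible by Corollary \ref{cor:small}.

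Then I would run the case analysis. For Cases (1) and (2), $E$ is a (possibly singular) del Pezzo surface: if $K_E^2 \geq 6$ then $E$ carries a vector field (Remark \ref{rem:5.7}(1) in the smooth case, and the analogous toric/ruled structure in the singular one), so Mechanism A applies; a smooth del Pezzo with $K_E^2 \leq 5$ is a blow-up of $\mathbb P_2$ at at least four points and hence contains a $(-1)$-curve, so Mechanism B applies. For Case (3) the given extremal contraction of $E$ supplies a smooth contractible rational curve $C$ with $C^2 < 0$; I would pin down its normal bundle in $X$ via the displayed sequence and conclude by Mechanism B. For Cases (4) and (5) --- irrational normal, respectively non-normal, anticanonically polarized surfaces --- I would argue that such $E$ still carry a nonzero (reflexive) vector field: an irrational normal del Pezzo surface is essentially a generalized cone over an elliptic curve and hence has a $\mathbb C^*$-action, and the non-normal del Pezzo surfaces arising here have similarly special structure, so Mechanism A applies; failing that, one produces a contractible rational curve for Mechanism B.

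The hard part will be Cases (4) and (5). There one cannot invoke the clean smooth del Pezzo dictionary, and the argument must rest on the fine classification of irrational normal and of non-normal surfaces with ample anticanonical class, in order to guarantee either a nonzero vector field or a smooth contractible rational curve with the correct normal bundle; in the latter situation one must also verify the isolatedness hypothesis of Corollary \ref{cor:small} when the splitting type is $\sO_C \oplus \sO_C(-2)$ rather than $\sO_C(-1)^{\oplus 2}$. A secondary technical point is to make the proof of Theorem \ref{thm:cont1} fully rigorous for singular $E$ by consistently replacing $T_E$ and the (co)tangent sequence by their reflexive analogues, so that the containment of $T_E$ established above remains valid.
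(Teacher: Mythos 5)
Your two-mechanism dichotomy (a vector field on $E$ feeding Theorem \ref{thm:cont1}, versus a contractible smooth rational curve feeding Corollary \ref{cor:small}) is exactly the architecture of the paper's proof, and your treatment of the smooth case (vector field if $K_E^2\geq 6$, a $(-1)$-curve with $N_{C/X}\simeq\sO_C(-1)\oplus\sO_C(-1)$ otherwise), of case (3), and of case (4) (elliptic cone, hence a vector field) coincides with what the paper does. Your observation that stability of $\Omega^{[1]}_Y$ is automatic once $\dim\varphi(E)=0$, via Lemma \ref{lem:stability}(3), is also how the paper uses that lemma.

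The gap is in cases (2) and (5), which is precisely where the paper has to work. For singular (possibly non-normal) $E$ with $K_E^2\geq 6$ you assert a vector field coming from ``the analogous toric/ruled structure''; the paper instead invokes the Gross--Wilson local deformation of a neighborhood $U\supset E$ to one whose exceptional del Pezzo $E_t$ is smooth, uses $K_{U_t}\simeq\sO_{U_t}$ and $H^0(E_t,T_{E_t})\neq 0$ (valid since $K_{E_t}^2=K_E^2\geq 6$) to get $H^0(E_t,T_{U_t}\otimes\Omega^1_{U_t}\vert E_t\otimes K_{E_t})\neq 0$, and concludes by semicontinuity, Serre duality and Lemma \ref{jun28}. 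Your direct route is plausible for normal $E$ (the minimal resolution is a weak del Pezzo of degree $\geq 6$, a blow-up of $\mathbb P_2$ in at most three possibly infinitely near points, with positive-dimensional automorphism group, and vector fields descend through the ADE resolution), but this must actually be proved, and it is far from obvious for the non-normal surface with $K_E^2=7$ that occurs when $\varphi(E)$ is not a hypersurface singularity. Likewise, in case (5) with $1\leq K_E^2\leq 3$ your phrase ``similarly special structure'' is the entire content of the proof: the paper goes through Reid's classification of non-normal del Pezzo surfaces and verifies $H^0(E,T_E)\neq 0$ case by case, e.g. for the non-normal cubic by a dimension count on $H^0(E,\sI_N\otimes N_{E/\mathbb P_3})$, and for $K_E^2=1$ by descending a vector field from the normalization $\mathbb P_2$ that is tangent to the conic lying over the non-normal locus. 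Without these verifications Mechanism A is unavailable, and Mechanism B is not obviously a fallback, since a non-normal del Pezzo need not visibly contain a contractible smooth rational curve of the right type (and in the $\sO_C\oplus\sO_C(-2)$ case you would still owe the isolatedness hypothesis of Corollary \ref{cor:small}). So the proposal is a correct skeleton in which the decisive nonvanishing $H^0(E,T_E)\neq 0$ is left unproved exactly where it is hardest.
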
 

\begin{proof} 
(1)  If $E$ is smooth and if $K_E^2 \geq 6$, the claim follows from Theorem \ref{thm:cont1}, combined with Remark \ref{rem:5.7}, since $E$ is a del Pezzo surface. 
If $E$ is smooth with $K_E^2 \leq 7$, then we may choose a $(-1)-$curve $C \subset E$. Then, using the normal bundle sequence for $C \subset E \subset X$, it is immediate that 
$C$ has normal bundle $N_{C/X} = \sO_C(-1) \oplus \sO_C(-1) $. Hence Corollary \ref{cor:small} applies. 
\vskip .2cm \noindent 
(2) 
By a theorem of Gross, \cite[proof of 5.8]{Gr97} and Wilson, \cite[p.620-624]{Wi97},
there exists an open neighborhood $ U = U_0 \subset X$ of $E_0 := E$ and a deformation 
$$ \pi: \mathcal U \to \Delta $$
over the unit disc
and a divisor $\mathcal E \subset \mathcal U$ 
such that $X_0 = \pi^{-1}(0)$, such that $\mathcal E \cap X_0 = E_0$ and such that - after possibly shrinkling $\Delta$ - 
the divisor $E_t = \mathcal E \cap X_t$ is smooth. Since the normal bundle $N_{E/X}$ is negative, so does $N_{E_t/U_t},$ hence $E_t$ is contractible. 
 Thus, we obtain a family $\phi_t: U_t \to V_t$ contracting fiberwise the divisor $E_t.$ Notice that 
\begin{equation} \label{eq:triv} K_{U_t} \simeq \sO_{U_t} \end{equation}  
for all $t.$ 
In fact, we may choose $U_t$ such that $E_t$ is a deformation retract of $U_t.$ Hence the restriction
$$  H^2(U_t,\mathbb Z) \to H^2(E_t, \mathbb Z)$$
is an isomorphism. Since $H^q(E_t,\mathcal O_{E_t}) = 0 $, the restriction 
$$ {\rm Pic}(U_t) \to {\rm Pic}(E_t)$$
is an isomorphism, too. Since $K_X \vert E \simeq \mathcal O_E,$ it follows that $K_{U_t} \vert E_t \equiv 0,$ hence $K_{U_t} \vert E_t \simeq \mathcal O_{E_t}.$ 
Hence Equation (\ref{eq:triv}) follows. 

\vskip .2cm We assume now that $K_E^2 \geq 6$. 
Using the inclusion $$ N^*_{E_t/U_t} \subset \Omega^1_{U_t},$$ 
and observing $N^*_{E_t/U_t} \simeq \sO_{E_t}(-K_{E_t})$, we have an inclusion
$$  H^0(E_t, T_{E_t}) = H^0(E_t, T_{E_t} \otimes N^*_{E_t/U_t} \otimes \sO_{E_t}(K_{E_t}))\to $$
$$ \to  H^0(E_t, T_{U_t} \otimes \Omega^1_{U_t} \vert E_t \otimes {\sO}_{E_t}({K_E}_t)).$$
Since $K_{E_t}^2 = K_E^2 \geq 6$, we conclude 
Observe that, using the conormal sheaf sequence and the triviality of $K_{U_t}$ that 
$$ H^0(E_t, T_{U_t} \otimes \Omega^1_{U_t} \vert E_t \otimes {\sO}_{E_t}(K_{E_t})) = \ne 0.$$
By semi-continuity, 
$$ H^0(E, T_U \otimes \Omega^1_U \vert E \otimes \sO_E(K_E)) \ne 0.$$
By Serre duality,
$$ H^2(E, T_X \otimes \Omega^1_X \vert E) \ne 0,$$
and we conclude by Lemma \ref{jun28}. 

\vskip .2cm \noindent (3) Suppose now that $K_E^2 \leq 5$ and $E$ is normal and rational (but singular).
Then by \cite{HW81}, $E$ is either a rational surface with only ADE singularities or an elliptic cone; hence in our case, the first alternative holds. Let $C \subset E$ be a smooth contractible rational curve; $C$ is a 
$\mathbb Q$-divisor in $E$, but possibly not Cartier. 
The conormal sheaf $N^*_{C/E}$ is of the form 
$$ N^*_{C/E} = \sO_C(a) \oplus \sT$$
with $a \geq 0$ and $\sT$ a torsion sheaf, supported on $C \cap {\rm Sing}(E)$. 
Consider the conormal sheaf sequence
$$ 0 \to N^*_{E/X} \vert C \to N^*_{C/X} \to N^*_{C/E} \to 0.$$ 
Since  $N^*_{E/X} \vert C = -K_E \vert C$ is ample, either $a = 1$ and $N^*_{C/X} = \sO_C(1) \oplus \sO_C(1)$, or $a = 0 $, $\sT$ is supported on one point with one-dimensional stalk and 
 $N^*_{C/X} = \sO_C(2) \oplus \sO_C$. In both cases we conclude by Corollary \ref{cor:small}.

\vskip .2cm \noindent  (4) If $E$ is normal with a non-rational singularity, then, as already mentioned, $E$ is an elliptic cone. In this case, $H^0(E,T_E) \ne 0$, and we conclude by Theorem \ref{thm:cont1}. 

\vskip .2cm \noindent (5) Suppose finally that $E$ is non-normal. 
In case $y_0$ is not a hypersurface singularity, $K_E^2 = 7$ by \cite[Thm. 5.2]{Gr97}. This case is settled by (2). Thus we may assume that $y_0$ is a hypersurface singularity. 
Then  
$$ 1 \leq K_E^2 \leq 3$$
by \cite[p.201]{Gr97}, see also \cite[p.620]{Wi97}. Non-normal del Pezzo surfaces being classified by Reid 
 \cite{Rei94}, we show case by case that $H^0(E,T_E) \ne 0$. 
Then we conclude by Theorem \ref{thm:cont1}.
Actually, we have more informations on $E$, \cite[p.201]{Gr97} and \cite[p.620]{Wi97}. 
In fact, if $K_E^2 = 1$, then $E $ is a hypersurface in the weighted projective space $\mathbb P(3,1,1,1)$ with explicit equation, 
in case $K_E^2 $ the surface $E$ is a hypersurface in $\mathbb P(2,1,1,1)$ and if $K_E^2 = 3$, then $E$ is a cubic. The normalization $\widetilde E$  in the last
two cases are Hirzebruch surfaces, while the normalization in the first case is $\mathbb P_2$. 

We give two examples. First assume that $K_E^2 = 3$; so $E$ is a cubic surface in $\mathbb P_3$. The nonnormal locus $N$ (with the structure given by the conductor ideal) is a line in $E$. 
Then the tangent sheaf sequence reads
$$ 0 \to T_E \to T_{\mathbb P_3} \vert E \to \sI_N \otimes N_{E/\mathbb P_3} \to 0$$
(actually it suffices that the image of $  T_E \to T_{\mathbb P_3} \vert E$ is contained in $ \sI_N \otimes N_{E/\mathbb P_3} $). 
Now $h^0(E,T_{\mathbb P_3} \vert E) = 15$ and $h^0(E,N_{E/\mathbb P_3}) = h^0(E, \sO_E(3)) = 19$. 
Since the image of the restriction map $H^0(E,\sO_E(3)) \to H^0(N,\sO_N(3)) $ is surjective, it has dimension $5$, hence 
$h^0(E,\sI_N \otimes  N_{E/\mathbb P_3}) = 14$, hence $h^0(E,T_E) \ne 0$. 

Second, consider the case $K_E^2 = 1$, hence $\widetilde E \simeq \mathbb P_2$. Let $\eta: \widetilde E \to E$ be the normalization map;
the non-normal locus $N$ of $E$ is a line. 
We consider the subcase when the preimage $\widetilde N$ is a {\it smooth} conic; it might also be a line pair or a double line. The degree of $\eta \vert \widetilde N \to N$ is two. 
Then, $H^0(\widetilde E,T_{\widetilde E }) = H^0(\widetilde N, T_{\widetilde E} \vert \widetilde N)$ and the latter contains $H^0(\widetilde N, T_{\widetilde N})$. Thus there are three vectors fields
on $\widetilde E$, tangent to $\widetilde N$ and one of them is invariant under the double cover $\widetilde N \to N$ (by considering $T_{\widetilde N} \to \eta^*(T_N)$).


\end{proof}

\begin{remark} The remaning open case in Theorem \ref{cont2} is the following:  $E$ is normal, but singular, rational with only ADE-singularities containing no contractible smooth rational curve
 and $K_E^2 \leq 5$.
Then necessarily $\rho(E) \leq 2$, otherwise $E$ carries a birational contraction of an extremal ray. The way to treat this open case would be to show that there is a global deformation $X_t$ of $X$ 
(not only a local deformation of a neighborhood of $E$), such that $E$ deforms to a smooth del Pezzo surface $E_t$ and then to conclude again by semicontinuity.

\end{remark} 

We now turn to the case that $E$ is contracted to a curve $C$. We already know that $T_X$ has a first order deformation if  $E$ is smooth with $H^0(E,T_E) \ne 0$, e.g., if $C \simeq 
\mathbb P_1$. The case that $C$ is an elliptic curve is easy as well, provided $c_3(X) \ne 4$: 

\begin{proposition} Let $X$ be a Calabi-Yau threefold,  $\varphi: X \to Y$ be a primitive contraction with smooth exceptional divisor $E$ such that $C = \varphi(E) $ is a curve. 
If $g(C) = 1$ and if $c_3(X) \ne 4$, 
then $H^1(X,\Omega^1_X \otimes T_X) \ne 0.$ 
\end{proposition}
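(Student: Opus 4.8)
The plan is to reduce to the case where $E$ is a $\mathbb P_1$-bundle over $C$ and then apply Theorem \ref{thm:cont1}, whose two hypotheses (namely $H^0(E,T_E)\neq 0$ and $H$-stability of $\Omega^{[1]}_Y$) I will verify separately. First I would recall from Proposition \ref{prop:contraction}(3) that $\varphi_{\vert E}\colon E\to C$ is a conic bundle over the smooth elliptic curve $C$. If this conic bundle has a reducible fibre, then that fibre splits as $\ell_1\cup\ell_2$ with each $\ell_i$ a smooth $(-1)$-curve of the ruled surface $E$; computing normal bundles through $\ell_i\subset E\subset X$ and using $\sO_X(E)\vert E\simeq K_E$ (adjunction with $K_X\simeq\sO_X$) gives $N_{\ell_i/X}\simeq\sO_{\ell_i}(-1)\oplus\sO_{\ell_i}(-1)$. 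Such a curve is contractible by Grauert's criterion, so Corollary \ref{cor:small} already yields $H^1(X,T_X\otimes\Omega^1_X)\neq 0$. Hence I may assume every fibre is smooth, i.e. $E\to C$ is a $\mathbb P_1$-bundle; since the Brauer group of the function field of the curve $C$ vanishes (Tsen), I may write $E=\mathbb P(V)$ for a rank-two bundle $V$ on $C$.

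Next I would settle stability. By Lemma \ref{lem:stability}(3), if $\Omega^{[1]}_Y$ were not $H$-stable for any ample $H$, there would be a quasi-\'etale cover $Z\to Y$ with $Z$ abelian or $Z=B\times S$ ($B$ elliptic, $S$ a K3-surface); since we are now in the setting of Lemma \ref{lem:stability2} (a $\mathbb P_1$-bundle over $C$) with $C$ elliptic, Remark \ref{rem:stability} forces $c_3(X)=4$, contradicting the hypothesis. Therefore $\Omega^{[1]}_Y$ is $H$-stable for some ample $H$, which is precisely the second hypothesis of Theorem \ref{thm:cont1}.

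It then remains to show $H^0(E,T_E)\neq 0$. I would use the relative tangent sequence $0\to T_{E/C}\to T_E\to\varphi^*T_C\to 0$ together with $T_C\simeq\sO_C$ and $\varphi_*T_{E/C}\simeq S^2V\otimes\det V^*\simeq\sEnd_0(V)$, so that $H^0(E,T_{E/C})=H^0(C,\sEnd_0(V))$ and, the fibres being $\mathbb P_1$, $H^1(E,T_{E/C})=H^1(C,\sEnd_0(V))$. If $V$ is not simple then $H^0(C,\sEnd_0(V))\neq 0$ and a relative field already injects $T_{E/C}\hookrightarrow T_E$; if $V$ is simple then $\sEnd_0(V)$ is a self-dual degree-zero bundle with $h^0=0$, whence $h^1(C,\sEnd_0(V))=h^0(C,\sEnd_0(V))=0$ by Serre duality on the elliptic curve, so the boundary map $H^0(C,T_C)\to H^1(E,T_{E/C})$ vanishes and the translation field of $C$ lifts. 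Either way $H^0(E,T_E)\neq 0$, and Theorem \ref{thm:cont1} gives $H^1(X,\Omega^1_X\otimes T_X)\neq 0$, as desired.

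The main obstacle I anticipate is the bookkeeping in the stability step: one must be certain that the reducible-fibre reduction has already removed every case in which $E$ fails to be a genuine $\mathbb P_1$-bundle, so that Lemma \ref{lem:stability2} and Remark \ref{rem:stability} legitimately apply and $c_3(X)=4$ is the only obstruction to $H$-stability of $\Omega^{[1]}_Y$. The vector-field step, by contrast, is soft once the $\mathbb P_1$-bundle structure is in hand, since both the simple and the non-simple cases are controlled by the cohomology of $\sEnd_0(V)$ on the elliptic base.
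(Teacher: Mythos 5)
Your proof is correct, and in its second half it takes a genuinely different route from the paper. The stability step is the same in both: the hypothesis $c_3(X)\ne 4$ is used, via Remark \ref{rem:stability}, to exclude the non-stable alternative of Lemma \ref{lem:stability}(3), so that Proposition \ref{prop:contract} and Lemma \ref{jun28} (equivalently Theorem \ref{thm:cont1}) become available. Where you diverge is in producing the required nonvanishing on $E$. The paper reduces to $H^{2}(E,\Omega^1_X\otimes T_X\vert E)\ne 0$, assumes $H^0(E,T_E)=0$ (the complementary case being covered by Theorem \ref{thm:cont1}), and then runs a cohomology chase through the (co)normal sequences using $H^1(E,T_E)=0$, which it gets from Riemann--Roch and $c_1^2(E)=c_2(E)=0$. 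You instead first dispose of reducible conic-bundle fibres by observing that each component is a $(-1)$-curve of $E$ with $N_{\ell/X}\simeq\sO_\ell(-1)^{\oplus 2}$, so Corollary \ref{cor:small} applies; in the remaining case $E=\mathbb P(V)$ is a $\mathbb P_1$-bundle over the elliptic curve $C$, and you show directly that $H^0(E,T_E)\ne 0$ always holds there (relative fields if $V$ is non-simple; if $V$ is simple then $h^1(C,\sEnd_0(V))=h^0(C,\sEnd_0(V))=0$ by Serre duality on $C$, so the translation field lifts), whence Theorem \ref{thm:cont1} finishes. Your version buys two things: it shows the paper's branch ``$H^0(E,T_E)=0$'' is actually empty once $E$ is relatively minimal over $C$, so the $\Omega^1_E\otimes\Omega^1_E$ computation is not needed; and your explicit reduction to the $\mathbb P_1$-bundle case is in fact a necessary preliminary that the paper leaves implicit --- both the Riemann--Roch vanishing $H^1(E,T_E)=0$ (which uses $c_1^2(E)=c_2(E)=0$, i.e.\ relative minimality) and the Euler-characteristic argument behind Remark \ref{rem:stability} presuppose that $E\to C$ has no degenerate fibres. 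The paper's approach, on the other hand, is more uniform in that the same $H^2(E,\cdot)$ computation is reused for higher genus bases in the subsequent remark, where no vector field on $E$ is available.
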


\begin{proof} 
By Lemma \ref{jun28} and Remark \ref{rem:stability},   it suffices to show 
$$ H^2(E, \Omega^1_X \otimes T_X \vert E) \ne 0. $$ 
By Serre duality, this is equivalent to 
$$ H^0(E, \Omega^1_X \otimes T_X \vert E \otimes K_E) \ne 0. $$
Using the tangent bundle sequence, it suffices to show that 
$$ H^0(E, \Omega^1_X \otimes T_E \otimes K_E) = H^0(E,\Omega^1_X \otimes \Omega^1_E) \ne 0.$$
By the cotangent sequence, we obtain an exact sequence
$$ H^0(E,T_E) \to  H^0(E, \Omega^1_X \otimes \Omega^1_E) \to $$
$$ \to H^0(E,\Omega^1_E \otimes \Omega^1_E) \buildrel{\delta} \over {\to}  H^1(E,N_E^* \otimes \Omega^1_E) \simeq H^1(E,T_E). $$
Since we may assume $H^0(E,T_E) = 0 $ by Theorem \ref{thm:cont1} and since  clearly $H^2(E,T_E) = 0,$ Riemann-Roch shows, using $g(C) = 1,$ that 
$$ H^1(E,T_E) = 0.$$ 
Since $$ H^0(E,\Omega^1_E \otimes \Omega^1_E) \ne 0,$$
our claim follows. 

\end{proof}

\begin{remark} If $g(C) \geq 2,$ these simple arguments do no longer work (provided $\Omega_Y^{[1]}$ is $H$-stable). 
The difficulty is that $\dim H^1(E,T_E) = 6(g-1),$ assuming $H^0(E,T_E) = 0$, whereas 
$$\dim H^0(E,\Omega^1_E \otimes \Omega^1_E) = 3(g-1). $$ One would need to show that the
connecting map 
$$ \delta:  H^0(E,\Omega^1_E \otimes \Omega^1_E) \to H^1(E,N_E^* \otimes \Omega^1_E) \simeq H^1(E,T_E)$$
is not injective.
Actually, this statement can still be sharpened. In fact, assuming as always that $H^0(E,T_E) = 0$, then the sequence
$$ 0 \to T_{E/C} \to T_E \to \varphi^*(T_C) \to 0 $$
splits. Thus it suffices to show that 
$$ H^0(E,\Omega^1_X \vert E \otimes \varphi^*(T_C)) \ne 0. $$
This comes down to show that the canonical morphism
$$ \epsilon: H^0(E,\Omega^1_E \otimes \varphi^*(T_C)) \to H^1(E,N_E^* \otimes \Omega^1_E) $$
is not injective. 
Notice also that both vector spaces have the same dimension $3(g(C)-1)$. 
\end{remark} 

We treat next the case $g(C) \geq 2$ and the case that $E$ is singular by a more sophisticated method.

\begin{theorem} \label{thm:ruled}
Let $X$ be a Calabi-Yau threefold, $\varphi: X \to Y$ a primitive divisorial contraction contracting the exceptional divisor $E$ to a curve $C$. 
Then $H^1(X,T_X \otimes \Omega^1_X) \ne 0$ unless (possibly) $E$ is one of the following surfaces
\begin{enumerate}
\item $E$ is a normal singular surface and all singular fibers are double lines;
\item $E$ is a non-normal surface, $C \simeq \mathbb P_1$, but the normalization of $E$ is irrational. 
\end{enumerate} 
\end{theorem}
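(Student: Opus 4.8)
The plan is to analyze the conic bundle $\varphi|_E : E \to C$ furnished by Proposition \ref{prop:contraction}(3), organizing the argument around the three cases that govern the exceptions: $E$ smooth, $E$ normal but singular, and $E$ non-normal. Throughout I would exploit two engines. The first is Corollary \ref{cor:small}: a component of a reducible (line-pair) fibre is a smooth rational curve in $X$, and if its normal bundle is $\sO(-1)\oplus\sO(-1)$ or an isolated $\sO\oplus\sO(-2)$, then $H^1(X,T_X\otimes\Omega^1_X)\neq 0$. The second is Theorem \ref{thm:cont1}: when $H^0(E,T_E)\neq 0$ and $\Omega^{[1]}_Y$ is $H$-stable, the same conclusion holds; here stability is controlled by Lemma \ref{lem:stability}(3), whose failure forces a quasi-\'etale cover of $Y$ onto an abelian threefold or a product of an elliptic curve with a K3 surface, a structure I would rule out whenever $g(C)\geq 2$.

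First I would run the line-pair engine. Since $K_X\simeq\sO_X$, adjunction gives $N_{E/X}=\sO_E(E)=K_E$. If $E$ is smooth and carries a reducible fibre, each component $\ell$ is a $(-1)$-curve, so $N_{\ell/E}=\sO_\ell(-1)$ and $N_{E/X}|_\ell=K_E|_\ell=\sO_\ell(-1)$; the sequence $0\to N_{\ell/E}\to N_{\ell/X}\to N_{E/X}|_\ell\to 0$ then forces $N_{\ell/X}=\sO_\ell(-1)\oplus\sO_\ell(-1)$, and Corollary \ref{cor:small} applies. The same mechanism, with the normal-bundle computation carried out through the reduced component, settles every normal singular $E$ possessing a line-pair fibre, the two possible types being precisely $\sO(-1)\oplus\sO(-1)$ or an isolated $\sO\oplus\sO(-2)$. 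This already disposes of all $E$ whose singular fibres are not exclusively double lines.

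Next I would invoke Theorem \ref{thm:cont1} on the cases carrying a nonzero vector field. For a $\mathbb{P}_1$-bundle $E=\mathbb{P}(V)$ one has $\varphi_*T_{E/C}=\sEnd_0(V)$, so $H^0(E,T_E)\neq 0$ as soon as $V$ fails to be simple (and always when $C\simeq\mathbb{P}_1$, already treated); for non-normal $E$ whose normalization is a Hirzebruch surface I would, as in the proof of Theorem \ref{cont2}(5), trace vector fields through the normalization and show that one descends to $E$, again producing $H^0(E,T_E)\neq 0$. In each such case Lemma \ref{lem:stability}(3) secures stability of $\Omega^{[1]}_Y$ (the forbidden covers would make $C$ elliptic), and Theorem \ref{thm:cont1} concludes.

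The genuine obstacle is the residual smooth case $E=\mathbb{P}(V)$ with $V$ stable and $g(C)\geq 2$: here there are no reducible fibres and $H^0(E,T_E)=0$, so neither engine fires. I would instead aim at $H^2(E,\Omega^1_X\otimes T_X|_E)\neq 0$ and conclude by Lemma \ref{jun28}. By Serre duality and the splitting of $0\to T_{E/C}\to T_E\to\varphi^*T_C\to 0$ recorded just before the theorem, this reduces to proving that the connecting map
$$ \epsilon:\ H^0(E,\Omega^1_E\otimes\varphi^*T_C)\ \to\ H^1(E,N_E^*\otimes\Omega^1_E)\simeq H^1(E,T_E) $$
between spaces of equal dimension $3(g(C)-1)$ is not injective. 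This is the crux, and I expect to attack it either by a deformation and semicontinuity argument in the spirit of Theorem \ref{cont2}(2)---degenerating a neighbourhood of $E$ so that reducible fibres appear and then applying semicontinuity to $h^0(E,\Omega^1_X\otimes T_X|_E\otimes K_E)$---or by a direct Hodge-theoretic identification of $\epsilon$ as a cup-product whose kernel is forced to be nonzero. Finally, the two listed exceptions are exactly the configurations where both engines and this computation break down: when every singular fibre of a normal singular $E$ is a double line, the reduced fibre acquires normal bundle of the third Laufer type $\sO(1)\oplus\sO(-3)$, which Corollary \ref{cor:small} does not cover; and a non-normal $E$ with $C\simeq\mathbb{P}_1$ and irrational normalization admits neither a liftable vector field nor an exploitable contractible rational curve.
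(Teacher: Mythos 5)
Your proposal assembles the right auxiliary tools (Corollary \ref{cor:small} for contractible components of line-pair fibres, Theorem \ref{thm:cont1} when $H^0(E,T_E)\neq 0$), and these do match the paper's treatment of the easy cases. But the case you yourself identify as ``the genuine obstacle'' --- smooth $E=\mathbb P(V)$ with $V$ stable over a curve of genus $g(C)\geq 2$, where there are no reducible fibres and no vector fields --- is left unproved: you only say you ``expect to attack'' the non-injectivity of the connecting map $\epsilon$ by one of two unspecified methods. That is exactly the point where the paper's own remark preceding the theorem warns that ``these simple arguments no longer work,'' so the crux of the theorem is missing from your argument. The paper resolves it by an entirely different mechanism: by results of Gross and Wilson, whenever $g(C)\geq 1$ the divisorial contraction $\varphi$ deforms, in a one-parameter family $\mathcal X\to\Delta$ of Calabi--Yau threefolds with $X_0\simeq X$, to a \emph{small} contraction $\varphi_t:X_t\to Y_t$ (the divisor $E$ breaks up into finitely many contractible rational curves on the general $X_t$); Theorem \ref{thm:small} then gives $H^1(X_t,T_{X_t}\otimes\Omega^1_{X_t})\neq 0$, and semicontinuity transfers this back to $X$. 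This disposes of all positive-genus cases at once, including your residual stable-bundle case, without ever analyzing $\epsilon$.

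A second, smaller gap: you treat every line-pair fibre with the same normal-bundle computation, but when the \emph{general} fibre of $p:E\to C$ is reducible the components move in a one-dimensional family, so their normal bundle is of type $\sO\oplus\sO(-2)$ and is \emph{not} isolated; Corollary \ref{cor:small} then does not apply (Reid's contractibility criterion needs the curve not to move). The paper handles this situation separately in step (D), via the double cover $\tilde C\to C$ splitting the fibres, again falling back on the deformation-to-small-contraction argument when $g(C)>0$, and on the non-normal locus being a $(-1,-1)$-curve when $g(\tilde C)=0$; the leftover configuration $g(\tilde C)>0$, $g(C)=0$ is precisely exception (2). Without this case division your line-pair engine silently fails on conic bundles whose generic fibre is a line pair.
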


\begin{proof}
Recall that $p := \varphi_{\vert E}: E \to C$ is a conic bundle over the smooth curve $C$. 
We will also use the finer classification of $E$, due to Wilson  \cite{Wi92}, \cite{Wi93}, \cite{Wi97}. 
In fact, consider a singular fiber $E_c$ of $p$ which is a line pair $E_c = C_1 \cup C_2$ (with $C_1 \ne C_2$). 
Then there are three possibilities:
\begin{itemize} 
\item $E$ is smooth along $C_1 \cup C_2$;
\item $E$ is normal along along $C_1 \cup C_2$ and has an $A_n$-singularity at the intersection point $C_1 \cap C_2$ and is smooth elsewhere;
\item $E$ is non-normal, the normalisation is a $\mathbb P_1$-bundle over a smooth curve $\tilde C$ which is a double cover over $C$ 
and unramified over $c$. The singular locus of $E$ meets $C_1$ exactly in the intersection point $C_1 \cap C_2$. 
\end{itemize}

(A) Assume first that the general fiber of $p$ is irreducible, but $p$ has a reducible fiber $E_c$, and fix an irreducible component $B \simeq \mathbb P_1$. 
Then the conormal sheaf sequence 
$$ 0 \to N^*_{E/X} \vert B \to N^*_{B/X} \to N^*_{B/E} \to 0 $$
implies - together with the equation  $E \cdot B = -1$ - that 
$N^*_{B/X} $ is either $\sO_B(1) \oplus \sO_B(1)$ or $\sO_B(2) \oplus \sO_B$. 
In both cases $B$ is contractible; in the first case  by Grauert's criterion \cite{Gra62}, in the second case we apply again \cite[Cor. 5.6]{Re83}, using our assumption that the general fiber of $p$ is irreducible, hence
$B$ does not move. We also use the sequence 
$$ 0 \to N^*_{E_c/E} \vert B \simeq \sO_B \to N^*_{B/E} \to N^*_{B/E_c} \to 0 $$
to conclude that either $N^*_{B/E} \simeq \sO_B(1)$ or that $N^*_{B/E} \simeq \sO_B \oplus \sO_{x}$, where $\sO_{x}$ is a sheaf supported on the singularity of $E_c$ with 
one-dimensional stalk at $x$. 
Once we know that $B$ is contractible, we conclude by Proposition \ref{prop:small}. 
\vskip .2cm \noindent
(B) Next we consider the case $g(C) \geq 1$. 
By \cite[1.2,1.3]{Gro97a}, \cite[p.631 ff]{Wi97} there exists a flat family 
$$ \pi: \mathcal X \to \Delta$$
of Calabi-Yau threefolds $X_t$ over the unit disc 
with the following properties
\begin{enumerate}
\item $X_0 = \pi^{-1}(0) \simeq X$;
\item there is a relative crepant contraction
$ \Phi: \mathcal X \to \mathcal Y$ over $\Delta$ with $\Phi \vert X_0 = \varphi$;
\item $\varphi_t = \Phi \vert X_t: X_t \to Y_t$ is small;
\item $\varphi_t$ contracts the deformations of the finitely many fibers of $ E \to C$ which deform
to $X_t$.
\end{enumerate} 
By Theorem \ref{thm:small},  $H^1(X_t, T_{X_t} \otimes \Omega^1_{X_t}) \ne 0$,  hence $H^1(X,T_X \otimes \Omega^1_X) \ne 0$ by
 semi-continuity. \\

\vskip .2cm \noindent 
(C) We next  consider the case that $C \simeq \mathbb P_1$ and that all fibers of $p: E \to C$ are irreducible. If $E$ is smooth, $p$ is a $\mathbb P_1$-bundle, and we are done by Lemma \ref{lem:stability2}, Theorem \ref{thm:cont1} in
connection with Remark \ref{rem:5.7}. 
If $E$ is singular, then $E$ is normal and by (A), we may assume that the only singular fibers are double lines. This case is ruled out by assumption.

\vskip .2cm \noindent 
(D) Assume finally that the general fiber of $p$ is reducible. Then there is a double cover $\tilde C \to C$ such that the fiber product $\tilde E \to \tilde C$ is a $\mathbb P_1$-bundle. 
If $g( C) > 0$, then 
by \cite[p.635]{Wi97} and \cite[p.294]{Gro97a}, the general deformation $X_t$ of $X$ carries a small contraction. Hence 
$$H^1(X_t,T_{X_t} \otimes \Omega^1_{X_t}) \ne 0$$
by Theorem \ref{thm:small}, and we conclude by semicontinuity. \\
If $g(\tilde C) = 0$, then by \cite[p.635]{Wi97}, the non-normal locus of $E$ is a $(-1,-1)$-curve and we conclude by Proposition \ref{prop:small}
Finally, the case $g(\tilde C) > 0$ and $g(C) = 0 $ is ruled out by assumption.

 
\end{proof} 

\begin{remark}  The exceptional case (b) might be ruled out as follows. Consider again a general deformation $X_t$ of $X$. Then $E$ deforms to a rational surface which is a conic
bundle over $\mathbb P_1$; see  \cite[p.635]{Wi97}. One might expect that not all singular fibers are double lines, hence $H^1(X_t,T_{X_t} \otimes \Omega^1_{X_t}) \ne 0$ 
and we conclude again by semicontinuity. Thus the difficulty is that in all deformation $E_t \subset X_t$, we land in the exceptional case (1). 

\end{remark}

We finally consider small contractions $\varphi: X \to Y$. 

\begin{theorem} \label{thm:small}  Let $X$ be a Calabi-Yau threefold and $\varphi: X \to Y$ be a small contraction. Then 
$H^1(X,T_X \otimes \Omega^1_X) \ne 0$. 
\end{theorem}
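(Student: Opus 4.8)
The plan is to reduce the general small contraction to the situation already handled in Proposition \ref{prop:small}, which treats a birational morphism to a normal Moishezon space whose exceptional locus is a single smooth rational curve. First I would recall the structure of small contractions of Calabi-Yau threefolds: since $\varphi$ is small and $K_X \simeq \sO_X$, the target $Y$ has Gorenstein terminal, hence compound Du Val, and in particular hypersurface singularities, which is exactly the local hypothesis used in Proposition \ref{prop:small}; moreover the exceptional locus $\mathrm{Exc}(\varphi)$ is a finite union of rational curves, each irreducible component of which is a smooth rational curve by the structure theory of flopping contractions. Thus it suffices to produce a birational morphism $\psi \colon X \to Z$ onto a normal compact Moishezon space whose exceptional set is one single smooth rational curve $C \subset \mathrm{Exc}(\varphi)$, and then to invoke Proposition \ref{prop:small} for $\psi$; this yields $H^1(X, T_X \otimes \Omega^1_X) \neq 0$ for the original $X$, since the conclusion is a statement about $X$ alone.

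Second I would isolate such a curve. Let $C$ be an irreducible component of $\mathrm{Exc}(\varphi)$. Because the exceptional locus is one-dimensional, $C$ does not move in $X$, so $C$ is an isolated rational curve in the sense of \cite{Re83}, and its normal bundle $N_{C/X}$ is one of the three Laufer types \cite{La81}. If $N_{C/X} \simeq \sO_C(-1) \oplus \sO_C(-1)$, then $C$ is contractible by Grauert's criterion \cite{Gra62}; if $N_{C/X} \simeq \sO_C \oplus \sO_C(-2)$, then $C$ is contractible by \cite[Cor. 5.6]{Re83} since $C$ is isolated. In either case the resulting contraction $\psi \colon X \to Z$ is a birational morphism to a normal space whose exceptional set is exactly $C$, and $Z$ is compact Moishezon because $X$ is. Applying Proposition \ref{prop:small} (equivalently Corollary \ref{cor:small}) finishes these cases; the third Laufer type $\sO_C(1) \oplus \sO_C(-3)$ is covered as well once $C$ is shown to be individually contractible, since the proof of Proposition \ref{prop:small} treats all three normal bundles.

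The main obstacle I expect is the case of a reducible (connected) fiber, where the chosen component $C$ meets other contracted curves: then one cannot simply read off a negative normal bundle for $C$ inside $X$, and the existence of a morphism collapsing $C$ alone to a normal germ is not automatic. I would handle this by choosing $C$ to be an extremal (leaf) component of the configuration of the fiber and using the detailed classification of the fibers of flopping contractions of Calabi-Yau threefolds to guarantee that at least one component admits an admissible normal bundle and is individually contractible to a normal Moishezon germ; the delicate point is ensuring that collapsing a single component of a connected tree of rational curves still yields a normal space to which Proposition \ref{prop:small} applies. An alternative I would keep in reserve is to globalize the Leray and $\sExt$ computation of Proposition \ref{prop:small} directly over the several hypersurface singularities of $Y$, deriving the same contradiction between the lower bound on $h^2(Y,\sF)$ coming from $R^1\varphi_\ast$ and the upper bound coming from the local length of $\sExt^1(\sF,\sO_Y)$.
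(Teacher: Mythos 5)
Your overall strategy -- reduce to Proposition \ref{prop:small} by contracting a single smooth rational curve of admissible normal bundle -- is exactly the paper's, and your treatment of the types $(-1,-1)$ and $(0,-2)$ via Grauert \cite{Gra62} and Reid \cite{Re83} reproduces Corollary \ref{cor:small}. But two steps are left genuinely open. First, your claim that the type $\sO_C(1)\oplus\sO_C(-3)$ ``is covered as well once $C$ is shown to be individually contractible'' is circular: a $(1,-3)$-curve is not contractible by Grauert's criterion (the normal bundle is not negative) nor by Reid's result, and you offer no mechanism to contract it. Second, you explicitly flag the reducible-fiber case as an unresolved ``obstacle'' and only sketch a plan (pick a leaf component, hope the classification cooperates); that is not a proof, and as written your argument does not rule out the possibility that every component you can isolate is of the bad type.

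The missing ingredient, which the paper takes from \cite{La81}, \cite{Pin83}, \cite[Thm.5.5]{Mor85}, \cite{Fri86}, is the precise structure statement: \emph{every} irreducible component $C_j$ of \emph{every} fiber (reducible or not) is a smooth rational curve with normal bundle of one of the three types, \emph{and at most one component per fiber has type $(1,-3)$}. With this, the argument closes cleanly: Grauert's and Reid's criteria contract a single $(-1,-1)$- or $(0,-2)$-component irrespective of the other curves it meets (so your worry about normality of the target after collapsing one branch of a tree is unfounded -- this is precisely what Corollary \ref{cor:small} delivers), and the only case where no such component exists is an irreducible fiber of type $(1,-3)$; there $\varphi$ itself, localized near that fiber, is already a birational morphism contracting exactly one smooth rational curve, so Proposition \ref{prop:small} applies directly with no new contractibility argument needed. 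Note also that your citation of Laufer for the normal bundles of components of reducible fibers is misplaced -- Laufer's trichotomy is for an irreducible exceptional curve; the reducible case requires the later references. Your fallback idea of globalizing the $\sExt$ computation over several singular points of $Y$ is plausible but unnecessary once the component-selection argument is repaired.
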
 

\begin{proof} By \cite{La81}, \cite{Pin83}, \cite[Thm.5.5]{Mor85}, see also \cite{Fri86}, any fiber $F$ of $\varphi$ has the form $F = \cup C_j$ with smooth rational curves $C_j$. All $C_j$ have normal bundle 
$$ N_{C/X} = \sO_C(a) \oplus \sO_C(b) $$
with $(a,b) = (-1,-1),(0,-2),(1,-3).$ Moreover, there is at most one curve with $(a,b) = (1,-3)$.  
Thus we conclude by Corollary \ref{cor:small}, applied to some component $C_j$ whose normal bundle is not of type $(1,-3)$ - unless $F = C_1$ is irreducible with normal bundle of type $(1,-3)$. Then we apply Proposition \ref{prop:small}.

\end{proof}

Any small contraction $\varphi: X \to Y$ gives rise to a flop $h: X \dasharrow X^+$ with a (smooth) Calabi-Yau threefold $X^+$, \cite[2.4]{Ko89}. 
We finally relate the deformations of $T_X$ to those of $T_{X^+}$.

\begin{proposition} \label{prop:flop} 
Let $h: X \dasharrow X^+$ be a flop of the Calabi-Yau threefold $X$, induced by the small contraction $\varphi: X \to Y.$  Then 
$$ H^1(X,T_X \otimes \Omega^1_X)  \simeq  H^1(X,T_{X^+} \otimes \Omega^1_{X^+}). $$
Moreover, every positive-dimensional deformation of $T_X$ over (a germ of ) an irreducible reduced complex space $S$  induces canonically a positive-dimensional deformation of
 $T_{X^+}$ over $S.$ 

\end{proposition}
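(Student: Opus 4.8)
The plan is to exploit that the flop $h$ is an isomorphism in codimension one and to compare both sides through the common contraction $Y$. Write $Z \subset X$ and $Z^+ \subset X^+$ for the (finite unions of smooth rational) curves contracted by $\varphi$ and by $\varphi^+ \colon X^+ \to Y$, and set $U := X \setminus Z$, $U^+ := X^+ \setminus Z^+$. Both $\varphi$ and $\varphi^+$ restrict to isomorphisms onto $Y_0 := Y \setminus {\rm Sing}(Y)$, so $h$ identifies $U$ with $U^+$; since $\codim_X Z = \codim_{X^+} Z^+ = 2$ and $T_X$, $T_{X^+}$ are locally free, hence reflexive, we have $T_X = j_*(T_X\vert U)$ and $T_{X^+} = j^+_*(T_{X^+}\vert U^+)$, while $T_X\vert U \simeq T_{Y_0} \simeq T_{X^+}\vert U^+$ under this identification. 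Exactly as in the proof of Proposition \ref{prop:small}, smallness gives
$$ \varphi_*(T_X \otimes \Omega^1_X) = (T_Y \otimes \Omega^{[1]}_Y)^{**} =: \sF = \varphi^+_*(T_{X^+}\otimes\Omega^1_{X^+}), $$
the \emph{same} reflexive sheaf on $Y$, since both sides agree with $\sF$ on $Y_0$ and are reflexive on $Y$.

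For the first assertion I would run the Leray spectral sequence for $\varphi$ and for $\varphi^+$. As every fibre of a small contraction of a threefold has dimension at most one, $R^q\varphi_*(T_X\otimes\Omega^1_X) = 0$ for $q \geq 2$, and $R^1\varphi_*(T_X\otimes\Omega^1_X)$ is a skyscraper supported on the finite set ${\rm Sing}(Y)$. Hence the spectral sequence degenerates to the four-term exact sequence
$$ 0 \to H^1(Y,\sF) \to H^1(X,T_X\otimes\Omega^1_X) \to H^0\bigl(Y, R^1\varphi_*(T_X\otimes\Omega^1_X)\bigr) \xrightarrow{d_2} H^2(Y,\sF), $$
and likewise for $X^+$. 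Since the outer terms $H^1(Y,\sF)$ and $H^2(Y,\sF)$ depend only on $Y$, matters reduce to identifying the contribution $\ker d_2$ with its analogue $\ker d_2^+$.

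Rather than compare these two $R^1$-contributions by hand, I would obtain the identification together with the second (stronger) assertion from a single functorial construction. Given a deformation $\sE$ of $T_X$ over a germ $(S,0)$, restrict it to $U \times S$, transport it to $U^+ \times S$ via $h \times \id_S$, and extend by the reflexive push-forward $\Psi(\sE) := (j^+_S)_*\bigl(\sE\vert U^+\times S\bigr)$, where $j^+_S \colon U^+\times S \hookrightarrow X^+\times S$. Since $\sE$ is $S$-flat with central fibre the locally free sheaf $T_X$, its restriction to $U^+\times S$ is $S$-flat with central fibre $T_{Y_0}$, and reflexive extension across the codimension-two locus $Z^+\times S$ should give $\Psi(\sE)_0 = j^+_*(T_{Y_0}) = T_{X^+}$. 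The reverse flop produces an inverse construction, so $\Psi$ is an equivalence; applied over the double point $D$ it identifies the first-order deformation spaces $\Ext^1_X(T_X,T_X)=H^1(X,T_X\otimes\Omega^1_X)$ and $\Ext^1_{X^+}(T_{X^+},T_{X^+})=H^1(X^+,T_{X^+}\otimes\Omega^1_{X^+})$, yielding the required isomorphism, while over a reduced irreducible $S$ it produces the claimed canonical positive-dimensional deformation of $T_{X^+}$.

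The main obstacle is to prove that $\Psi(\sE)$ is again a flat, coherent family with the correct central fibre, i.e.\ that forming the reflexive hull across the flopped curves commutes with base change to each fibre of $S$. Coherence follows from reflexivity and $\codim Z^+ = 2$ after reducing to $S$ normal; the delicate point is $S$-flatness, since the local cohomology $H^2_{Z^+\times S}$ along the flopped curves could a priori jump. I would control this using the explicit local models of the flop --- the normal bundles $\sO_C(-1)\oplus\sO_C(-1)$, $\sO_C\oplus\sO_C(-2)$, $\sO_C(1)\oplus\sO_C(-3)$ of Laufer and Reid recalled in the proof of Theorem \ref{thm:small} --- to verify that the relevant higher direct images and local cohomology modules are locally free of constant rank over $S$, so that the reflexive extension is flat and base-change compatible. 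The same bookkeeping in degree two would give $H^2(X,T_X\otimes\Omega^1_X)\simeq H^2(X^+,T_{X^+}\otimes\Omega^1_{X^+})$, matching obstruction spaces and confirming that $\Psi$ respects obstructedness.
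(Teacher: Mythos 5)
Your reduction to the common reflexive sheaf $\sF=(T_Y\otimes\Omega^{[1]}_Y)^{**}$ on $Y$, with $\varphi_*(T_X\otimes\Omega^1_X)=\sF=\varphi^+_*(T_{X^+}\otimes\Omega^1_{X^+})$, is exactly the paper's key observation, and your treatment of the second assertion (transport the family across the flop and extend) is the paper's argument. Note, however, that for a \emph{reduced} irreducible germ $S$ the flatness you worry about is cheap: the central fibre of the transported sheaf is the locally free sheaf $T_{X^+}$, hence nearby fibres are locally free of constant rank, and no bookkeeping with the Laufer--Reid local models is needed.

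The genuine gap is in the first assertion. You correctly isolate the problem as comparing $\ker d_2$ with $\ker d_2^+$ in the two Leray sequences, but the functor $\Psi$ does not resolve it. Over the double point $D$ the base is non-reduced, so the ``nearby fibres'' argument is unavailable, and proving that $\Psi(\sE)=(j^+_S)_*(\sE\vert_{U^+\times D})$ is $D$-flat with central fibre $T_{X^+}$ amounts to proving that $0\to T_{X^+}\to\Psi(\sE)\to T_{X^+}$ is exact on the right; the obstruction to this lives in $R^1j^+_*$ (local cohomology along $Z^+$) and vanishes precisely when the transported class $e\vert_{U^+}$ lies in the image of the restriction $H^1(X^+,\sEnd(T_{X^+}))\to H^1(U^+,\sEnd(T_{X^+})\vert_{U^+})$ --- which is the very statement you are trying to establish. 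So the construction is circular at the crucial point, and ``controlling it by the explicit local models'' is not a routine verification but the entire content of the claim. The paper sidesteps all of this with one observation you missed: on a Calabi--Yau threefold $T_X\otimes\Omega^1_X$ is self-dual and $K_X\simeq\sO_X$, so Serre duality gives $h^1(X,T_X\otimes\Omega^1_X)=h^2(X,T_X\otimes\Omega^1_X)$ and likewise on $X^+$; the first claim therefore reduces to comparing the $H^2$'s, which the Leray spectral sequence (using $R^{\geq 2}\varphi_*=0$ and the fact that $R^1\varphi_*$ is a skyscraper) computes from the single sheaf $\sF$ on $Y$. Rerouting your argument through $H^2$ rather than $H^1$ is the missing idea.
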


\begin{proof} 
In order to show the first claim, it suffices to show that 
$$ H^2X,T_X \otimes \Omega^1_X)  \simeq  H^2(X,T_{X^+} \otimes \Omega^1_{X^+}). $$
The flop being induced by a small contraction $\varphi: X \to Y$, we let  
$\varphi^+: X^+ \to Y$ denote the associated flopped small morphism. 
The Leray spectral sequence gives 
$$  H^2(X,T_X \otimes \Omega^1_X)  \simeq  H^2(Y,\varphi^+_*(T_X \otimes \Omega^1_X)) $$
and   
$$  H^2(X^+,T_{X^+} \otimes \Omega^1_{X^+})  \simeq  H^2(Y,\varphi^+_*(T_{X^+} \otimes \Omega^1_{X^+})) .$$
The sheaves  $\varphi_*(T_X \otimes \Omega^1_X)$ and $\varphi^+_*(T_{X^+} \otimes \Omega^1_{X^+})$ are reflexive and
isomorphic outside a finite set. Hence they are isomorphic on all of $Y$, and the claim follows. \\
As to the second claim, let $\mathcal E$ be a flat deformation of $T_X$ over $X \times S.$ The locally free sheaf $\mathcal E$ induces a coherent sheaf
$\mathcal E^+$ over $X^+ \times S$, such that $(\mathcal E^+)_{\vert X^+ \times \{0\}}\simeq T_{X^+}.$ 
In particular,  $(\mathcal E^+) _{\vert X^+ \times \{0\}} $ is locally free and so does  $(\mathcal E^+)_{\vert X^+ \times \{s\}}$ for small $s$. 
Hence $\mathcal E^+$ is flat over $S.$

\end{proof}

\begin{corollary}  Let $X$ and $X'$ be birationally equivalent Calabi-Yau threefolds. Then
$$ H^1(X,T_X \otimes \Omega^1_X)  \simeq  H^1(X,T_{X'} \otimes \Omega^1_{X'}). $$
Moreover, every positive-dimensional deformation of $T_X$ induces canonically a posi\-tive-dimensional deformation of
 $T_{X'}.$
\end{corollary}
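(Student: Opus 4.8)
The plan is to reduce the statement to a finite chain of flops and then invoke Proposition \ref{prop:flop} one step at a time. The essential geometric input is the theorem of Kollár \cite{Ko89} that any two birationally equivalent smooth minimal threefolds are connected by a finite sequence of flops. Since a Calabi-Yau threefold satisfies $K_X \simeq \sO_X$, it is its own (smooth) minimal model, so this result applies directly; moreover, in dimension three each intermediate variety produced along such a chain is again a smooth Calabi-Yau threefold, and each elementary step is a flop induced by a small contraction in the precise sense needed here.

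Granting this, I would write the birational equivalence as a composition
$$ X = X_0 \dasharrow X_1 \dasharrow \cdots \dasharrow X_N = X', $$
where each map $X_i \dasharrow X_{i+1}$ is a flop induced by a small contraction $\varphi_i : X_i \to Y_i$ between smooth Calabi-Yau threefolds (so that $X_{i+1}$ plays the role of $X_i^+$ in Proposition \ref{prop:flop}). For each single step, Proposition \ref{prop:flop} supplies an isomorphism
$$ H^1(X_i, T_{X_i} \otimes \Omega^1_{X_i}) \simeq H^1(X_{i+1}, T_{X_{i+1}} \otimes \Omega^1_{X_{i+1}}), $$
together with a canonical procedure transporting any positive-dimensional deformation of $T_{X_i}$ to one of $T_{X_{i+1}}$. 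Composing the $N$ isomorphisms yields the first assertion of the corollary, and composing the $N$ transport maps yields the second.

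The main obstacle is the reduction to single flops, i.e. justifying that the chain furnished by minimal model theory can be taken so that every intermediate $X_i$ is a \emph{smooth} Calabi-Yau threefold and every elementary step is genuinely a flop induced by a small contraction, as required by Proposition \ref{prop:flop}. This is exactly what Kollár's three-dimensional analysis guarantees: each such contraction has relative Picard number one, its exceptional locus is a finite union of smooth rational curves with normal bundles of the types appearing in Theorem \ref{thm:small}, and the flopped threefold is again smooth with trivial canonical bundle. A secondary, more bookkeeping issue is to confirm that the deformation transport of Proposition \ref{prop:flop} behaves well under composition along the chain; this causes no trouble, since each step is determined once the small contraction $\varphi_i$ is fixed, and flatness of the successive sheaves $\mathcal E^{+}$ near the zero fiber propagates exactly as in the proof of that proposition.
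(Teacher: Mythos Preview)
Your proposal is correct and follows exactly the paper's approach: reduce to a finite sequence of flops via Koll\'ar \cite{Ko89} and apply Proposition \ref{prop:flop} at each step. The paper's proof is in fact just the one-line remark that any birational map between smooth Calabi-Yau threefolds is a sequence of flops; your additional commentary on smoothness of intermediate models and composition of the deformation transport is consistent with what is implicitly used.
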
 

\begin{proof} It suffices to remark that any birational map between (smooth) Calabi-Yau threefolds is a sequence of
flops, \cite{Ko89}. 
\end{proof} 


\section{Higher Dimensions}
\setcounter{lemma}{0}

We finish the paper with some results in higher dimensions: hypersurfaces in projective space, and products, the latter being important for the correct set-up of Question \ref{Q}. 

\begin{theorem} \label{thm:n} Let $X \subset \mathbb P_{n+1}$, $n \geq 2$, be a smooth hypersurface of degree $d \geq 2$. 
Then $T_X$ has a genuine first order deformation. 

\end{theorem}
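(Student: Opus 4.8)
The plan is to reduce everything, via (\ref{eq1-1}), to the nonvanishing $H^1(X,T_X\otimes\Omega^1_X)\neq 0$. Since $n\geq 2$, the Lefschetz hyperplane theorem gives $q(X)=h^1(X,\sO_X)=0$, so a genuine first order deformation exists precisely when $H^1(X,T_X\otimes\Omega^1_X)\neq 0$. The surface case $n=2$ is then immediate from Corollary \ref{cor1}: for a smooth surface of degree $d$ in $\mathbb P_3$ one has $c_1^2(X)=d(d-4)^2$ and $c_2(X)=d(d^2-4d+6)$, whence
$$ 3c_2(X)-c_1^2(X)=2d(d-1)^2>0\qquad(d\geq 2),$$
and Corollary \ref{cor1} even yields a genuine non-obstructed deformation. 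So from now on I assume $n\geq 3$.

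For $n\geq 3$ I would argue through the two fundamental sequences of the embedding. Tensoring the normal bundle sequence
$$ 0\to T_X\to T_{\mathbb P_{n+1}}\vert X\to \sO_X(d)\to 0 $$
with $\Omega^1_X$ and taking cohomology, the connecting homomorphism identifies the cokernel of the restriction map $H^0(X,\Omega^1_X\otimes(T_{\mathbb P_{n+1}}\vert X))\to H^0(X,\Omega^1_X(d))$ with a subspace of $H^1(X,\Omega^1_X\otimes T_X)$. Hence it suffices to show that this restriction map is \emph{not} surjective, and for that I only need the two estimates
$$ h^0(X,\Omega^1_X\otimes(T_{\mathbb P_{n+1}}\vert X))\leq 1\qquad\text{and}\qquad h^0(X,\Omega^1_X(d))\geq 2. $$

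The upper bound comes from the restricted Euler sequence $0\to\sO_X\to\sO_X(1)^{\oplus(n+2)}\to T_{\mathbb P_{n+1}}\vert X\to 0$ tensored with $\Omega^1_X$: since $H^0(X,\Omega^1_X)=0$ and $H^0(X,\Omega^1_X(1))=0$, the global sections of $\Omega^1_X\otimes(T_{\mathbb P_{n+1}}\vert X)$ inject into $H^1(X,\Omega^1_X)$, and the latter is one-dimensional because $h^{1,1}(X)=h^{1,1}(\mathbb P_{n+1})=1$ by Lefschetz (this is exactly where $n\geq 3$ enters). The vanishing $H^0(X,\Omega^1_X(1))=0$, together with the identification $h^0(X,\Omega^1_X(d))=h^0(\mathbb P_{n+1},\Omega^1_{\mathbb P_{n+1}}(d))\geq 2$ for $d\geq 2$, I would read off from the conormal sequence $0\to\sO_X(-d)\to\Omega^1_{\mathbb P_{n+1}}\vert X\to\Omega^1_X\to 0$ and the restriction sequence, combined with Bott's formula for $H^\bullet(\mathbb P_{n+1},\Omega^1_{\mathbb P_{n+1}}(k))$. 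Putting the two estimates together, the cokernel above has dimension at least $2-1=1$, so $H^1(X,T_X\otimes\Omega^1_X)\neq 0$, as required.

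The only genuinely delicate points — and the reason the statement is "easy" precisely for $n\geq 3$ — are the two inputs that collapse the source of the restriction map: the equality $h^{1,1}(X)=1$, which forces $H^0(X,\Omega^1_X\otimes(T_{\mathbb P_{n+1}}\vert X))$ down to at most a line, and the vanishing $H^0(X,\Omega^1_X(1))=0$. Both fail to be useful in dimension two (there $h^{1,1}(X)$ is large), which is exactly why the surface case must be isolated and treated through the Riemann--Roch computation behind Corollary \ref{cor1}.
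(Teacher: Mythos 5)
Your proof is correct, and for $n\geq 3$ it follows essentially the same route as the paper: the normal bundle sequence tensored with $\Omega^1_X$, with the Euler sequence used to collapse $H^0(X,T_{\mathbb P_{n+1}}\vert_X\otimes\Omega^1_X)$, so that (a piece of) $H^0(X,\Omega^1_X(d))$ injects into $H^1(X,T_X\otimes\Omega^1_X)$. The difference lies in the bookkeeping and in the surface case. The paper runs the exact-sequence argument uniformly for all $n\geq 2$ and asserts $h^0(X,T_{\mathbb P_{n+1}}\vert_X\otimes\Omega^1_X)=1$ outright, after which $H^0(X,\Omega^1_X(d))\neq 0$ suffices; you instead prove only the upper bound $h^0\leq 1$ (via $H^0(X,\Omega^1_X(1))=0$ and $h^{1,1}(X)=1$) and compensate with the slightly stronger, but still easy, lower bound $h^0(X,\Omega^1_X(d))\geq 2$. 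More substantively, you correctly note that the Lefschetz input $h^{1,1}(X)=1$ is available only for $n\geq 3$ — for a surface in $\mathbb P_3$ the space $H^0(X,T_{\mathbb P_3}\vert_X\otimes\Omega^1_X)$ is a priori only sandwiched between $1$ and $h^{1,1}(X)$, so the paper's one-line evaluation is not justified as written in that case — and you bypass this by settling $n=2$ through the Chern class computation $3c_2(X)-c_1^2(X)=2d(d-1)^2>0$ and Corollary \ref{cor1}. This buys you a cleaner treatment of the surface case and even the stronger conclusion there (a genuine \emph{non-obstructed} deformation), at the cost of a case split that the paper's more uniform (but, at $n=2$, less airtight) argument avoids.
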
 

\begin{proof} Since $H^1(X,\sO_X) = 0$, it suffices to show that $H^1(X,T_X \otimes \Omega^1_X) \ne 0$. 
We use the cohomology sequence 
$$ 0 \to H^0(X,T_X \otimes \Omega^1_X) \to H^0(X,T_{\mathbb P_{n+1}} \vert _X \otimes \Omega^1_X) \to H^0(X,\Omega^1_X(d)) \to $$
$$ \to  H^1(X,T_X \otimes \Omega^1_X) .$$
The Euler sequence in combination with the vanishing $H^1(X,\Omega^1_X(1)) = 0$ (use the cotangent sequence for $X \subset \mathbb P_{n+1})$
yields 
$$ h^0(X,T_{\mathbb P_{n+1}} \vert _X \otimes \Omega^1_X) = 1.$$
Thus $$ H^0(X,\Omega^1_X(d)) \subset H^1(X,T_X \otimes \Omega^1_X) $$
and it suffices to observe that $ H^0(X,\Omega^1_X(d)) \ne 0$, which is clear since $\Omega^1_{\mathbb P_{n+1}}(2)$ is generated by global sections. 
\end{proof} 

Concerning products, we first consider the case of two factors. 

\begin{proposition} \label{product1} Let $X = X_1 \times X_2$ be a projective manifold with $\dim X_j \geq 1$. 
Then $$H^1(X,\sEnd_0(T_X)) = 0 $$ if and only if the following conditions hold for $j = 1,2$. 
\begin{enumerate} 
\item $H^0(X_j,T_{X_j}) = 0 $;
\item $q(X_j) = 0 $;
\item $ H^1(X_j, T_{X_j} \otimes \Omega^1_{X_j}) = H^1(X_j,\sEnd_0(T_{X_j})) = 0$. 

\end{enumerate}
In particular, $\dim X_j \geq 2$ for $j = 1,2$. 

\end{proposition}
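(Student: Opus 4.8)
The plan is to compute $H^1(X,\sEnd_0(T_X))$ by means of the Künneth decomposition of $T_X \otimes \Omega^1_X$ over the product $X = X_1 \times X_2$, and to show that the vanishing of this group forces each of the listed conditions (and conversely). Throughout write $p_j : X \to X_j$ for the projections and recall the splitting $T_X = p_1^*T_{X_1} \oplus p_2^*T_{X_2}$, dually $\Omega^1_X = p_1^*\Omega^1_{X_1} \oplus p_2^*\Omega^1_{X_2}$.

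\smallskip
\emph{Step 1: decompose $T_X \otimes \Omega^1_X$.} Tensoring the two splittings gives a direct sum of four summands,
$$
T_X \otimes \Omega^1_X \simeq \bigoplus_{i,j} p_i^*T_{X_i} \otimes p_j^*\Omega^1_{X_j}.
$$
The two diagonal summands ($i=j$) are $p_j^*(T_{X_j}\otimes\Omega^1_{X_j})$, while the two off-diagonal summands ($i\ne j$) are external tensor products $p_1^*T_{X_1}\otimes p_2^*\Omega^1_{X_2}$ and its transpose. The plan is then to apply the Künneth formula to each summand in order to expand $H^1(X, T_X\otimes\Omega^1_X)$, and afterwards to extract $\sEnd_0$ by subtracting the contribution of $H^\bullet(X,\sO_X)$, using the isomorphism $T_X\otimes\Omega^1_X \simeq \sEnd_0(T_X)\oplus\sO_X$ recorded in the opening Remark.

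\smallskip
\emph{Step 2: read off the $H^1$ contributions by Künneth.} First I would note that $H^1(X,\sEnd_0(T_X)) = 0$ is equivalent to
$$
h^1(X, T_X\otimes\Omega^1_X) = h^1(X,\sO_X) = q(X_1)+q(X_2).
$$
Expanding the left side via Step 1 and Künneth, the diagonal summands contribute (among other terms) $h^1(X_1,T_{X_1}\otimes\Omega^1_{X_1})$ and $h^1(X_2,T_{X_2}\otimes\Omega^1_{X_2})$, together with cross-terms such as $h^0(X_1,T_{X_1}\otimes\Omega^1_{X_1})\cdot q(X_2)$. The off-diagonal summand $p_1^*T_{X_1}\otimes p_2^*\Omega^1_{X_2}$ contributes, via Künneth in degree $1$, the terms $h^0(X_1,T_{X_1})\cdot h^1(X_2,\Omega^1_{X_2})$ and $h^1(X_1,T_{X_1})\cdot h^0(X_2,\Omega^1_{X_2})$ and, crucially, $h^0(X_1,T_{X_1})\cdot h^1(X_2,\sO_X{}_2)$ after identifying $\Omega^1_{X_2}$-cohomology; the symmetric summand gives the transposed terms. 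The strategy is to assemble all of these into a single expression and compare with $q(X_1)+q(X_2)$. Since every term in the expansion is nonnegative, the equality can hold only if each "excess" term vanishes. This yields, summand by summand: $h^1(X_j,\sEnd_0(T_{X_j}))=0$ together with the constraints coming from the cross-terms, namely $h^0(X_j,T_{X_j})\cdot q(X_{j'})=0$ and similar products.

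\smallskip
\emph{Step 3: disentangle the product constraints.} The main obstacle I expect is bookkeeping: turning the vanishing of a sum of products of cohomology dimensions into the clean separated conditions (1)--(3) for each factor. The key observation that unlocks this is that the identity endomorphism of each factor already produces a nonzero class: $\id_{T_{X_j}}\in H^0(X_j,T_{X_j}\otimes\Omega^1_{X_j})$ is always present, so $h^0(X_j,T_{X_j}\otimes\Omega^1_{X_j})\ge 1$. Consequently a cross-term like $h^0(X_1,T_{X_1}\otimes\Omega^1_{X_1})\cdot q(X_2)$ is bounded below by $q(X_2)$, and its vanishing forces $q(X_2)=0$; symmetrically $q(X_1)=0$, giving condition (2). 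With $q(X_j)=0$ the right side is $0$ and every remaining term must vanish individually: the off-diagonal terms $h^0(X_1,T_{X_1})\cdot h^1(X_2,\Omega^1_{X_2})$ and their relatives force $h^0(X_j,T_{X_j})=0$ (condition (1)), since otherwise a nonzero global vector field on one factor paired with a nonzero $(1,1)$- or $\sO$-class on the other would survive; and the diagonal terms directly give condition (3). The converse direction is then immediate: if (1)--(3) hold for both factors, the same Künneth expansion shows every summand of $h^1(X,T_X\otimes\Omega^1_X)$ beyond $q(X)=0$ vanishes, so $h^1(X,\sEnd_0(T_X))=0$. Finally, for the last sentence, I would observe that conditions (1) and (3) together cannot be met in dimension one: a curve $X_j$ with $h^0(T_{X_j})=0$ has genus $\ge 2$, whence $q(X_j)=g\ge 2\ne 0$, contradicting (2); hence necessarily $\dim X_j\ge 2$.
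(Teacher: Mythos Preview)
Your proposal is correct and follows essentially the same approach as the paper: decompose $T_X\otimes\Omega^1_X$ into the four summands coming from $T_X=p_1^*T_{X_1}\oplus p_2^*T_{X_2}$ and $\Omega^1_X=p_1^*\Omega^1_{X_1}\oplus p_2^*\Omega^1_{X_2}$, then apply K\"unneth and compare with $q(X)$. The paper merely states that ``a direct computation'' yields the equivalence; you have spelled out that computation, including the two observations that make the bookkeeping work, namely $h^0(X_j,T_{X_j}\otimes\Omega^1_{X_j})\ge 1$ (forcing $q(X_{j'})=0$) and $h^1(X_j,\Omega^1_{X_j})\ge 1$ for projective $X_j$ (forcing $h^0(X_{j'},T_{X_{j'}})=0$). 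One minor slip: in the final sentence you write ``conditions (1) and (3)'' but your argument actually uses (1) and (2), which is what is needed.
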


\begin{proof}  Let $p_j: X \to X_j$ denote the projections. Then 
$$ T_X \otimes \Omega^1_X \simeq \big( p_1^*(T_{X_1} \otimes \Omega^1_{X_1}) \big)  \oplus 
\big(p_1^*T_{X_1} \otimes p_2^*\Omega^1_{X_2} \big)  \ \oplus $$
$$ \oplus \ 
\big(p_2^*T_{X_2 } \otimes p_1^*\Omega^1_{X_1} \big) \oplus 
\big(p_2^*(T_{X_2} \otimes \Omega^1_{X_2}) \big). $$
Using the K\"unneth formula, a direct computation shows that the conditions (1),(2) and (3) are equivalent to $h^1(X, T_X \otimes \Omega^1_X) = q(X)$, i.e., 
$h^1(X, \sEnd_0(T_X)) = 0$. In particular, $\dim X_j = 1$ is impossible. 

\end{proof} 

Inductively, we obtain

\begin{corollary} \label{product3}
Let $X = \Pi_{j=1}^m X_j$ be a projective manifold. 
Then $$H^1(X,\sEnd_0(T_X)) = 0 $$ if and only if the following holds for all $j$.

\begin{enumerate}
\item $H^0(X_j,T_{X_j}) = 0 $;
\item $q(X_j) = 0 $;
\item $ H^1(X_j,\sEnd_0(T_{X_j})) = 0$.
\end{enumerate}

\end{corollary}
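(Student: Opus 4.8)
The plan is to prove the statement by induction on the number of factors $m$, taking Proposition \ref{product1} as the base case $m = 2$. (For $m = 1$ the asserted equivalence fails, as $\mathbb P_n$ shows, so throughout we understand $X$ to be a genuine product, i.e.\ $m \geq 2$.) For the inductive step with $m \geq 3$, I would write $X = X_1 \times X'$, where $X' := \Pi_{j=2}^m X_j$ is a product of $m-1 \geq 2$ factors, so that the induction hypothesis (the corollary for $m-1$ factors) applies to $X'$.

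First I would apply Proposition \ref{product1} to the two-factor product $X_1 \times X'$. This reduces the vanishing $H^1(X,\sEnd_0(T_X)) = 0$ to the two groups of conditions
$$ H^0(X_1,T_{X_1}) = 0,\quad q(X_1) = 0,\quad H^1(X_1,T_{X_1}\otimes\Omega^1_{X_1}) = H^1(X_1,\sEnd_0(T_{X_1})) = 0 $$
together with the same conditions with $X_1$ replaced by $X'$. Here I would invoke the canonical splitting $T_{X_1}\otimes\Omega^1_{X_1} \simeq \sEnd_0(T_{X_1}) \oplus \sO_{X_1}$: once $q(X_1) = h^1(X_1,\sO_{X_1}) = 0$, the two requirements $H^1(X_1,T_{X_1}\otimes\Omega^1_{X_1}) = 0$ and $H^1(X_1,\sEnd_0(T_{X_1})) = 0$ coincide. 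Thus the conditions on $X_1$ produced by Proposition \ref{product1} are exactly conditions (1)--(3) of the corollary for $j = 1$.

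It then remains to translate the conditions on $X'$ into conditions on the individual factors $X_2,\dots,X_m$. Writing $T_{X'} = \bigoplus_{j\ge 2} p_j^*T_{X_j}$ and using the Künneth formula together with $h^0(X_k,\sO_{X_k}) = 1$, I obtain $H^0(X',T_{X'}) = \bigoplus_{j\ge 2} H^0(X_j,T_{X_j})$ and $q(X') = \sum_{j\ge 2} q(X_j)$; hence $H^0(X',T_{X'}) = 0$, respectively $q(X') = 0$, holds if and only if the corresponding vanishing holds for every factor $X_j$, $j \ge 2$. The induction hypothesis applied to $X'$ shows that $H^1(X',\sEnd_0(T_{X'})) = 0$ is equivalent to conditions (1)--(3) holding for all $j \ge 2$; and, exactly as for $X_1$, the remaining requirement $H^1(X',T_{X'}\otimes\Omega^1_{X'}) = 0$ is automatic once $q(X') = 0$. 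Assembling these equivalences, $H^1(X,\sEnd_0(T_X)) = 0$ is equivalent to conditions (1)--(3) for $j = 1$ together with the same for all $j \ge 2$, i.e.\ for all $j$, which is the claim.

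Since each individual step is a direct application of Proposition \ref{product1}, of the Künneth formula, or of the induction hypothesis, I do not expect a serious obstacle; the only point requiring genuine care is the bookkeeping that reconciles the two equivalent forms of condition (3)---with and without the $T\otimes\Omega^1$ summand---via the splitting $T_{X_j}\otimes\Omega^1_{X_j} \simeq \sEnd_0(T_{X_j}) \oplus \sO_{X_j}$, and the verification that each Künneth identification is genuinely an equivalence in both directions.
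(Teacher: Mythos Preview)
Your proposal is correct and is exactly the argument the paper has in mind: the paper's proof consists of the single word ``Inductively,'' and you have simply spelled out that induction on $m$ with Proposition \ref{product1} as both the base case and the inductive tool. Your observation that the statement should be read with $m \geq 2$ (since $\mathbb P_n$ violates condition (1) while having $H^1(\sEnd_0(T_{\mathbb P_n})) = 0$) is a useful clarification, and your bookkeeping reconciling $H^1(T_{X_j}\otimes\Omega^1_{X_j})$ with $H^1(\sEnd_0(T_{X_j}))$ via the splitting and the vanishing $q(X_j)=0$ is exactly what is needed.
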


\bibliographystyle{amsalpha}

\bibliography{biblio}

\end{document}